\newcommand \be {\begin{equation}} 
\newcommand \ee {\end{equation}} 
\newcommand{\RR}{\mathbb{R}}
\newcommand{\Bvec}{\mathbf{B}}
\newcommand{\gvec}{\mathbf{g}}
\newcommand{\hvec}{\mathbf{h}}
\newcommand{\xvec}{\mathbf{x}}
\newcommand{\phivec}{\boldsymbol{\phi}}
\newcommand{\uvec}{\mathbf{u}}
\newcommand{\vvec}{\mathbf{v}}
\newcommand{\normal}{\mathbf{n}}
\newcommand{\Div}{\nabla \cdot}
\newcommand{\Grad}{\nabla}
\newcommand{\Lp}{L^p(\Omega)}
\newcommand{\Yp}{Y^p(\partial \Omega)}
\newcommand{\Wcurl}{W^{p}(\text{curl};\Omega)}
\newcommand{\Wimp}{W^{p}_{\text{imp}}(\text{curl};\Omega)}
\newcommand{\Wdiv}{W^{p}(\text{div};\Omega)}
\newcommand{\Wcurli}{W^{p}(\text{\emph{curl}};\Omega)}
\newcommand{\Wdivi}{W^{p}(\text{\emph{div}};\Omega)}
\newcommand{\Sobolev}[2]{W^{{#1},{#2}}(\Omega)}
\newcommand{\Besov}[3]{B^{{#1}}_{{#2},{#3}}(\partial  \Omega)}
\newcommand{\curl}[1]{\nabla \times #1} 
\title{The $p$-CurlCurl : Spaces, traces, coercivity and a Helmholtz decomposition in $L^p$}
\author{ 
     M{\scriptsize arc} L{\scriptsize aforest}\thanks{\'Ecole Polytechnique de Montr\'eal,
     D\'epartement de math\'ematiques et g\'enie industriel, C.P. 6079 succ. Centre-ville, 
	Montr\'eal, Qu\'ebec, Canada, H3C 3A7 ({\tt Marc.Laforest@polymtl.ca}).}
 }
\begin{document}

\maketitle


\begin{abstract}
This work provides the foundation for the finite element analysis of an elliptic problem which is the rotational analogue of the $p$-Laplacian and
which appears as a model of the magnetic induction in a high-temperature superconductor operating near it's critical current.
Whereas the function theory for the $p$-Laplacian requires standard results in $L^p$ Sobolev spaces, this problem requires an extension to $L^p$ spaces of
the well-known $L^2$ theory for divergence free vector fields, as used in the finite element method applied to incompressible flows and electromagnetic radiation.
Among other things, the analysis  requires extensions to $L^p$ of the well-known 
$H(\operatorname{div}; \Omega)$ and $H(\operatorname{curl};\Omega)$, extensions of traces and Green's theorem, 
a Helmholtz decomposition and finally a Friedrich's inequality. In this paper, we provide a proof of the existence and uniqueness
of weak solutions of our so-called $p$-CurlCurl problem. In a companion paper,
the analysis is extended to treat continuous and finite element solutions of the 
nonlinear parabolic problem whose spatial term is the $p$-CurlCurl operator.
Many of the results presented here are either already known, known in slightly different forms or are proven with the help of techniques 
that are already well-known. 
The main novelty of this paper appears to be the structured form of this $L^p$ theory and
our form of the Helmholtz decomposition and of the  Friedrich's inequality. 
In this respect, we note that some of these results can be found in the works of M. Dauge, M. Mitrea, I. Mitrea and ...
\end{abstract}

\begin{keywords} 
 $L^p$ theory, $p$-Laplacian, Helmholtz decomposition, Hodge decomposition, nonlinear, degenerate diffusion, magnetic resistivity,
  weak solution, rotational,  divergence free, superconductor, electromagnetism.
\end{keywords}

\begin{AMS}
78M10; 35J60; 35J92; 35J50. 
\end{AMS}

\pagestyle{myheadings}
\thispagestyle{plain}
\markboth{M. Laforest}{The $p$-CurlCurl : Function theory}


\section{Introduction}
\label{sec:intro}

The objective of this paper is to study the existence and uniqueness of weak
solutions of a nonlinear elliptic problem from applied superconductivity \cite{BraGriMar07,Sirois}. With respect to the magnetic flux 
$\Bvec: \Omega \to \RR^3,$ defined over a bounded domain $\Omega$, the stationnary problem takes the form
\begin{align}  
     \curl \Big( \big| \curl \Bvec \big|^{p-2}  \curl \Bvec  \Big) & = \mathbf{S}, \text{ over $\Omega$,}  \label{p-curl}  \\
      \Div \Bvec & = 0, \text{ over $\Omega$,}                                                                                                          \label{divergence_free}\\  
      \normal \times \Bvec & = 0, \text{ over $ \partial \Omega$,}                                             \label{Dirichlet_BC} 
\end{align}
where $p\in (1,\infty)$, $\normal$ is the outwards normal along the boundary $\partial \Omega$, $\mathbf{S}$ is given and satisfies some additional
conditions, and the boundary $\partial \Omega $ is sufficiently smooth. 
It is quite obvious that this problem is closely related to the $p$-Laplacian whose highest order term
is of the form $\Div ( | \Grad \mathbf{u} |^{p-2} \Grad \mathbf{u} ) $ and for which a well-developed theory already exists 
\cite{GloMar75,Wei92,BarLiu93,DiB93}.  For this reason, we have taken the liberty of naming this problem the elliptic $p$-CurlCurl.

In this paper, we tackle problem \eqref{p-curl}-\eqref{Dirichlet_BC} by constructing extensions of standard variational techniques from 
electromagnetics \cite{Mon03}. Roughly speaking, these extensions take the form of generalizations of results from $L^2$ to $L^p$
where $p \in (1,\infty) $ but in practice very large. Although many of the results 
described in this paper are scattered throughout the literature,  we have found it useful 
to write them in a self-contained form for future reference. For the engineering community which might not have access to the
mathematics literature, we have endeavoured to provide detailed
references and attributions to the results presented, while still providing a complete account of those results.
One reason why such a theory did not already exist is probably that a problem requiring it, like the $p$-CurlCurl, had yet to come
to the attention of the applied mathematics community.

The main contributions of this paper, besides the existence and uniqueness result for the elliptic $p$-CurlCurl problem, are the forms
of the Helmholtz decomposition, Theorem \ref{thm:Helmholtz}, and Friedrich's inequality, Theorem \ref{Friedrich_inequality2}. 
Ne\v{c}as \cite{Nec67}, Grisvard \cite{Gri85} and Taylor \cite{Tay81} have already given detailed presentations 
of elliptic regularity results in $L^p$, in the spirit of the groundbreaking work for smooth domains 
of Agmon, Douglis and Nirenberg \cite{AgmDouNir59}. The earliest
references to the $L^p$ analogues of $H(\text{div})$ and $H(\text{curl})$ we have found were those in Mitrea \cite{MitMit02,Mit04,Mit04}
but  they have also appeared implicitly in the work of Dauge \cite{Dau88,Dau92}. For smooth domains, the Helmholtz decomposition
in $W^{s,p}$ can be found in \cite{CanMat85} while for $C^1$ domains and Besov spaces, the question has recently 
been studied in \cite{FujYam07}.
Our Helmholtz decomposition is slightly different then those in either reference, mostly because the decomposition is not done over
$L^p$ but over the $L^p$ analogue of $H(\text{div})$. We were unable to find a published account 
of Friedrich's inequality in $L^p$ even though in theory, it very closely related to the Helmholtz decomposition.

The theory developed in this paper is presented under the strong assumption that the domain $\Omega$ be bounded with a $C^1$ boundary.
The boundedness of the domain is only present to simplify the proofs while the constraint on the boundary is impossible to overcome for the
large values of $p$, between $5$ and $100$ \cite{Sirois}, which one typically encounters in the engineering problems underlying the $p$-CurlCurl.
This strong assumption requires some explanation since it excludes the type of domain which one would typically encounter in finite element
discretizations. Going back to  counter-examples of Dahlberg \cite{Dah79} and the work of Jerison and Kenig on the Poisson
problem \cite{JerKen81,JerKen95}, it is known that  singularities in the smoothness of the boundary impose restrictions on the existence theory in $L^p$ spaces. 
This work has very recently been extended to general mixed boundary conditions for elliptic boundary value problems 
by Mitrea, ... \cite{Mit} where they show there exists
a neighborhood of $(2,2)$ for the values $(s,p)$, where $s$ the regularity being $s$ and $p$ the summability, for which the problem is 
well-posed; see Section \ref{sec:spaces}. Mention that Mitrea uses different techniques.
As such, the constraint imposed on the smoothness on the boundary is a fundamental obstacle which will require the engineering community to 
improve their model for the resistivity. Interesting work in that direction is being done by \cite{Sirois}.

For those interested in the modeling assumptions underlying the use of the $p$-CurlCurl in applied superconductivity, we refer the
reader to either Section 2 of the companion paper \cite{Laf10a} or to ...
Mention analytic solutions available in the literature for the time-dependent problem.

This paper is organized along the lines of Chapter 3 and 4 of Monk's recent monograph \cite{Mon03} treating finite element methods in electromagnetics,
which itself borrowed heavily from \cite{AmrBerDauGir98} and \cite{GirRav86}.  Section \ref{sec:regularity} reviews 
definitions of Sobolev and Besov spaces and 
presents precise statements concerning a few basic elliptic problems with boundary conditions in Besov spaces. Section \ref{sec:spaces}
introduces the $L^p$ analogues of $H(\text{div};\Omega)$ and $H(\text{curl};\Omega)$ and proves Green's formulas for them. The fourth section
construct scalar and vector potentials in $W^{1,p}$ for vector fields that are respectively curl and divergence free. The fifth section deals with 
Friedrich's inequality and the Helmholtz decomposition. Section \ref{sec:applications} proves
that there exists a weak solution to the $p$-CurlCurl problem.

\section{Elliptic regularity results}
\label{sec:regularity}

This section presents a summary of some necessary existence results for elliptic boundary value problems, in particular 
for the Poisson problem. We also include a description of Sobolev and Besov spaces and a list of their main properties.
With the exception of Theorem \ref{thm:Crouzeix}, the results in this section are known and presented without proof. We refer the reader to
the original paper of Agmon et al. \cite{AgmDouNir59}, the monograph of Ne\v{c}as for $W^{k,p}$ with $k$ integral \cite{Nec67} and
Grisvard \cite{Gri85} or Taylor \cite{Tay81} for $W^{s,p}$ with $s$ real but greater than $2$. Although Jerison and Kenig \cite{JerKen81,JerKen95} 
have focused on Lipschitz domains, their results are presented in forms that are closer to those which we have presented below and therefore
represent a good reference.
The results are presented for domains  with $C^1$
boundaries and will later briefly comment on the nature of the restrictions on $p$ which one would encounter
if more general domains were considered. For people familiar with elliptic problems, this section can be safely skipped.

Consider an open and bounded domain $\Omega \subset \RR^n$ whose boundary $\partial \Omega$ possesses Lipschitz regularity; see
Definition \ref{defn:Lipschitz_boundary}. For $1 \leq p < \infty$, recall the definition of the $L^p$-norm
$$
    \| u \|_{L^p(\Omega)} := \left\{ \int_{\Omega} |u|^p \, dx \right\}^{1/p}
$$
and the Banach space of functions over $\Omega$,
$    L^p(\Omega) := \{ u \text{ measurable} \, | \,  \| u \|_p < \infty \} $. 
Using the $L^p$-norm to control regularity, for each multi-index  
 $\boldsymbol{\alpha} := (\alpha_1, \alpha_2, \ldots,  \alpha_n) \in \mathbb{N}^n$,  we define
$|\boldsymbol{\alpha}| = \alpha_1+\alpha_2 + \cdots + \alpha_n$ and a semi-norm
$$
    | u |_{\Sobolev{k}{p}} := \left\{   \sum_{\boldsymbol{\alpha} : |\boldsymbol{\alpha}| = k} 
           \left| \frac{ \partial^k u}{\partial^{\alpha_1} x_1 \cdots \partial^{\alpha_n} x_n } \right|_p^p  \right\}^{1/p} .
$$
We can thus construct a norm by using all semi-norms for derivatives  less than or equal to a fixed order, namely
$$
    \| u \|_{\Sobolev{k}{p}} := \left\{  \sum_{i=0}^k   | u |_{i,p}^p   \right\}^{1/p} ,
$$ 
and thus, by completion with respect to smooth functions over $\Omega$, the so-called Sobolev space
\begin{align}    \label{defn:Sobolev}
  \Sobolev{k}{p} = \Big\{  u \text{ measurable} \Big|  \| u \|_{k,p} < \infty  \Big\} .
\end{align}
The spaces \eqref{defn:Sobolev} are Banach spaces and when $p=2$, they are Hilbert spaces. 

To extend these spaces to all real positive real values of $s$, we define the norm 
$$
   \|  u \|_{\Sobolev{s}{p}} :=  \left\{ \| u \|_{\Sobolev{m}{p}}^p + 
         \sum_{|\boldsymbol{\alpha}| = m} \int_{\Omega} \int_{\Omega} 
             \frac{ \partial^{\boldsymbol{\alpha}} u(\xvec) - \partial^{\boldsymbol{\alpha}} u(\mathbf{y}) }{|\xvec - \mathbf{y}|^{n+\sigma p}}  
                                                         \, d\xvec d\mathbf{y} \right\}^{1/p} ,
$$
where $m$ is the positive integer and $\sigma\in [0,1)$ the real number satisfying $s = m+\sigma$. In \cite{Nec67}, it is shown
that the completition of smooth functions $C^{\infty}(\overline{\Omega})$ with respect to
the norm $\| \cdot \|_{\Sobolev{s}{p}}$, $1<p<\infty$, gives rise to a reflexive Banach space.


\begin{definition} \label{defn:Lipschitz_boundary}
We say that a bounded domain $\Omega$ is Lipschitz if at every point $\xvec \in \partial \Omega$, there exits a neighborhood $V \subset \RR^n$ 
of $\xvec$ such that $\partial \Omega \cap V$ can be described as the graph of a Lipschitz function. 
Similarly, we say that the domain is $C^m$ is it's boundary can be represented locally as the graph of
a  function which is $m$ times differentiable with continuous $m$-th order derivatives.
\end{definition}

It is a non-trivial result that for Lipschitz domains, $C^{\infty}(\overline{\Omega})$ is dense in $\Sobolev{k}{p}$ for $k \in \mathbb{N}$,
while this follows by construction when $s \in \mathbb{R}^+ \setminus \mathbb{N}$.
When $s$ is a non-negative real number and $1 \leq p < \infty$, it is well-known that when the boundary of the domain is Lipschitz, then the set of restrictions
to $\Omega$ of functions in $C^{\infty}_0(\RR^n)$ is dense in $\Sobolev{s}{p}$; Theorem 3.22 of \cite{Ada03}. This suggests that we define the spaces 
$W^{s,p}_0(\Omega)$ to be the closure of the set $C_0^{\infty}(\Omega)$ in the norm of $\Sobolev{s}{p}$. For any normed topological vector space $V$, we define
its dual $V'$ to be the set of continuous linear functionals $\ell : V \longrightarrow \RR$, which is also a normed topological vector space
with respect to the induced norm
\begin{equation}  \label{dual_norm}
                \| \ell \|_{V'} := \sup_{v \in V : \|v \|_V \leq 1} \big| \ell(v) \big| .
\end{equation}
When $1 < p < \infty$, then we define the conjugate exponent to $p$ to be $q$ satisfying $1 = 1/p + 1/q$. 
The Riesz representation theorem states that $L^p(\Omega)' = L^q(\Omega)$. When $s$ is  negative, we define
$\Sobolev{s}{p} \equiv W^{-s,q}_0(\Omega)'$ becomes a Banach spaces; see \cite{Ada03,Nec67} 
for more information on those spaces. 
Below, we will use the symbols $( u, v) $ to denote $\int u v \, dx$, where the 
subscript $\Omega$ will be included if the space over which the integration occurs is not obvious.  
The bracket notation  $  \langle \cdot, \cdot \rangle : V \times V' \longrightarrow \mathbb{R}$  will be reserved
for pairings between  $V$ and its dual $V'$.

One of the facts that makes handling Sobolev spaces difficult in the $L^p$ setting is that their traces do not belong to the same family of
spaces. This requires us to introduce the so-called Besov spaces $\Besov{s}{p}{p'}$ where $s$ is a measure of the regularity and $p$ and
$p'$ are two exponents in $[1, \infty)$ which might or might not be related.  
We will never need the explicit definition of these spaces, only the existence of these spaces and 
the existence of continuous maps to these spaces. The difficult definition of these spaces is therefore omitted and we refer
the reader to Chapter 7 of \cite{Ada03} or Treibel \cite{Treibel}. The fundamental properties of these fractional order spaces are the same as those of
Sobolev spaces, namely completeness and density 
of the subset of smooth functions. Moreover, if $B^{s}_{p,p;0}(\Omega)$ is the closure of $C^{\infty}_0(\Omega)$ in the
Besov space $B^s_{p,p}(\Omega)$, then the negative norm Besov spaces will be defined by duality as
$B^{s}_{p,p}(\Omega) := (B^{-s}_{q,q;0}(\Omega))'$, where $p$ and $q$ are conjugate to each other. 
We now include two fundamental results for these spaces.

\begin{lemma}[Lemma 7.41 of \cite{Ada03}]  \label{lemma:trace_basic}
Assume that $\Omega$ is a bounded Lipschitz domain and $1 < p < \infty$.
Then the restriction operator $\gamma_0 : C^{\infty}(\overline{\Omega}) \longrightarrow C^{\infty}(\partial \Omega)$ extends continuously
to a surjective map $\gamma_0: \Sobolev{1}{p} \longrightarrow \Besov{1-1/p}{p}{p}$.
\end{lemma}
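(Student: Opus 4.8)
Since $C^{\infty}(\overline{\Omega})$ is dense in $W^{1,p}(\Omega)$ for a Lipschitz domain, it suffices to establish the a priori bound $\|\gamma_0 u\|_{B^{1-1/p}_{p,p}(\partial\Omega)} \le C\|u\|_{W^{1,p}(\Omega)}$ for $u\in C^{\infty}(\overline{\Omega})$, after which $\gamma_0$ extends to $W^{1,p}(\Omega)$ by continuity; surjectivity is then obtained by exhibiting a bounded linear right inverse. The plan is first to localize: fix a finite atlas of boundary charts with subordinate partition of unity $\{\psi_j\}$, so that each $\psi_j u$, after the Lipschitz change of variables flattening $\partial\Omega\cap V_j$, is supported in a fixed ball of the half-space $\RR^n_+=\{x_n>0\}$. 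One checks that this change of variables preserves $W^{1,p}$ and, on the boundary, carries the intrinsic Besov norm on $\partial\Omega$ to an equivalent one on $\RR^{n-1}$; it is then enough to treat $v\in C^{\infty}_0(\overline{\RR^n_+})$ with trace $g(x'):=v(x',0)$ on $\RR^{n-1}$.

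For the half-space estimate I would use the Gagliardo double-integral description of $B^{1-1/p}_{p,p}(\RR^{n-1})$, i.e.\ that its norm is equivalent to $\|g\|_{L^p(\RR^{n-1})}$ plus the seminorm $\big(\iint |g(x')-g(y')|^p |x'-y'|^{-(n+p-2)}\,dx'\,dy'\big)^{1/p}$ (note $(1-1/p)p=p-1$, so the weight exponent is $(n-1)+(1-1/p)p=n+p-2$). The $L^p$ bound follows from the fundamental theorem of calculus in $x_n$, $|g(x')|^p\le p\int_0^{\infty} |v|^{p-1}|\partial_n v|\,dx_n$, followed by Hölder in $x_n$ and integration in $x'$. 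For the seminorm, given $x'\ne y'$ set $r=|x'-y'|$ and write $g(x')-g(y') = \big(g(x')-v(x',r)\big) + \big(v(x',r)-v(y',r)\big) + \big(v(y',r)-g(y')\big)$; bound the two vertical terms by $\int_0^r|\partial_n v|\,dx_n$ and the horizontal term by $\int_0^1 r\,|\nabla' v(x'+\tau(y'-x'),r)|\,d\tau$, raise to the $p$-th power, multiply by $r^{-(n+p-2)}$, integrate in $(x',y')$, and use Hölder and Fubini to collapse each piece into $C\|\nabla v\|_{L^p(\RR^n_+)}^p$.

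Surjectivity I would get by constructing the extension directly. Fix $\phi\in C^{\infty}_0(\RR^{n-1})$ with $\int\phi=1$ and put $Eg(x',t) := \int_{\RR^{n-1}}\phi_t(x'-y')\,g(y')\,dy'$ with $\phi_t(z)=t^{1-n}\phi(z/t)$. Then $Eg(\cdot,t)\to g$ in $L^p$ as $t\to0^+$, so $\gamma_0(Eg)=g$; and because $\int\phi_t\,dz=1$ for every $t$, one may subtract the constant $g(x')$ inside the integrals defining $\partial_t Eg$ and $\nabla' Eg$, which reduces the bound $\|Eg\|_{W^{1,p}(\RR^n_+)}\le C\|g\|_{B^{1-1/p}_{p,p}}$ to exactly the Gagliardo seminorm treated above. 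Patching the local extensions with the partition of unity, together with a bounded retraction between $B^{1-1/p}_{p,p}(\RR^{n-1})$ and $B^{1-1/p}_{p,p}(\partial\Omega)$, yields a bounded right inverse of $\gamma_0$, hence surjectivity.

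The main obstacle is bookkeeping rather than conceptual: (i) verifying that the Lipschitz flattening maps preserve, up to equivalent norms, both $W^{1,p}(\Omega)$ and the intrinsic Besov norm on the boundary — the charts are only a.e.\ differentiable, so one argues with difference quotients rather than a chain rule; and (ii) splitting the double integral in the seminorm estimate into the regions $r\lesssim 1$ and $r\gtrsim 1$ and checking that every Hölder exponent and every use of Fubini is legitimate. A slicker alternative, which I would mention, is to bypass the explicit computation by invoking the Lions--Peetre vector-valued trace theorem: one identifies $W^{1,p}(\RR^n_+)$ with $L^p(\RR_+;W^{1,p}(\RR^{n-1}))\cap W^{1,p}(\RR_+;L^p(\RR^{n-1}))$ and $B^{1-1/p}_{p,p}(\RR^{n-1})$ with the real-interpolation space $(W^{1,p}(\RR^{n-1}),L^p(\RR^{n-1}))_{1/p,p}$, whereupon both the trace bound and the bounded lifting follow from the abstract trace theorem; this is essentially the route of \cite{Ada03}.
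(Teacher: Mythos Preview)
The paper does not supply its own proof of this lemma: it is stated as Lemma~7.41 of \cite{Ada03} and, along with the other results of Section~\ref{sec:regularity} apart from Theorem~\ref{thm:Crouzeix}, is explicitly ``presented without proof.'' So there is nothing in the paper to compare against beyond the citation.

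That said, your proposal is correct and follows the classical route one finds in standard references. The localization-and-flattening step, the direct Gagliardo estimate for the half-space trace (your exponent $n+p-2$ is right), and the mollifier-based lifting $Eg(x',t)=\phi_t\ast g(x')$ are exactly the ingredients of the traditional proof; your remark that the cancellation $\int\phi_t=1$ is what makes the $W^{1,p}$ bound on $Eg$ reduce to the Besov seminorm is the key observation. The alternative via Lions--Peetre real interpolation that you mention at the end is indeed closer in spirit to how \cite{Ada03} organizes the argument in Chapter~7, where Besov spaces are introduced as trace spaces via the $J$- and $K$-methods and the trace theorem then falls out of the abstract vector-valued trace result. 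Either route is acceptable; the explicit Gagliardo computation is more self-contained, while the interpolation argument is cleaner once the machinery is in place. Your caveats about the Lipschitz change of variables (a.e.\ differentiability, working with difference quotients) are the genuine technical points and you have identified them correctly.
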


\begin{lemma}[Poincar\'e's inequality, Remark 7.5 of \cite{Nec67}]  \label{lemma:Poincare}
Assume that $\Omega$ is a bounded Lipschitz domain and $1<p<\infty$.
Then there exists a constant $C = C(\Omega)$ such that 
$$
     \| u \|_{\Sobolev{1}{p}} \leq C \left\{ \| \nabla u \|_{\Lp}^p  + \left| \int_{\Omega} u \, d\xvec \right|^p \right\}^{1/p}  .
$$
\end{lemma}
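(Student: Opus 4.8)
The plan is to argue by contradiction via a compactness argument, the classical route to Poincar\'e--Wirtinger type inequalities. Suppose no such constant $C$ exists. Then for each $n \in \mathbb{N}$ there is $u_n \in \Sobolev{1}{p}$ which, after rescaling, satisfies $\| u_n \|_{\Sobolev{1}{p}} = 1$ while
\[
    \| \Grad u_n \|_{\Lp}^p + \left| \int_{\Omega} u_n \, d\xvec \right|^p \longrightarrow 0 \qquad \text{as } n \to \infty .
\]
In particular $\Grad u_n \to 0$ in $\Lp$ and $\int_{\Omega} u_n \, d\xvec \to 0$. Since $1 < p < \infty$, the space $\Sobolev{1}{p}$ is reflexive, so the bounded sequence $\{u_n\}$ has a subsequence (not relabeled) converging weakly to some $u \in \Sobolev{1}{p}$.

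Next I would invoke compactness. Because $\Omega$ is a bounded Lipschitz domain, the Rellich--Kondrachov theorem gives a compact embedding $\Sobolev{1}{p} \hookrightarrow \Lp$, so after passing to a further subsequence $u_n \to u$ strongly in $\Lp$. The gradients converge weakly, $\Grad u_n \rightharpoonup \Grad u$ in $\Lp$, but they also converge strongly to $0$; hence $\Grad u = 0$ almost everywhere in $\Omega$. Assuming $\Omega$ is \emph{connected}, this forces $u$ to equal a constant $c$ almost everywhere. To pin down $c$, pass to the limit in the mean: by the strong $\Lp$ convergence $\int_{\Omega} u_n \, d\xvec \to \int_{\Omega} u \, d\xvec = c\,|\Omega|$, while the hypothesis forces this limit to be $0$; therefore $c=0$ and $u \equiv 0$. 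Combining $u_n \to 0$ in $\Lp$ with $\Grad u_n \to 0$ in $\Lp$ yields $u_n \to 0$ strongly in $\Sobolev{1}{p}$, contradicting $\| u_n \|_{\Sobolev{1}{p}} = 1$. This contradiction proves the claim.

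The main obstacle is the compact embedding $\Sobolev{1}{p} \hookrightarrow \Lp$: this is exactly where the Lipschitz regularity of $\partial \Omega$ enters, typically through a Sobolev extension operator $E : \Sobolev{1}{p} \to W^{1,p}(\RR^n)$ combined with the classical Rellich theorem on a ball, or through a direct covering-and-mollification argument. Everything else is soft functional analysis. One should also keep in mind the implicit hypothesis that $\Omega$ be connected: if $\Omega$ has several components, the single scalar constraint $\int_{\Omega} u \, d\xvec = 0$ cannot control the constants on each component and the inequality fails; for finitely many components it persists with $\int_{\Omega} u \, d\xvec$ replaced by the vector of component-wise averages, and the proof is otherwise unchanged.
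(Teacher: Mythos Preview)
Your argument is the standard contradiction-plus-compactness proof and is correct; the only substantive point you raise beyond the routine is the need for connectedness, which is indeed implicit in the statement. Note, however, that the paper does not actually prove this lemma: as announced at the beginning of Section~\ref{sec:regularity}, all results in that section except Theorem~\ref{thm:Crouzeix} are stated without proof, and Lemma~\ref{lemma:Poincare} is simply attributed to Ne\v{c}as~\cite{Nec67}. There is therefore nothing in the paper to compare your approach to; you have supplied a complete and standard proof where the paper gives only a citation.
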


We now present the fundamental well-posedness results we will be using. In the following theorems, we will be assuming that 
the domain $\Omega$ is bounded with a {\it smooth} $C^1$ boundary. 
Removing this constraint would require us to significantly reduce the 
range of the exponent which would be unacceptable, 
under the current modeling assumptions leading to the problem \eqref{p-curl}-\eqref{Dirichlet_BC}.
briefly addressing this issue at the end of the current section. The following results are parts of what is usually referred to as the shift theorem
for elliptic partial differential equations; see Theorem 5.4 \cite{Tay81}. In the form given below, these results are consequences of the 
work of Agmon, Douglis and Nirenberg \cite{AgmDouNir59} where they assumed throughout a smooth domain
and $L^p$-type boundary conditions. Below we give specific references, that are neither original nor optimal,
but have the benefit of being in precisely the same form as we have stated.

\begin{theorem} \label{thm:elliptic1}
Consider a bounded domain with a $C^{1}$ boundary, an exponent $1<p<\infty$. Then for any  $\mu \in \Besov{1-1/p}{p}{p}$ and
$f \in \Sobolev{-1}{p}$, there exists a unique weak solution $\phi \in \Sobolev{1}{p}$ of
$$
     - \Delta \phi + \phi = f, \quad \text{in $\Omega$} \qquad \text{and} \qquad \phi = \mu \quad \text{on $\partial \Omega$}.
$$
Moreover, there is a constant $C$ such that
\begin{equation}  \label{estimate1}
     \|  \phi \|_{\Sobolev{1}{p}} \leq C \Big( \| \mu \|_{\Besov{1-1/p}{p}{p}} + \| f \|_{\Sobolev{-1}{p}}  \Big) .
\end{equation}
\end{theorem}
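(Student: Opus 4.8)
The plan is to use the classical two-step reduction for linear elliptic boundary value problems: first lift the boundary data to obtain a homogeneous Dirichlet problem, then prove that the shifted Laplacian is an isomorphism at the $W^{1,p}$ level, and finally recombine the estimates.

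\emph{Step 1: reduction to homogeneous boundary conditions.} By Lemma~\ref{lemma:trace_basic} the trace map $\gamma_0 : \Sobolev{1}{p} \to \Besov{1-1/p}{p}{p}$ is bounded and surjective, hence by the open mapping theorem it admits a bounded linear right inverse. Choose $\Phi \in \Sobolev{1}{p}$ with $\gamma_0\Phi = \mu$ and $\|\Phi\|_{\Sobolev{1}{p}} \le C\|\mu\|_{\Besov{1-1/p}{p}{p}}$, and write $\phi = \Phi + \psi$. Then $\phi$ solves the problem precisely when $\psi \in W^{1,p}_0(\Omega)$ satisfies $-\Delta\psi + \psi = g$ in the sense of distributions, where $g := f + \Delta\Phi - \Phi \in \Sobolev{-1}{p}$ and $\|g\|_{\Sobolev{-1}{p}} \le C\big(\|f\|_{\Sobolev{-1}{p}} + \|\mu\|_{\Besov{1-1/p}{p}{p}}\big)$. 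Thus it suffices to show that $\mathcal{L} := -\Delta + I$ is a topological isomorphism from $W^{1,p}_0(\Omega)$ onto $\Sobolev{-1}{p}$, since \eqref{estimate1} then follows by composing with the bound on $\Phi$.

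\emph{Step 2: the a priori estimate.} The operator $\mathcal{L}$ is obviously bounded; the content is the estimate $\|\psi\|_{\Sobolev{1}{p}} \le C\,\|\mathcal{L}\psi\|_{\Sobolev{-1}{p}}$ for all $\psi\in W^{1,p}_0(\Omega)$. I would prove this by localization: cover $\overline\Omega$ by finitely many balls, interior and boundary, with a subordinate smooth partition of unity. On interior patches the bound reduces to the whole-space estimate for $-\Delta+I$, which follows from the Mikhlin--Hörmander multiplier theorem applied to the symbols $\xi_j\xi_k(|\xi|^2+1)^{-1}$ and $(|\xi|^2+1)^{-1}$ (the $+I$ keeps the symbol bounded away from zero, so no low-frequency subtlety arises). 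On a boundary patch, flatten $\partial\Omega$ with a $C^1$ chart; the Laplacian becomes a second-order operator with continuous leading coefficients coinciding at the base point with a constant-coefficient operator, and the Dirichlet condition becomes a condition on a hyperplane. Freezing coefficients reduces this to a constant-coefficient half-space problem (handled by Fourier multipliers together with odd reflection for the Dirichlet boundary), while the difference between the frozen and the genuine operator is a perturbation whose $W^{1,p}_0 \to W^{-1,p}$ norm is controlled by the oscillation of the coefficients, hence can be made small on sufficiently small patches thanks to the $C^1$ regularity. Summing over the finite cover and absorbing the small terms, together with the compact embedding $W^{1,p}_0(\Omega)\hookrightarrow\Lp$ to mop up lower-order contributions, gives the global estimate. (Alternatively one could run the method of continuity from the $L^2$ case, but the uniform bound along the path reduces to the same localization argument.)

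\emph{Step 3: passing to the isomorphism.} The a priori estimate shows $\mathcal{L}$ is injective with closed range. Since $-\Delta+I$ is formally self-adjoint, the adjoint of $\mathcal{L}:W^{1,p}_0\to\Sobolev{-1}{p}=(W^{1,q}_0(\Omega))'$ is again $\mathcal{L}:W^{1,q}_0\to W^{-1,q}(\Omega)$ with $q$ conjugate to $p$; applying Step 2 with $q$ in place of $p$ shows this adjoint is injective, so its annihilator is trivial and $\mathcal{L}\big(W^{1,p}_0(\Omega)\big)$ is dense in $\Sobolev{-1}{p}$. A closed and dense range is everything, so $\mathcal{L}$ is a bounded bijection and hence an isomorphism; unwinding Step 1 yields the unique $\phi$ and the estimate \eqref{estimate1}. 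The one genuinely delicate point is the boundary a priori estimate on a merely $C^1$ domain: with a $C^{1,\alpha}$ or $C^2$ boundary the coefficient-freezing perturbation is routine Schauder-type localization on top of the $L^p$ Calderón--Zygmund theory, but for $C^1$ boundaries there is no Hölder exponent to spend, and one must instead invoke the continuity (not Hölder continuity) of the unit normal together with the boundedness of singular integrals on $C^1$ surfaces --- this is exactly where the layer-potential analysis of Fabes--Jodeit--Rivière and the refinements in the works of Mitrea et al.\ cited in the introduction are needed, and it is also why the full range $1<p<\infty$ persists for $C^1$ domains but not for general Lipschitz ones.
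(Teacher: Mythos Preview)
The paper does not prove this theorem: Section~\ref{sec:regularity} explicitly states that ``with the exception of Theorem~\ref{thm:Crouzeix}, the results in this section are known and presented without proof,'' and simply attributes Theorem~\ref{thm:elliptic1} to the Agmon--Douglis--Nirenberg circle of results and the later refinements cited there. So there is no proof in the paper to compare against; the theorem is quoted as background.

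Your outline is the standard route and is essentially correct. Two small technical remarks. First, the open mapping theorem alone does not furnish a bounded \emph{linear} right inverse of a Banach-space surjection; what you actually need (and what exists here) is a bounded linear extension operator $E:\Besov{1-1/p}{p}{p}\to\Sobolev{1}{p}$ with $\gamma_0\circ E=\mathrm{id}$, which is a standard companion to Lemma~\ref{lemma:trace_basic}. Second, your Step~2 invokes compactness to ``mop up lower-order contributions,'' but that contradiction argument already presupposes injectivity of $\mathcal{L}$, which you only address in Step~3; the clean fix is to stop Step~2 at the G\aa rding-type estimate $\|\psi\|_{\Sobolev{1}{p}}\le C(\|\mathcal{L}\psi\|_{\Sobolev{-1}{p}}+\|\psi\|_{\Lp})$, deduce that $\mathcal{L}$ is semi-Fredholm, and then obtain injectivity separately (for $p\ge2$ directly from the $L^2$ pairing, for $p<2$ by bootstrapping kernel elements into $W^{1,2}_0(\Omega)$ or by an index-continuity argument from $p=2$). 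These are routine rearrangements; your identification of the $C^1$ boundary estimate as the only genuinely delicate point, and of the layer-potential machinery needed there, is exactly right.
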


\begin{theorem}[Theorem 6.1, \cite{JerKen95}] \label{thm:elliptic2}
Consider a bounded domain with a $C^{1}$ boundary, an exponent $1<p<\infty$. Then for any $\mu \in  \Besov{1-1/p}{p}{p}$ and
$f \in \Sobolev{-1}{p}$, there exists a unique weak solution $\phi \in \Sobolev{1}{p}$ of
$$
     - \Delta \phi  = f, \quad \text{in $\Omega$} \qquad \text{and} \qquad \phi = \mu \quad \text{on $\partial \Omega$}.
$$
Moreover, there is a constant $C$ such that
\begin{equation}  \label{estimate2}
     \|  \phi \|_{\Sobolev{1}{p}} \leq C \Big( \| \mu \|_{ \Besov{1-1/p}{p}{p}} + \| f \|_{\Sobolev{-1}{p}}  \Big) .
\end{equation}
\end{theorem}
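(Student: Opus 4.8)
The plan is to deduce this from the already-quoted well-posedness of the shifted problem $-\Delta + I$ (Theorem \ref{thm:elliptic1}) by lifting the boundary data and then running a Fredholm alternative, the compactness being supplied by the Rellich--Kondrachov embedding over the bounded set $\Omega$.

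\smallskip
\emph{Step 1: reduction to homogeneous boundary data.} By Lemma \ref{lemma:trace_basic}, $\gamma_0 : \Sobolev{1}{p} \to \Besov{1-1/p}{p}{p}$ is bounded and surjective, hence open (open mapping theorem, both spaces being Banach), so there is $\Phi_\mu \in \Sobolev{1}{p}$ with $\gamma_0 \Phi_\mu = \mu$ and $\| \Phi_\mu \|_{\Sobolev{1}{p}} \le C \| \mu \|_{\Besov{1-1/p}{p}{p}}$. Seeking $\phi = \Phi_\mu + \psi$, the problem becomes $-\Delta \psi = g$ in $\Omega$ with $\gamma_0 \psi = 0$, where $g := f + \Delta \Phi_\mu \in \Sobolev{-1}{p}$ (indeed $\Grad \Phi_\mu \in (\Lp)^n$, so $\Delta \Phi_\mu \in \Sobolev{-1}{p}$ with $\| \Delta \Phi_\mu \|_{\Sobolev{-1}{p}} \le \| \Phi_\mu \|_{\Sobolev{1}{p}}$). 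Since $\ker \gamma_0 = W^{1,p}_0(\Omega)$ for a $C^1$ domain, it suffices to show that $-\Delta : W^{1,p}_0(\Omega) \to \Sobolev{-1}{p}$ is a Banach-space isomorphism: the open mapping theorem then gives $\| \psi \|_{\Sobolev{1}{p}} \le C \| g \|_{\Sobolev{-1}{p}}$, and combining the two bounds yields existence together with \eqref{estimate2}, while uniqueness is exactly injectivity of this operator.

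\smallskip
\emph{Step 2: Fredholm structure.} Specializing Theorem \ref{thm:elliptic1} to $\mu = 0$ shows that $T := -\Delta + I : W^{1,p}_0(\Omega) \to \Sobolev{-1}{p}$ is a bounded bijection, with $\| T^{-1} \| \le C$ by \eqref{estimate1}. Write $-\Delta = T - \iota = T\,( I - T^{-1} \iota )$, where $\iota : W^{1,p}_0(\Omega) \hookrightarrow \Sobolev{-1}{p}$ is the canonical inclusion, which factors as $W^{1,p}_0(\Omega) \hookrightarrow \Lp \hookrightarrow \Sobolev{-1}{p}$ with the first arrow compact (Rellich--Kondrachov, $\Omega$ bounded) and the second bounded. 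Hence $K := T^{-1} \iota$ is a compact operator on $W^{1,p}_0(\Omega)$, $I - K$ is Fredholm of index zero, and so is $-\Delta = T(I - K)$ since $T$ is an isomorphism. Therefore $-\Delta$ is an isomorphism the moment it is injective.

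\smallskip
\emph{Step 3: injectivity.} Let $\phi \in W^{1,p}_0(\Omega)$ with $\Delta \phi = 0$ in $\mathcal{D}'(\Omega)$. If $p \ge 2$, then $\phi \in W^{1,2}_0(\Omega)$ because $\Omega$ is bounded, so using $\phi$ as a test function gives $\int_{\Omega} |\Grad \phi|^2 \, d\xvec = 0$; thus $\phi$ is constant and, having vanishing trace, $\phi \equiv 0$. For $1 < p < 2$ this energy argument is not available, and here I would use duality: by reflexivity of the Sobolev spaces and the symmetry of the Dirichlet form $(u,v) \mapsto \int_{\Omega} \Grad u \cdot \Grad v \, d\xvec$, the Banach-space adjoint of $-\Delta : W^{1,p}_0(\Omega) \to \Sobolev{-1}{p} = (W^{1,q}_0(\Omega))'$ is $-\Delta : W^{1,q}_0(\Omega) \to \Sobolev{-1}{q}$, and $q > 2$; the latter operator is injective by the case just treated, and since $-\Delta$ has index zero, $\dim \ker\!\big(-\Delta : W^{1,p}_0(\Omega) \to \Sobolev{-1}{p}\big) = \dim \ker\!\big(-\Delta : W^{1,q}_0(\Omega) \to \Sobolev{-1}{q}\big) = 0$.

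\smallskip
The routine parts are Steps 1 and 2; the one that carries the actual content is the injectivity for $1 < p < 2$ in Step 3. Besides the duality/index bookkeeping used above, an alternative is to invoke interior elliptic regularity (a distributional solution of $\Delta \phi = 0$ is smooth in $\Omega$) together with the maximum principle to force $\phi \equiv 0$ from its vanishing trace; the duality route is preferable here since it is self-contained given Theorem \ref{thm:elliptic1}.
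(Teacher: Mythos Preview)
The paper does not give its own proof of this theorem: in Section~\ref{sec:regularity} it is listed among the elliptic regularity results that are ``known and presented without proof'', with the attribution to Theorem~6.1 of \cite{JerKen95}. So there is no argument in the paper to compare against; the statement is simply imported from the literature.

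Your derivation is correct and is a pleasant, self-contained alternative to citing \cite{JerKen95} directly. The mechanism is the standard one: once Theorem~\ref{thm:elliptic1} grants that $-\Delta+I:W^{1,p}_0(\Omega)\to\Sobolev{-1}{p}$ is an isomorphism, the identity $-\Delta=(-\Delta+I)-\iota$ with $\iota$ compact (Rellich--Kondrachov on the bounded $\Omega$) makes $-\Delta$ Fredholm of index zero, and the duality bookkeeping in Step~3 cleanly disposes of injectivity for $1<p<2$. The only point worth a remark is that in Step~1 you do not need a bounded \emph{linear} right inverse of $\gamma_0$; the open mapping theorem already gives, for each $\mu$, some $\Phi_\mu$ with $\|\Phi_\mu\|_{\Sobolev{1}{p}}\le C\|\mu\|_{\Besov{1-1/p}{p}{p}}$, which is all the estimate \eqref{estimate2} requires. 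Compared with quoting \cite{JerKen95}, your route is more elementary and stays entirely within the toolkit the paper has already assembled, at the price of taking Theorem~\ref{thm:elliptic1} for granted; the Jerison--Kenig reference, on the other hand, is the primary source and covers rougher (Lipschitz) boundaries, albeit with restrictions on $p$ that the $C^1$ hypothesis here removes.
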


\begin{theorem} \label{thm:elliptic4}
Consider a bounded domain with a $C^{1}$ boundary, unit outwards normal $\normal$, and an exponent $1<p<\infty$. 
Then for any $\mu \in \Besov{-1/p}{p}{p} $ 
and $f \in \Sobolev{1}{q}'$, there exists a unique weak solution $\phi \in \Sobolev{1}{p}$ of
$$
     - \Delta \phi + \phi = f, \quad \text{in $\Omega$} \qquad \text{and} \qquad \normal \cdot \nabla \phi = \mu \quad \text{on $\partial \Omega$}.
$$
Moreover, there is a constant $C$ such that
\begin{equation}  \label{estimate4}
     \|  \phi \|_{\Sobolev{1}{p}} \leq C \Big( \| \nu \|_{\Besov{-1/p}{p}{p}} + \| f \|_{\Sobolev{1}{q}'}  \Big) .
\end{equation}
\end{theorem}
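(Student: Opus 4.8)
The plan is to recast \eqref{estimate4} as a variational problem, exactly as for the Dirichlet problems above, and then to invoke the $L^p$ shift theorem for the one analytic ingredient. First I would introduce the symmetric bilinear form
\begin{equation*}
   a(\phi,v) := \int_\Omega \big( \nabla\phi\cdot\nabla v + \phi\,v \big)\, d\xvec ,
\end{equation*}
which by H\"older's inequality is bounded on $\Sobolev{1}{p}\times\Sobolev{1}{q}$, together with the functional $F(v) := \langle f,v\rangle + \langle \mu,\gamma_0 v\rangle_{\partial\Omega}$. Boundedness of $F$ on $\Sobolev{1}{q}$ is where Lemma \ref{lemma:trace_basic} enters: applied with exponent $q$ it gives a bounded trace $\gamma_0:\Sobolev{1}{q}\to\Besov{1-1/q}{q}{q}$, and since $1-1/q = 1/p$ and $\partial\Omega$ is a closed manifold, the dual of $\Besov{1/p}{q}{q}$ is $\Besov{-1/p}{p}{p}$; hence $|\langle\mu,\gamma_0 v\rangle| \le \|\mu\|_{\Besov{-1/p}{p}{p}}\,\|\gamma_0 v\|_{\Besov{1/p}{q}{q}} \le C\,\|\mu\|_{\Besov{-1/p}{p}{p}}\,\|v\|_{\Sobolev{1}{q}}$, and with $f\in\Sobolev{1}{q}'$ one gets $\|F\|_{\Sobolev{1}{q}'} \le C\big(\|\mu\|_{\Besov{-1/p}{p}{p}} + \|f\|_{\Sobolev{1}{q}'}\big)$. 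The weak problem is to find $\phi\in\Sobolev{1}{p}$ with $a(\phi,v) = F(v)$ for all $v\in\Sobolev{1}{q}$, and an integration by parts shows that a function solves it if and only if it is a weak solution of $-\Delta\phi+\phi = f$ with $\normal\cdot\nabla\phi = \mu$. Writing $A:\Sobolev{1}{p}\to\Sobolev{1}{q}'$, $\langle A\phi,v\rangle := a(\phi,v)$, the whole statement reduces to showing that $A$ is an isomorphism with $\|A^{-1}\|$ depending only on $\Omega$ and $p$.

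The analytic core is the a priori bound
\begin{equation*}
   \|\phi\|_{\Sobolev{1}{p}} \le C\, \|A\phi\|_{\Sobolev{1}{q}'} \qquad \text{for all } \phi\in\Sobolev{1}{p} \text{ and all } 1<p<\infty ,
\end{equation*}
which is the conormal-derivative case of the $L^p$ shift theorem; this is the one step I would quote rather than reprove, referring to Agmon, Douglis and Nirenberg \cite{AgmDouNir59} (or Theorem 5.4 of \cite{Tay81}). Its mechanism is the usual one: for $p\ge 2$ the Calder\'on--Zygmund / ADN boundary estimates on the $C^1$ domain give the $W^{2,p}$ bound for the corresponding strong problem with data in $\Lp\times\Besov{1-1/p}{p}{p}$, which descends to the energy level above, and the range $1<p<2$ then follows by duality from the conjugate exponent. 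The case $p=2$ is the only one I would treat by hand, since there $a(\phi,\phi) = \|\phi\|_{\Sobolev{1}{2}}^2$ and Lax--Milgram applies immediately -- note that it is precisely the zeroth-order term in $-\Delta\phi+\phi$ that supplies this coercivity and removes the constants from the kernel, which is why the statement is clean for \eqref{estimate4}. I expect this elliptic estimate to be the only real obstacle; what follows is soft.

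Granting the a priori bound, $A$ is injective and has closed range, so it only remains to see that $A(\Sobolev{1}{p}) = \Sobolev{1}{q}'$. Since $\Sobolev{1}{q}$ is reflexive, a proper closed range would produce a nonzero $v\in\Sobolev{1}{q}$ with $\langle A\phi,v\rangle = 0$ for every $\phi\in\Sobolev{1}{p}$; but $a$ is symmetric, so this says that the operator obtained from $a$ by interchanging $p$ and $q$ annihilates $v$, and the a priori bound applied with exponent $q$ forces $v = 0$. Hence $A$ is a continuous bijection, its inverse is bounded by the open mapping theorem, and taking $\phi = A^{-1}F$ and combining $\|\phi\|_{\Sobolev{1}{p}}\le C\|F\|_{\Sobolev{1}{q}'}$ with the bound on $\|F\|_{\Sobolev{1}{q}'}$ gives \eqref{estimate4}; uniqueness is the injectivity of $A$.
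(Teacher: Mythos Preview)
The paper does not actually prove this theorem: Section~\ref{sec:regularity} explicitly states that ``with the exception of Theorem~\ref{thm:Crouzeix}, the results in this section are known and presented without proof,'' and Theorem~\ref{thm:elliptic4} is simply listed among the quoted elliptic regularity facts attributed globally to \cite{AgmDouNir59,Tay81,Gri85}. Your variational reformulation via the operator $A:\Sobolev{1}{p}\to\Sobolev{1}{q}'$, the use of Lemma~\ref{lemma:trace_basic} to make sense of the boundary pairing, and the closed-range plus duality argument for surjectivity are all sound and in fact give more detail than the paper offers.

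One small caveat on the quoted step: the Agmon--Douglis--Nirenberg estimates in \cite{AgmDouNir59} and Theorem~5.4 of \cite{Tay81} are stated for smooth domains, not $C^1$ ones, so citing them directly for the a~priori bound on a merely $C^1$ domain is technically loose. For $C^1$ boundaries the correct reference for the Neumann-type problem is the layer-potential machinery of Fabes--Mendez--Mitrea \cite{FabMenMit98}, which the paper itself invokes for the companion Theorem~\ref{thm:elliptic3}; the same methods cover $-\Delta+1$. Since the paper treats the whole theorem as a black box from the literature anyway, this is a matter of attribution rather than a gap in your argument.
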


\begin{theorem}[Theorem 9.2, \cite{FabMenMit98}] \label{thm:elliptic3}
Consider a bounded domain with a $C^{1}$ boundary,  unit outwards normal $\normal$, an exponent $1<p<\infty$,  
and any two functions $\mu \in \Besov{-1/p}{p}{p}$ and
$f \in \Sobolev{1}{q}'$ satisfying the compatibility condition
$$
             \int_{\Omega} f \, d\xvec + \int_{\partial \Omega} \mu \, d\boldsymbol{\sigma} = 0.
$$
Then there exists a weak solution $\phi \in \Sobolev{1}{p}$, unique up to an additive constant, of the problem
$$
     - \Delta \phi  = f, \quad \text{in $\Omega$} \qquad \text{and} \qquad \normal \cdot \nabla \phi = \mu \quad \text{on $\partial \Omega$}.
$$
Moreover, there is a constant $C$ such that
\begin{equation}  \label{estimate3}
     \|  \phi \|_{\Sobolev{1}{p}} \leq C \Big( \| \mu \|_{\Besov{-1/p}{p}{p}} + \| f \|_{\Sobolev{1}{q}'}  \Big) .
\end{equation}
\end{theorem}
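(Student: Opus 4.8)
The plan is to reduce this Neumann problem for the pure Laplacian to the already-solved problem with a zeroth-order term, Theorem \ref{thm:elliptic4}, by absorbing the solution itself into the right-hand side, and then to invoke the Fredholm alternative. Recall that a weak solution means $\phi \in \Sobolev{1}{p}$ with $\int_\Omega \nabla\phi\cdot\nabla v\,d\xvec = \langle f,v\rangle + \langle \mu,\gamma_0 v\rangle$ for all $v \in \Sobolev{1}{q}$, the boundary pairing being legitimate since $\gamma_0 v \in \Besov{1/p}{q}{q}$ by Lemma \ref{lemma:trace_basic} and $\mu$ lies in its dual. Let $\mathcal{L}^{-1}\colon \Sobolev{1}{q}'\times\Besov{-1/p}{p}{p}\to \Sobolev{1}{p}$ be the bounded solution operator of $-\Delta u + u = g$, $\normal\cdot\nabla u = \nu$, provided by Theorem \ref{thm:elliptic4}. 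Since $-\Delta\phi = f$ is equivalent to $-\Delta\phi + \phi = f + \phi$, the function $\phi$ is a weak solution precisely when $\phi = \mathcal{L}^{-1}(f+\phi,\mu)$, i.e. $(I-K)\phi = F$ with $F := \mathcal{L}^{-1}(f,\mu)$ and $K\phi := \mathcal{L}^{-1}(\phi,0)$; note $L^p(\Omega)\hookrightarrow \Sobolev{1}{q}'$ continuously, so $K$ is a well-defined operator on $\Sobolev{1}{p}$.

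First I would show that $K\colon \Sobolev{1}{p}\to\Sobolev{1}{p}$ is compact. It factors as the compact Rellich--Kondrachov embedding $\Sobolev{1}{p}\hookrightarrow L^p(\Omega)$ on a bounded Lipschitz domain, followed by the continuous inclusion $L^p(\Omega)\hookrightarrow \Sobolev{1}{q}'$, followed by the bounded map $\mathcal{L}^{-1}(\cdot,0)$. Then the Riesz--Schauder theory applies to $I-K$: its range is closed and of finite codimension equal to $\dim\ker(I-K^*) = \dim\ker(I-K)$, and $F$ lies in the range iff it annihilates $\ker(I-K^*)$. The kernel of $I-K$ consists exactly of the weak solutions of the homogeneous Neumann problem $-\Delta\phi = 0$, $\normal\cdot\nabla\phi = 0$; by interior regularity these are harmonic and, on a $C^1$ boundary, smooth up to $\partial\Omega$, so the classical integration-by-parts argument forces them to be the constants. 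Hence $\dim\ker(I-K) = 1$ and the cokernel is one-dimensional as well.

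It then remains to identify the single solvability condition. Testing the weak formulation with $v\equiv 1$ gives $\langle f,1\rangle + \langle\mu,1\rangle = \int_\Omega f\,d\xvec + \int_{\partial\Omega}\mu\,d\boldsymbol{\sigma} = 0$, so this compatibility condition is necessary for the existence of a weak solution, i.e. necessary for $F=\mathcal{L}^{-1}(f,\mu)$ to lie in the range of $I-K$. The map $(f,\mu)\mapsto \langle F,\ell\rangle$, for $\ell$ spanning $\ker(I-K^*)$, is a nonzero continuous linear functional on the data space whose kernel therefore contains, and being a closed hyperplane must equal, the compatibility hyperplane. Consequently $F\in\operatorname{range}(I-K)$ if and only if the compatibility condition holds, which yields existence; uniqueness up to an additive constant is immediate from $\ker(I-K) = \mathbb{R}$.

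Finally, for the estimate \eqref{estimate3} I would restrict $I-K$ to the closed subspace $\{\phi \in \Sobolev{1}{p} : \int_\Omega \phi\,d\xvec = 0\}$, where it is a continuous bijection onto its closed image, so the bounded inverse theorem gives $\|\phi\|_{\Sobolev{1}{p}} \le C\|F\|_{\Sobolev{1}{p}}$ for the zero-mean representative, and then $\|F\|_{\Sobolev{1}{p}} = \|\mathcal{L}^{-1}(f,\mu)\|_{\Sobolev{1}{p}} \le C(\|f\|_{\Sobolev{1}{q}'} + \|\mu\|_{\Besov{-1/p}{p}{p}})$ by Theorem \ref{thm:elliptic4}; equivalently one can run the Peetre--Tartar compactness--uniqueness argument directly. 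I expect the main obstacle to be not any single step but the careful bookkeeping needed to make the duality pairings and the boundary integration-by-parts rigorous within the $\Sobolev{1}{p}$--$\Besov{-1/p}{p}{p}$ scale, so that the abstract cokernel is seen to coincide with the stated compatibility condition; once that is in place, the compactness of $K$ and the Fredholm machinery are routine.
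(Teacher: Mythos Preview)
The paper does not give its own proof of this theorem: it is quoted from \cite{FabMenMit98}, and Section~\ref{sec:regularity} explicitly states that all results there except Theorem~\ref{thm:Crouzeix} are presented without proof. Your Fredholm reduction to Theorem~\ref{thm:elliptic4} is a sound alternative route and is essentially correct. It differs from the approach in \cite{FabMenMit98}, which constructs the Neumann solution operator directly via boundary layer potentials on $C^1$ (indeed Lipschitz) domains; that method is heavier but self-contained, whereas yours takes Theorem~\ref{thm:elliptic4} as a black box---which the paper also only cites---and obtains Theorem~\ref{thm:elliptic3} by a short compactness argument.

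Two points deserve tightening. First, for $p<2$ you cannot test the weak formulation with $\phi$ itself to conclude $\ker(I-K)=\RR$; the clean remedy is the bootstrap you allude to: from $\phi=\mathcal{L}^{-1}(\phi,0)$ and Sobolev embedding, repeated application of Theorem~\ref{thm:elliptic4} with increasing exponent lifts $\phi$ into $W^{1,2}(\Omega)$, after which the energy identity $\int_\Omega|\nabla\phi|^2=0$ applies. Second, your hyperplane inclusion is stated in the wrong direction: necessity of the compatibility condition gives $\{\text{solvable}\}\subset\{\text{compatible}\}$, i.e.\ the kernel of $(f,\mu)\mapsto\langle \mathcal{L}^{-1}(f,\mu),\ell\rangle$ is \emph{contained in} the compatibility hyperplane, not the reverse. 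Since both are closed hyperplanes (the first because $\mathcal{L}^{-1}$ is onto and $\ell\neq 0$, the second because $1\in\Sobolev{1}{q}$ and $1\in\Besov{1-1/q}{q}{q}$), the conclusion is unaffected, but the write-up should be corrected.
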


We will also be needing the following additional regularity results.

\begin{theorem}[Theorem , \cite{Gri85}] \label{thm:elliptic_regularity}
Consider a bounded domain with a $C^{1}$ boundary,  outwards normal $\normal$, and an exponent $1<p<\infty$. 
For any  $\mu \in  \Besov{1}{p}{p}$, there exists a unique weak solution $\phi \in \Sobolev{1+1/p}{p}$ of
$  \Delta \phi  = 0$ in $\Omega$ and $ \phi = \mu $ on $\partial \Omega$. Moreover, there exists a constant $C$ for which
\begin{equation}  \label{estimate_regularity1}
           \| \phi \|_{\Sobolev{1+1/p}{p}} \leq C \, \| \mu \|_{W^{1,p}(\partial \Omega)} .
\end{equation}
On the other hand, for  any  $\mu \in  \Besov{1-1/p}{p}{p}$ satisfying 
$$ 
      \int_{\partial \Omega} \mu  \, d\boldsymbol{\sigma}= 0,  
$$
there exists a unique weak solution $\phi \in \Sobolev{1+1/p}{p}$ of
$  \Delta \phi  = 0$ in $\Omega$ and $ \normal \cdot \nabla \phi = \mu $ on $\partial \Omega$. Moreover,
for some constant $C$,
\begin{equation}  \label{estimate_regularity2}
           \| \phi \|_{\Sobolev{1+1/p}{p}} \leq C \, \| \mu \|_{\Besov{1-1/p}{p}{p}} .
\end{equation}
\end{theorem}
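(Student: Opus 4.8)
The plan is to separate what is genuinely new from what is not. Existence and uniqueness of the \emph{weak} solution are already available: for the first (Dirichlet) part one applies Theorem~\ref{thm:elliptic2} with $f=0$ to the datum $\mu\in W^{1,p}(\partial\Omega)\hookrightarrow\Besov{1-1/p}{p}{p}$; for the second (Neumann) part one applies Theorem~\ref{thm:elliptic3} with $f=0$ to $\mu\in\Besov{1-1/p}{p}{p}\hookrightarrow\Besov{-1/p}{p}{p}$, the hypothesis $\int_{\partial\Omega}\mu\,d\boldsymbol{\sigma}=0$ being exactly the compatibility condition required there, and it is also that theorem which forces uniqueness to hold only modulo an additive constant. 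Thus the only thing left to prove is the \emph{gain of regularity}: that the solution, already known to lie in $\Sobolev{1}{p}$, in fact belongs to $\Sobolev{1+1/p}{p}$, together with the two bounds \eqref{estimate_regularity1}--\eqref{estimate_regularity2}. It is worth recording at the outset that, since $1+1/p\notin\mathbb{N}$ when $1<p<\infty$, the norm $\|\cdot\|_{\Sobolev{1+1/p}{p}}$ is the Gagliardo--Slobodeckij norm, so $\Sobolev{1+1/p}{p}=B^{1+1/p}_{p,p}(\Omega)$; what is wanted is therefore a $+1/p$ Besov smoothing estimate for a harmonic function in terms of the regularity of its boundary data.

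One might first hope to obtain this by interpolating Theorem~\ref{thm:elliptic2} against the $W^{2,p}$ shift theorem, but the latter fails on a domain that is only $C^1$ rather than $C^{1,1}$: on a $C^1$ domain the maximal regularity of a harmonic function saturates at $B^{1+1/p}_{p,p}(\Omega)$, which is why the estimate, although classical for smooth domains \cite{Gri85}, must be read off from the $C^1$-domain theory of Jerison--Kenig \cite{JerKen95} and Fabes--M\'endez--Mitrea \cite{FabMenMit98}. The model computation is on the half-space $\RR^n_+$, where the harmonic function with prescribed trace is the Poisson extension $P\mu$, with $\widehat{P\mu}(\xi',x_n)=e^{-|\xi'|x_n}\widehat{\mu}(\xi')$; a direct kernel/Fourier estimate --- the classical half-space trace identity, and the source of the exponent $1/p$ --- gives $\|P\mu\|_{B^{1+1/p}_{p,p}(\RR^n_+)}\le C\|\mu\|_{W^{1,p}(\RR^{n-1})}$ in the Dirichlet case and the analogue with $\|\mu\|_{B^{1-1/p}_{p,p}(\RR^{n-1})}$ when $\mu$ is the normal derivative.

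The main obstacle is transporting this half-space estimate to a general $C^1$ domain. A naive localization by a $C^1$ partition of unity followed by flattening does \emph{not} suffice, because composition with a merely $C^1$ diffeomorphism preserves the Besov scale $B^s_{p,p}$ only for $0\le s\le1$, whereas the target regularity $1+1/p$ lies strictly above $1$. The way around this is to avoid changes of variable entirely and to represent $\phi$ by boundary layer potentials: the double layer $\mathcal D$ for the Dirichlet problem and the single layer $\mathcal S$ for the Neumann problem. By the Coifman--McIntosh--Meyer / Fabes--Jodeit--Rivi\`ere Calder\'on--Zygmund theory these operators are bounded on $L^p(\partial\Omega)$ and on the relevant Besov scales over a $C^1$ surface and are invertible there modulo a finite-dimensional obstruction --- in the Neumann case the cokernel of $-\tfrac12 I+K^{\ast}$ is spanned by the constants, which is precisely why $\int_{\partial\Omega}\mu\,d\boldsymbol{\sigma}=0$ appears and why the solution is unique only up to a constant --- while their Newtonian-type kernels smooth, with the output regularity capped at $B^{1+1/p}_{p,p}(\Omega)$ by the $C^1$ regularity of $\partial\Omega$. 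Solving the boundary integral equation and inserting the density back into $\mathcal D$ or $\mathcal S$ then yields $\phi\in B^{1+1/p}_{p,p}(\Omega)=\Sobolev{1+1/p}{p}$ together with \eqref{estimate_regularity1}--\eqref{estimate_regularity2}. Finally, uniqueness needs nothing new: it already holds in the larger space $\Sobolev{1}{p}$ by Theorems~\ref{thm:elliptic2} and~\ref{thm:elliptic3}, hence a fortiori in $\Sobolev{1+1/p}{p}$.
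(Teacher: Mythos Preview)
The paper does not supply a proof of this statement: Theorem~\ref{thm:elliptic_regularity} sits in Section~\ref{sec:regularity}, whose opening paragraph says explicitly that, with the exception of Theorem~\ref{thm:Crouzeix}, the results there are ``known and presented without proof''; the theorem is simply attributed to Grisvard~\cite{Gri85} (with the theorem number left blank). So there is nothing in the paper to compare your argument against.

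That said, your sketch is the right one for a $C^1$ domain and is essentially the route taken in the references the paper already cites for the companion results, namely Jerison--Kenig~\cite{JerKen95} and Fabes--M\'endez--Mitrea~\cite{FabMenMit98}. You correctly isolate the only nontrivial point --- the $+1/p$ smoothing --- and you correctly identify and dispose of the main obstruction: a $C^1$ flattening does not act on $B^{s}_{p,p}$ for $s>1$, so one must bypass changes of variables and represent $\phi$ by layer potentials, invoking the Coifman--McIntosh--Meyer / Fabes--Jodeit--Rivi\`ere $L^p$ theory on $C^1$ surfaces for boundedness and invertibility of $\pm\tfrac12 I+K$ and $\pm\tfrac12 I+K^{\ast}$. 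Two small remarks. First, the attribution to \cite{Gri85} is a little loose: Grisvard's monograph gives the shift theorem under stronger boundary regularity (typically $C^{1,1}$ for $W^{2,p}$), and the sharp $B^{1+1/p}_{p,p}$ endpoint on merely $C^1$ domains is really the layer-potential result you describe; your instinct to route the citation through \cite{JerKen95,FabMenMit98} is well placed. Second, your outline stops just short of stating the precise mapping property that closes the argument, namely that the single and double layer potentials send $B^{1-1/p}_{p,p}(\partial\Omega)$ and $B^{1}_{p,p}(\partial\Omega)$, respectively, boundedly into $B^{1+1/p}_{p,p}(\Omega)$ on a $C^1$ domain; making that sentence explicit (with a pointer to \cite{FabMenMit98}) would turn the sketch into a complete proof.
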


Include a discussion of the results of Mitrea.

Later in this paper, we will be needing three additional technical results.
The first of these  is demonstrated in $L^2$ in Section 2.2 of 
\cite{GirRav86} using two deep functional analytic results, the first from Peetre \cite{Pee66} and Tartar \cite{Tar78}, the second from Ne\v{c}as \cite{Nec66}.
Although we have not found the next lemma, as stated, within the literature, in fact both the result of Peetre and Tartar (stated for Banach spaces)
and the one of Ne\v{c}as (proved for $W^{k,p}$, $k$ integer and $1<p<\infty$) are sufficiently general that the proof of the next lemma, as
given in \cite{GirRav86}, applies verbatim. Since the result is rather tangential, technical and in any case, somewhat natural, we do not attempt a proof here.

\begin{lemma} \label{thm:Necas}
Assume that $\Omega \subset \RR^n$ has a Lipschitz boundary and $1<p <\infty$. 
If $\phi \in L^p_{\operatorname{loc}}(\Omega)$ and $\nabla \phi \in \Sobolev{-1}{p}$ then $\phi \in \Lp$.
\end{lemma}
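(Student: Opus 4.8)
The plan is to recognize this as de Rham's theorem in $L^p$ in disguise, and to reduce its content to the quantitative Ne\v{c}as inequality
\[
   \| u \|_{\Lp} \;\le\; C \big( \| u \|_{\Sobolev{-1}{p}} + \| \Grad u \|_{\Sobolev{-1}{p}} \big) , \qquad u \in \Lp ,
\]
valid on any bounded Lipschitz domain for $1<p<\infty$. I would establish this inequality in the three movements of \cite{GirRav86}, the only change being that the $L^2$ Fourier--Plancherel computation is replaced by a Fourier-multiplier argument so as to reach $p\ne2$. \emph{First}, the model estimate on $\RR^n$: for $u\in\mathcal S'(\RR^n)$ with $u$ and $\Grad u$ in $W^{-1,p}(\RR^n)$ one has the trivial identity $u=(I-\Delta)^{-1}u-\sum_{j=1}^n\partial_j(I-\Delta)^{-1}\partial_j u$, which is just $u=(I-\Delta)^{-1}(I-\Delta)u$; since the Bessel potential $(I-\Delta)^{-1}$ carries $W^{s,p}(\RR^n)$ boundedly into $W^{s+2,p}(\RR^n)$ for every $s$ and every $1<p<\infty$ — by the H\"ormander--Mikhlin multiplier theorem, and this is the one point where the range of $p$ is genuinely used — the first term lies in $W^{1,p}(\RR^n)\subset L^p(\RR^n)$ and each remaining term in $L^p(\RR^n)$, with the asserted bound. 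The half-space version follows by reflection across the flattened boundary, the reflection being bounded on $W^{-1,p}$ of the half-space.

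\emph{Second}, a finite $C^1$ atlas of $\partial\Omega$, a subordinate partition of unity $\{\chi_k\}$ and the flattening of the boundary charts reduce the estimate on $\Omega$ to the half-space case, the commutators $[\chi_k,\Grad]$ and the chart Jacobians producing only terms controlled by a norm of $u$ under which bounded subsets of $\Lp$ are precompact, for instance $\|u\|_{\Sobolev{-1}{p}}$. \emph{Third}, those remainder terms are absorbed by the abstract lemma of Peetre and Tartar, which delivers the displayed inequality on $\Lp$, and, after a routine regularization, its qualitative form. The lemma then follows by de Rham's theorem in $L^p$, which is itself a standard consequence of the Ne\v{c}as inequality together with the solvability of the divergence equation in $W^{1,q}_0$ on a Lipschitz domain: given $\phi\in L^p_{\operatorname{loc}}(\Omega)$ with $\Grad\phi\in\Sobolev{-1}{p}$, the field $\Grad\phi\in W^{-1,p}(\Omega)^n$ annihilates every divergence-free $\vvec\in W^{1,q}_0(\Omega)^n$ — for compactly supported smooth $\vvec$ with $\Div\vvec=0$ one has $\langle\Grad\phi,\vvec\rangle=-\int_\Omega\phi\,\Div\vvec\,d\xvec=0$, legitimately so since $\phi$ is locally integrable and $\vvec$ has compact support, and such $\vvec$ are dense among the divergence-free fields in $W^{1,q}_0(\Omega)^n$ — whence by de Rham there is $\pi\in\Lp$ with $\Grad\phi=\Grad\pi$ in $\mathcal D'(\Omega)$; thus $\phi-\pi$ is constant on each connected component of $\Omega$, and as $\Omega$ is bounded, $\phi=\pi+\text{const}\in\Lp$.

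The step I expect to be the real obstacle is not any of the algebra but the local-to-global passage inside the proof of the Ne\v{c}as inequality: upgrading an a priori estimate to an actual bound (and to membership) cannot be done by naively testing $u$ against functions whose supports run up to $\partial\Omega$, and it is precisely the compactness input of Peetre and Tartar — equivalently the compactness argument in Ne\v{c}as's own proof — that resolves it. The two supporting facts, the $L^p$-boundedness of the pertinent multipliers for $1<p<\infty$ and the solvability of $\Div\vvec=g$ in $W^{1,q}_0$ with norm control, are classical but do require the boundary to be at least Lipschitz, which is one reason the $C^1$ (or Lipschitz) hypothesis cannot be cheaply weakened here. As the excerpt already notes, every ingredient is present in \cite{GirRav86}, and both the Peetre--Tartar lemma (for Banach spaces) and Ne\v{c}as's estimate (for $W^{k,p}$ with $k\in\mathbb Z$ and $1<p<\infty$) are stated at a level of generality that makes the transfer literal; for that reason I would, like the author, present only this outline rather than a complete proof.
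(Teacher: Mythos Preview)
Your outline is correct and matches the paper's approach exactly: the paper does not prove the lemma but simply observes that the proof in \cite{GirRav86}, built on the Peetre--Tartar lemma and Ne\v{c}as's inequality, applies verbatim since both ingredients are already stated for general $1<p<\infty$. Your sketch makes this transfer explicit (H\"ormander--Mikhlin in place of Plancherel, de Rham for the final $L^p_{\operatorname{loc}}\to L^p$ step); the only slip is that you invoke a $C^1$ atlas where the hypothesis supplies, and the argument only needs, Lipschitz charts.
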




The second technical result  is an extension of a density result of Ben Belgacem et al. \cite{BenBerCosDau97} based on a proof by Michel Crouzeix.

\begin{theorem}  \label{thm:Crouzeix}
Consider a bounded domain $\Omega \subset \RR^n$ with Lipschitz boundary and $1 < p < \infty$. 
Then $C^{\infty}(\overline{\Omega})$ is dense in the space
$$
     G = \left\{   u \in \Sobolev{1}{p} \big| \gamma_0(u) \in W^{1,p}(\partial \Omega) \right\}, 
$$
with norm $\| u\|_G = \| u \|_{\Sobolev{1}{p}}+ \| \gamma_0(u) \|_{W^{1,p}(\partial \Omega)}$.
\end{theorem}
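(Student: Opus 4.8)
The plan is to localise near $\partial\Omega$ and, in each boundary chart, to write $u$ as the sum of a function with vanishing trace, handled by the density of $C_0^\infty(\Omega)$ in $W^{1,p}_0(\Omega)$, and an explicit lifting of $\gamma_0 u$ that is \emph{constant in the direction transverse to the boundary}. The reason a naive approach fails, and hence the conceptual obstacle here, is that mollifying $u$ directly only produces smooth approximants whose traces converge in the natural trace space $\Besov{1-1/p}{p}{p}$, not in the finer norm of $W^{1,p}(\partial\Omega)$; the transverse-constant lifting fixes this, because its trace is \emph{by construction} a $W^{1,p}$ function of the tangential variable, and it is exactly here that the hypothesis $\gamma_0 u\in W^{1,p}(\partial\Omega)$ is used.

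First I would fix a finite open cover $V_0\Subset\Omega$, $V_1,\dots,V_N$ of $\overline\Omega$ in which each $V_j$, $j\ge 1$, is a boundary chart where, after a rigid motion, $\Omega\cap V_j=\{(\xvec',x_n):x_n>\varphi_j(\xvec')\}\cap V_j$ for some Lipschitz $\varphi_j$, together with a subordinate smooth partition of unity $\{\theta_j\}$. Since $\theta_0 u$ is compactly supported in $\Omega$, ordinary mollification gives $C_0^\infty(\Omega)$ approximants converging to it in $W^{1,p}(\Omega)$ with both traces identically zero, so it suffices to approximate a single boundary piece $u:=\theta_j u$; this is compactly supported in $V_j$, lies in $G$, and has $g:=\gamma_0 u$ compactly supported in $\partial\Omega\cap V_j$ and in $W^{1,p}(\partial\Omega)$. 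Parametrising $\partial\Omega\cap V_j$ by an open set $\omega_j\subset\RR^{n-1}$ through the graph of $\varphi_j$, under which the $W^{1,p}(\partial\Omega)$- and $W^{1,p}(\omega_j)$-norms are equivalent because $\varphi_j$ is Lipschitz, I will regard $g$ as a compactly supported element of $W^{1,p}(\RR^{n-1})$.

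The key construction is then the transverse-constant lifting $\hat g(\xvec',x_n):=g(\xvec')\chi(x_n)$, where $\chi\in C_0^\infty(\RR)$ is chosen equal to $1$ near $\varphi_j(\operatorname{supp} g)$ with support small enough that $\hat g$ is supported in $V_j$. Computing $\Grad\hat g$ shows $\hat g\in W^{1,p}(\Omega)$, precisely because $g\in W^{1,p}(\RR^{n-1})$, and $\gamma_0\hat g=g\,(\chi\circ\varphi_j)=g$. Hence $v:=u-\hat g\in W^{1,p}(\Omega)$ satisfies $\gamma_0 v=0$, so $v\in W^{1,p}_0(\Omega)$, i.e. the closure of $C_0^\infty(\Omega)$ in $W^{1,p}(\Omega)$, using that $\Omega$ is Lipschitz. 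I would then take $v_k\in C_0^\infty(\Omega)\subset C^\infty(\overline\Omega)$ with $v_k\to v$ in $W^{1,p}(\Omega)$; since $\gamma_0 v_k\equiv 0$, the convergence holds in the $G$-norm. For $\hat g$, pick $g_k\in C_0^\infty(\omega_j)$ with $g_k\to g$ in $W^{1,p}(\RR^{n-1})$ and $\operatorname{supp} g_k$ close to $\operatorname{supp} g$, by the density of test functions in $W^{1,p}$, and set $\hat g_k(\xvec',x_n):=g_k(\xvec')\chi(x_n)\in C^\infty(\overline\Omega)$, supported in $V_j$. Then $\hat g_k\to\hat g$ in $W^{1,p}(\Omega)$ by the product structure, while $\gamma_0\hat g_k=g_k\,(\chi\circ\varphi_j)=g_k\to g=\gamma_0\hat g$ in $W^{1,p}(\partial\Omega)$, since multiplication by the Lipschitz function $\chi\circ\varphi_j$, which equals $1$ on $\operatorname{supp} g_k$, is bounded on $W^{1,p}$. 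Thus $v_k+\hat g_k\to u$ in the $G$-norm; summing these approximants over $j$ together with the interior one and using the triangle inequality for $\|\cdot\|_G$ finishes the proof.

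I expect the genuinely delicate point to be the very first one: recognising that the obvious candidate, namely mollifying $u$ and restricting, does not yield trace convergence in $W^{1,p}(\partial\Omega)$, and that the remedy is to peel off a lifting of $\gamma_0 u$ whose transverse-constant form forces $\hat g\in W^{1,p}(\Omega)$, a membership that would fail under the weaker assumption $g\in\Besov{1-1/p}{p}{p}$. Everything else is routine: that $\gamma_0$ acts on $g(\xvec')\chi(x_n)$ as restriction (by density), that $W^{1,p}_0(\Omega)=\ker\gamma_0$ for Lipschitz $\Omega$, that bi-Lipschitz changes of coordinates preserve $W^{1,p}$ with equivalent norms, and the bookkeeping that keeps every intermediate function compactly supported inside one chart so that the $C_0^\infty$ approximants are honest elements of $C^\infty(\overline\Omega)$.
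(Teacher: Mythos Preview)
Your proof is correct and follows essentially the same approach as the paper's (attributed to Crouzeix via \cite{BenBerCosDau97}): split $u$ into a zero-trace part in $W^{1,p}_0(\Omega)$, approximated by $C_0^\infty(\Omega)$, plus a ``transverse-constant'' lifting of $\gamma_0 u$ whose membership in $W^{1,p}(\Omega)$ relies precisely on $\gamma_0 u\in W^{1,p}(\partial\Omega)$, then approximate the boundary datum in $W^{1,p}(\partial\Omega)$ and lift. The only cosmetic difference is your use of Lipschitz-graph charts $(\xvec',x_n)$ where the paper uses starlike patches with spherical coordinates $(r,\theta)$ and the radial lifting $u_k(r,\theta)=\alpha_k(r)\,u(R_k(\theta),\theta)$.
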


\begin{proof}
The proof in \cite{BenBerCosDau97} applies mutatis mutandis. 

In a few words, the proof proceeds as follows. The boundary of the domain $\partial \Omega$
is covered by a finite set of open starlike sets $\{ \Omega_k \}_k$ each of which can therefore be parametrized by 
spherical coordinates $(r,\theta)$. The boundary is then described by $\partial \Omega_k = \{(r,\theta) | r = R_k(\theta)   \}$ 
where $R_k$ is a Lipschitz function. 
Choose $u \in G$ and, using the fact that $\gamma_0(u) \in W^{1,p}(\partial \Omega)$,
construct $u_k \in W^{1,p}(\Omega_k)$ such that $u_k  = u$ on $ \partial \Omega_k \cap \partial \Omega $ (for example,
take $u_k(r,\theta) = \alpha_k(r) u(R_k(\theta),\theta)$ where $\alpha_k \equiv 1$ but vanishes in a neighborhood of $r=0$).

With the help of a partition of unity for the covering $\{ \Omega_k \}$, a function $u_b \in W^{1,p}(\Omega)$ can be constructed
such that $\gamma_0(u_b) = \gamma_0(u)$. Then $u = u_b + u_0$ where $u_0 \in W^{1,p}_0(\Omega)$ can be approximated in 
$C^{\infty}_0(\Omega)$. Using the same construction as for $u_b$, we can approximate $\gamma_0(u)$ in $C^{\infty}(\partial \Omega)$
and extend the approximation into $\Omega$, thus providing an approximation of $u_b$.    
\end{proof}

The final ingredient is well-known in the literature as the Babu\v{s}ka-Lax-Milgram Theorem \cite{}. We provide a complete statement of the result because it hinges on two important conditions which we would need to define in any case.

\begin{theorem}[Babu\v{s}ka-Lax-Milgram]  \label{thm:Lax-Milgram}
Let $X$ and $Y$ be reflexive Banach spaces and consider
$B:X\times Y \longrightarrow \RR$ 
a continuous, bilinear form satisfying the following two conditions :
\begin{itemize}
\item[(i)] $B$ is non-degenrate with respect to the second variable, that is to say, for each non-zero $y \in Y$
                there exists $x \in X$ such that $B(x,y) \neq 0$ ;
\item[(ii)]  $B$ satisfies an $\inf$-$\sup$ condition, that is there exists a strictly positive constant $c$ such that
\begin{equation}     \label{inf-sup}
            c    \leq    \inf_{x \in X} \sup_{y \in Y}         \frac{\big| B(x, y) \big|}{\|x\|_{X} \| y \|_{Y} }.
\end{equation}              
\end{itemize}
Then for every $\ell \in Y'$, there exists a unique solution $x \in X$ to
$$
          B(x,y) = \ell(y), \qquad \forall y \in Y.
$$
Moreover, $c \| x \|_X \leq \| \ell \|_{Y'}$.
\end{theorem}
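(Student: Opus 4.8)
The plan is to reduce the Babuška--Lax--Milgram theorem to the closed range theorem together with the standard Hahn--Banach / Riesz machinery for reflexive spaces. First I would associate to the bilinear form $B$ the bounded linear operator $A : X \longrightarrow Y'$ defined by $\langle Ax, y \rangle = B(x,y)$ for all $x \in X$, $y \in Y$; continuity of $B$ gives $\|Ax\|_{Y'} \le \|B\| \, \|x\|_X$, so $A$ is bounded. The $\inf$-$\sup$ condition \eqref{inf-sup} says precisely that $\|Ax\|_{Y'} \ge c \, \|x\|_X$ for every $x$, i.e. $A$ is bounded below. Solving $B(x,y) = \ell(y)$ for all $y$ is exactly solving $Ax = \ell$ in $Y'$, and the bound $c\|x\|_X \le \|\ell\|_{Y'}$ is immediate once existence is known, as is uniqueness (if $Ax = 0$ then $c\|x\|_X \le \|Ax\|_{Y'} = 0$).

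So the whole content is surjectivity of $A$. The key steps are: (a) show $A$ has closed range. Since $A$ is bounded below, any Cauchy sequence $Ax_n$ in $Y'$ forces $x_n$ Cauchy in $X$ (from $c\|x_n - x_m\|_X \le \|Ax_n - Ax_m\|_{Y'}$); by completeness of $X$, $x_n \to x$, and by continuity $Ax_n \to Ax$, so the limit lies in the range. Hence $\operatorname{Ran}(A)$ is a closed subspace of $Y'$. (b) Show $\operatorname{Ran}(A) = Y'$. Suppose not; then by Hahn--Banach there is a nonzero functional on $Y'$ vanishing on $\operatorname{Ran}(A)$, and since $Y$ is reflexive this functional is represented by some nonzero $y \in Y$, so that $\langle Ax, y \rangle = 0$ for all $x \in X$, i.e. $B(x,y) = 0$ for all $x$. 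This contradicts the non-degeneracy hypothesis (i). Therefore $A$ is onto, and for the given $\ell \in Y'$ we obtain the desired $x$ with $Ax = \ell$.

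I would present this cleanly in the order: define $A$; record boundedness and the lower bound; deduce injectivity, the a priori estimate, and closedness of the range; then the Hahn--Banach plus reflexivity argument to kill the orthogonal complement and conclude surjectivity; finally read off uniqueness and the norm bound. The only place where anything could go subtly wrong is step (b): one must be careful that the annihilator of $\operatorname{Ran}(A)$ in $(Y')' $ is identified with a subset of $Y$ \emph{via the canonical isometry} $J : Y \to Y''$, which is exactly where reflexivity of $Y$ is used; without it one would only get a functional in $Y''$ and the non-degeneracy condition, stated in terms of elements of $Y$, would not directly apply. Reflexivity of $X$ is not actually needed for this argument, but there is no harm in keeping it in the hypotheses since it is harmless and matches the applications. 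I expect this identification of the annihilator with an element of $Y$ to be the one genuinely load-bearing point; everything else is routine functional analysis.
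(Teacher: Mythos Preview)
Your argument is correct and is the standard functional-analytic proof of the Babu\v{s}ka--Lax--Milgram theorem: reduce to showing that the associated operator $A:X\to Y'$ is a bijection, use the $\inf$-$\sup$ bound for injectivity and closedness of the range, and then use Hahn--Banach together with reflexivity of $Y$ to identify the annihilator of the range with an element of $Y$, which the non-degeneracy hypothesis forces to be zero. Your observation that reflexivity of $X$ is not actually used is also correct.

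There is nothing to compare against, however: the paper does not supply a proof of this theorem. It is quoted as a well-known result from the literature, stated only so that the two hypotheses (non-degeneracy and the $\inf$-$\sup$ condition) are explicitly recorded for later use. So your write-up stands on its own as a self-contained proof rather than as an alternative to anything in the paper.
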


\section{Spaces and traces}
\label{sec:spaces}

In this section, we present some basic function theoretic results in function  spaces typically encountered in 
electromagnetism but usually in an $L^2$ setting \cite{Mon03}. 
The proofs of the results in this section can sometimes be found in the literature but the proofs
are often only briefly sketched and/or appear buried deep in the papers themselves.  In particular, we refer 
the interested reader to either the short paper \cite{MitMit02} or Section 2 of \cite{Mit04}, where the definitions are outlined for Riemannian manifolds,  
or  Section 9 of \cite{FabMenMit98} where the results are presented without the geometry but more succinctly.
At the risk of appearing pedantic, we have chosen to present these concepts in a 
logically complete fashion since this paper as a whole is likely to interest some engineers
with less experience with the mathematical literature.

 For electromagnetic problems, the divergence and the curl of a vector field must be well-defined. This implies that one must 
 understand the following two spaces
 \begin{align}   \label{defn:Wsp_div}
  \Wdiv & = \Big\{  \uvec \in  L^{p}(\Omega)^3\Big|  \Div \uvec  \in  L^p(\Omega) \Big\} , \\
 \Wcurl & = \Big\{  \uvec \in  L^{p}(\Omega)^3 \Big|  \curl \uvec \in   L^{p}(\Omega)^3 \Big\}  .
\end{align}
It is important to observe that these are again Banach spaces with norms, respectively
 \begin{equation}   \label{norm:Wsp_div}
  \begin{array}{rcl}
        \| \uvec \|_{\Wdiv}   &  := & \| \uvec \|_{\Lp} + \| \Div \uvec \|_{\Lp},  \\
        \| \uvec \|_{\Wcurl}  & := & \| \uvec \|_{\Lp} + \| \curl \uvec \|_{\Lp},
  \end{array}  
\end{equation}
and that there exist continuous injections $W^{1,p}(\Omega)^3 \hookrightarrow \Wdiv$ and
$W^{1,p}(\Omega)^3 \hookrightarrow \Wcurl$. On the other hand, these injections are not surjective.
We define the closure of $C^{\infty}_0(\Omega)^3$ in $\Wdiv $ and $\Wcurl $
to be respectively  $W^{p}_0( {\text{div}}; \Omega) $ and $W^{p}_0( {\text{curl}}; \Omega) $.

We need to know how to interpret the values along the boundary of functions  in either $ \Wdiv$ or $\Wcurl$.
We begin the analysis of $\Wdiv$ with a density result which will allow us to extends many operators quite naturally to
all elements in these spaces. Theorem \ref{thm:densityWdiv} is similar to Theorem 3.22 of \cite{Mon03} and its proof is simply an adaptation
to $L^p$ spaces.


\begin{theorem} \label{thm:densityWdiv}
Assuming that $\Omega$ has a Lipschitz boundary and $1<p<\infty$, then the set $C^{\infty}(\overline{\Omega})^3$ is dense in 
$W^{p}(\text{\emph{div}};\Omega)$.
\end{theorem}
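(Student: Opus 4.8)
The plan is to prove the density of $C^\infty(\overline\Omega)^3$ in $\Wdivi$ by the standard localize–mollify–extend argument, adapted from the $L^2$ proof in Monk. First I would show that an arbitrary $\uvec \in \Wdivi$ can be approximated by functions in $\Wdivi$ that are smooth in the interior: cover $\Omega$ by a finite collection of open sets, take a subordinate partition of unity $\{\varphi_i\}$, and write $\uvec = \sum_i \varphi_i \uvec$. Because $\Div(\varphi_i \uvec) = \varphi_i \Div \uvec + \nabla \varphi_i \cdot \uvec \in \Lp$, each $\varphi_i \uvec$ again lies in $\Wdivi$, so it suffices to approximate each piece separately. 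For the piece supported in the interior of $\Omega$, mollification with a standard smooth kernel gives convergence in $\Lp$ of both the field and its divergence, using that convolution commutes with $\Div$ and that translation is continuous on $L^p$ for $1 \le p < \infty$ — this is exactly where reflexivity/finiteness of $p$ enters.

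Next I would treat the pieces whose support meets $\partial \Omega$. Fix such an index; near the relevant boundary patch, flatten the boundary using the Lipschitz (indeed $C^1$, though Lipschitz suffices here) chart so that $\Omega$ locally becomes $\{x_n > \text{Lip graph}\}$, and pull the vector field back, checking that the divergence transforms with an $L^p$-bounded Jacobian factor so the transported field still lies in the local $\Wdivi$. Then perform the dilation/translation trick: for $\lambda$ slightly larger than $1$, set $\uvec_\lambda(\xvec) = \uvec(\lambda(\xvec - \xvec_0) + \xvec_0)$ with $\xvec_0$ chosen so that $\overline{\Omega}$ near the patch is pushed strictly into the interior of the domain of definition of $\uvec$; $\uvec_\lambda \to \uvec$ in $\Wdivi$ (again by $L^p$-continuity of dilations and the scaling of $\Div$), and now $\uvec_\lambda$ is defined on a neighborhood of $\overline\Omega$ in that chart, hence can be mollified to a genuinely smooth function without losing the boundary. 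Undo the chart, reassemble via the partition of unity, and conclude that $\uvec$ is an $\Wdivi$-limit of elements of $C^\infty(\overline\Omega)^3$.

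The main obstacle — really the only nontrivial point — is the boundary step: verifying that the local change of variables straightening a merely Lipschitz boundary maps $\Wdivi$ into itself continuously, i.e. that the divergence of the transformed field is controlled in $\Lp$. The Jacobian of a bi-Lipschitz map is only in $L^\infty$, not continuous, so one must be slightly careful about how $\Div(\uvec \circ \Phi)$ is expressed; the clean way is to argue weakly/distributionally (test against $C^\infty_0$, integrate by parts, use the chain rule for Sobolev functions under bi-Lipschitz maps) rather than pointwise. Once that transport estimate is in hand, everything else is a routine combination of partition of unity, mollification, and the dilation trick, and the constants are all uniform because there are only finitely many charts. I would remark that the argument is robust in $p$ precisely because it never uses anything beyond $1 \le p < \infty$ (continuity of translations on $L^p$) and the Leibniz rule for the divergence, so no restriction on $p$ beyond $1<p<\infty$ is needed, consistently with the theorem statement.
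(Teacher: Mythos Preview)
Your localize--translate--mollify argument is correct and is the classical ``direct'' route (essentially the $H(\operatorname{div})$ proof in Girault--Raviart, which goes through verbatim in $L^p$). The paper, however, takes a genuinely different path: a duality argument. It shows that any $\ell\in\Wdiv'$ vanishing on $C^\infty(\overline\Omega)^3$ must vanish identically. The device is the isometric embedding $\uvec\mapsto(\uvec,\Div\uvec)$ into $L^p(\Omega)^4$, followed by Hahn--Banach to represent $\ell$ as $\ell(\uvec)=(\uvec,\vvec)+(\Div\uvec,w)$ with $\vvec\in L^q(\Omega)^3$, $w\in L^q(\Omega)$. Testing against restrictions of $C^\infty_0(\RR^3)^3$ (via zero-extension of $\vvec,w$) forces $E_0\vvec=\nabla(E_0 w)$ distributionally, hence $w\in W^{1,q}_0(\Omega)$; approximating $w$ by $w_n\in C^\infty_0(\Omega)$ and integrating by parts then kills $\ell$ on all of $\Wdiv$.

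The duality approach is slicker in this context: it completely sidesteps the coordinate-change issue you correctly flag as the main obstacle (no Piola transform, no bi-Lipschitz Jacobian bookkeeping), and the same representation trick is recycled verbatim for $\Wcurl$ and for the characterisations of $W^p_0(\operatorname{div};\Omega)$ and $W^p_0(\operatorname{curl};\Omega)$ later in the paper. Your approach, on the other hand, is constructive --- it actually produces the approximating sequence --- and is the more familiar template. One simplification worth noting in your sketch: you do not need to flatten the boundary at all. For a Lipschitz domain each boundary chart admits a fixed interior direction $\nu$ (the cone axis) such that the pure translation $\uvec_t(\xvec)=\uvec(\xvec+t\nu)$ pushes the local support strictly inside $\Omega$; since translation commutes with $\Div$ and is continuous on $L^p$, you can mollify directly afterwards and avoid the bi-Lipschitz change of variables entirely.
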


The proof of this result requires three elementary lemmas.

\begin{lemma}
For $1<p<\infty$, the space $C^{\infty}_0(\RR^3)^3$ is dense in both $W^{p}(\text{\emph{div}};\RR^3)$ and $W^{p}(\text{\emph{curl}};\RR^3)$.
\end{lemma}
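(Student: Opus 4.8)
The plan is to prove density by the standard mollification argument, carried out separately for the two spaces since the operators ($\Div$ and $\curl$) commute with convolution in essentially the same way. Let $\uvec \in W^{p}(\text{\emph{div}};\RR^3)$, so that $\uvec \in L^p(\RR^3)^3$ and $\Div \uvec \in L^p(\RR^3)$. First I would handle the issue of compact support: pick a cutoff function $\chi \in C^{\infty}_0(\RR^3)$ with $\chi \equiv 1$ on the unit ball, set $\chi_R(\xvec) = \chi(\xvec/R)$, and consider $\uvec_R = \chi_R \uvec$. By dominated convergence $\uvec_R \to \uvec$ in $L^p(\RR^3)^3$, and since $\Div(\chi_R \uvec) = \chi_R \Div \uvec + (\nabla \chi_R)\cdot \uvec$ with $\|\nabla \chi_R\|_{\infty} = O(1/R) \to 0$, we also get $\Div \uvec_R \to \Div \uvec$ in $L^p(\RR^3)$. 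Hence it suffices to approximate compactly supported elements.

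Next, for $\uvec$ with compact support, take a standard mollifier $\rho_{\varepsilon}(\xvec) = \varepsilon^{-3}\rho(\xvec/\varepsilon)$ with $\rho \in C^{\infty}_0(\RR^3)$, $\rho \geq 0$, $\int \rho = 1$, and set $\uvec_{\varepsilon} = \rho_{\varepsilon} * \uvec$. Then $\uvec_{\varepsilon} \in C^{\infty}(\RR^3)^3$ with compact support, so $\uvec_{\varepsilon} \in C^{\infty}_0(\RR^3)^3$. The key identity is that differentiation commutes with convolution in the distributional sense, so $\Div \uvec_{\varepsilon} = \rho_{\varepsilon} * (\Div \uvec)$; since $\Div \uvec \in L^p(\RR^3)$, the standard fact that $\rho_{\varepsilon} * g \to g$ in $L^p$ for any $g \in L^p$ (when $1 \le p < \infty$) gives both $\uvec_{\varepsilon} \to \uvec$ and $\Div \uvec_{\varepsilon} \to \Div \uvec$ in the respective $L^p$ norms, i.e. convergence in $W^{p}(\text{\emph{div}};\RR^3)$. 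Combining with the truncation step via a diagonal argument completes the proof. The argument for $W^{p}(\text{\emph{curl}};\RR^3)$ is identical word for word, using $\curl(\chi_R \uvec) = \chi_R \curl \uvec + (\nabla \chi_R) \times \uvec$ for the truncation and $\curl \uvec_{\varepsilon} = \rho_{\varepsilon} * (\curl \uvec)$ for the mollification.

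There is no real obstacle here; the only points requiring a modicum of care are (i) verifying that $\Div$ (resp. $\curl$) genuinely commutes with convolution when $\uvec$ is merely an $L^p$ vector field with $L^p$ distributional divergence — this is immediate from the definition of distributional derivative and Fubini — and (ii) ensuring the truncation does not spoil the divergence, which is why the $O(1/R)$ decay of $\nabla\chi_R$ is recorded explicitly. The restriction $1 < p < \infty$ (indeed $p < \infty$) is exactly what is needed for $\rho_{\varepsilon} * g \to g$ in $L^p$; reflexivity plays no role in this particular lemma. I would present the $W^{p}(\text{\emph{div}};\RR^3)$ case in full and simply remark that the $W^{p}(\text{\emph{curl}};\RR^3)$ case follows by replacing $\Div$ with $\curl$ throughout.
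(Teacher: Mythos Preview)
Your proof is correct and follows the standard truncation-plus-mollification route. The paper takes a different and much terser approach: it simply remarks that the $W^p(\text{div};\RR^3)$ and $W^p(\text{curl};\RR^3)$ norms are dominated by the $W^{1,p}(\RR^3)^3$ norm and then invokes the known density of $C^{\infty}_0(\RR^3)$ in $W^{1,p}(\RR^3)$. As written, that argument is incomplete: the norm comparison gives only a continuous embedding $W^{1,p}(\RR^3)^3 \hookrightarrow W^p(\text{div};\RR^3)$, so density of $C^{\infty}_0$ in $W^{1,p}$ yields density only in the $W^p(\text{div})$-closure of $W^{1,p}(\RR^3)^3$, not in all of $W^p(\text{div};\RR^3)$; one still needs precisely the mollification step you supply to show that an arbitrary $\uvec \in W^p(\text{div};\RR^3)$ can be approximated by something in $W^{1,p}$. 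Your direct argument avoids this gap, makes explicit where $p<\infty$ is used (convergence of mollifiers in $L^p$), and handles the support issue cleanly via the cutoff. The curl case is indeed identical with the product rule $\curl(\chi_R\uvec)=\chi_R\curl\uvec+\nabla\chi_R\times\uvec$ as you note.
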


\begin{proof}
It suffices to observe that the norms in $W^p(\text{div};\RR^3)$ and $W^{p}(\text{curl};\RR^3)$
are bounded by the norm in $W^{1,p}(\RR^3)$. The result then follows by the density of $C^{\infty}_0(\RR^3)$
in $W^{1,p}(\RR^3)$, Corollary 3.23 of \cite{Ada03}.
\end{proof}

\begin{lemma} \label{lemma:Green_weak}
For $1<p<\infty$ and any $\uvec \in W^p(\text{\emph{div}};\RR^3)$, $\phi \in C^{\infty}_0(\RR^3) $, we have
\begin{equation} \label{Green_div_smooth}
      \int_{\RR^3} \Div \uvec \, \phi \, d\xvec +  \int_{\RR^3} \uvec \, \nabla \phi \, d\xvec = 0.
\end{equation}
For any $\uvec \in W^p(\text{\emph{curl}};\RR^3)$ and $\phivec \in C^{\infty}_0(\RR^3)^3 $, we have
\begin{equation}  \label{Green_curl_smooth}
      \int_{\RR^3} \curl \uvec \, \phivec \, d\xvec -  \int_{\RR^3} \uvec \,  \curl \phivec \, d\xvec = 0.
\end{equation}
\end{lemma}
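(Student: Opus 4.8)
The plan is to combine the density statement of the preceding lemma with the classical divergence identity applied to smooth, compactly supported fields.

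First I would establish both identities in the case where $\uvec$ itself is smooth with compact support. If $\uvec \in C^{\infty}_0(\RR^3)^3$ and $\phi \in C^{\infty}_0(\RR^3)$, then $\phi \uvec \in C^{\infty}_0(\RR^3)^3$, so $\int_{\RR^3} \Div(\phi \uvec)\, d\xvec = 0$: indeed, componentwise each term is the integral over $\RR^3$ of $\partial_i(\phi u_i)$, the derivative of a compactly supported smooth function, hence zero. Expanding $\Div(\phi \uvec) = \phi\, \Div \uvec + \uvec \cdot \nabla \phi$ yields \eqref{Green_div_smooth}. Likewise, for $\uvec, \phivec \in C^{\infty}_0(\RR^3)^3$ the vector identity $\Div(\uvec \times \phivec) = \phivec \cdot \curl \uvec - \uvec \cdot \curl \phivec$ together with $\int_{\RR^3} \Div(\uvec \times \phivec)\, d\xvec = 0$ gives \eqref{Green_curl_smooth}.

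Next I would pass to the limit. Given $\uvec \in W^p(\text{\emph{div}};\RR^3)$ (resp. $W^p(\text{\emph{curl}};\RR^3)$), the previous lemma furnishes a sequence $\uvec_n \in C^{\infty}_0(\RR^3)^3$ with $\uvec_n \to \uvec$ in $L^p(\RR^3)^3$ and $\Div \uvec_n \to \Div \uvec$ (resp. $\curl \uvec_n \to \curl \uvec$) in $L^p(\RR^3)$. Since $\phi$, $\nabla \phi$, $\phivec$ and $\curl \phivec$ are all bounded with compact support, they belong to $L^q(\RR^3)$ with $1/p + 1/q = 1$; Hölder's inequality then shows that each of the four integrals in \eqref{Green_div_smooth}, \eqref{Green_curl_smooth} is continuous under these convergences, for instance $| \int_{\RR^3} (\Div \uvec_n - \Div \uvec)\,\phi\, d\xvec | \le \| \Div \uvec_n - \Div \uvec \|_{L^p} \| \phi \|_{L^q} \to 0$. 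Writing the smooth identities, valid for each $\uvec_n$, and letting $n \to \infty$ delivers \eqref{Green_div_smooth} and \eqref{Green_curl_smooth} for the general $\uvec$.

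I do not expect a genuine obstacle here; the only points requiring a little care are that the test object must be placed in $L^q(\RR^3)$ — which is where $C^{\infty}_0(\RR^3)$ sits, being bounded with compact support — so that Hölder applies, and that the approximating sequence must converge simultaneously in the $L^p$ norm of the field and of its divergence (resp. curl), which is precisely what convergence in the graph norm of $W^p(\text{\emph{div}};\RR^3)$ (resp. $W^p(\text{\emph{curl}};\RR^3)$) provides via the preceding lemma.
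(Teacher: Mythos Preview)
Your proposal is correct and follows exactly the approach the paper indicates: establish the identities for smooth compactly supported $\uvec$ via the classical divergence theorem, then pass to the limit using the density result of the preceding lemma together with H\"older's inequality. The paper's own proof is the one-line remark ``the result follows immediately by density using the classical Divergence Theorem,'' which your argument spells out in full.
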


\begin{proof} The result follows immediately by density using the classical Divergence Theorem.
\end{proof}


We now have all the tools to tackle the proof of the density of $C^{\infty}(\overline{\Omega})$ inside $W^p(\text{div};\Omega)$.
The main idea of the proof is that $C^{\infty}_0(\Omega)$ will always be dense in the dual of a "smooth" function space. The proof proceeds
by identifying this fact without having to obtain a complete characterization of the dual.

\begin{proof}[Proof of Theorem \ref{thm:densityWdiv}] The idea for this proof is taken from \cite{CosDau98}
where it was used to study a slightly different space. The techniques presented here will be reused 
in the proof of Theorem \ref{thm:densityWcurl}.
We will show that if $\ell \in \Wdiv'$ is identically zero over $C^{\infty}(\overline{\Omega})^3$
then $\ell \equiv 0$. By Theorem 3.5 of \cite{Rud91}, this suffices to show that the closure of $C^{\infty}(\overline{\Omega})^3$ equals $\Wdiv$.

Define $\Omega^{(4)}$ to be the union of four distinct copies of $\Omega$ and consider the obvious embedding
\begin{align*}
   I : \Wdiv & \longrightarrow L^p(\Omega^{(4)}) \\
        \uvec & \longmapsto (\uvec, \Div \uvec) .
\end{align*}
This embedding is an isometry onto it's image. Any linear functional $\ell$ over $\Wdiv$ defines
a linear functional over the closed image $I( \Wdiv ) $ and therefore, by the Hahn-Banach Theorem,
extends to a linear functional over all of $L^p(\Omega^{(4)})$. Since the dual of $L^p$ is $L^q$,  for 
any $\uvec \in \Wdiv $ this extension can be written in the form
$$
   \ell(\uvec) =  ( \uvec ,  \vvec )_{\Omega}  + ( \Div \uvec , w )_{\Omega}
$$
for some (non-unique) $\vvec \in L^q(\Omega)^3$ and $w \in L^q(\Omega)$. Let $E_0$ be the extension by
zero of any element in $L^q(\Omega)$ to $L^q(\RR^3)$ (Theorem 2.30 of \cite{Ada03}) and let $E$ be any extension operator
from $L^p(\Omega)$ to $L^p(\RR^3)$, then
$$
   \ell (\uvec) =  ( E \uvec , E_0 \vvec )_{\RR^3}  + ( \Div \uvec , w )_{\Omega} .
$$
Assuming that $\ell(\phivec) = 0$ for all $\phivec \in C^{\infty}(\overline{\Omega})^3$, then for all $\boldsymbol{\psi} \in C^{\infty}_0(\RR^3)^3$
we have
\begin{align*}
   ( \boldsymbol{\psi} , E_0 \vvec )_{\RR^3}  +  ( \Div \boldsymbol{\psi} , E_0 w )_{\RR^3}  = 0.
\end{align*}
In a distributional sense, we therefore have $E_0 \vvec = \nabla E_0 w$. This allows us to first recognize that
$E_0 w \in W^{1,q}(\RR^3)$. Moreover, both $E_0 w$ and $ \nabla E_0 w = E_0 \vvec$ vanish outside
of $\Omega$ and therefore by Theorem 5.29 of \cite{Ada03}, $ E_0 w \big|_{\Omega} = w \in W^{1,q}_0(\Omega) $.
This implies that there exists a sequence $w_n \in C^{\infty}_0(\Omega)$ converging to $w$ in the space
$W^{1,q}_0(\Omega)$.  Using identity \eqref{Green_div_smooth}, we can now conclude that for any $\uvec \in W^p(\text{div};\Omega)$
\begin{align*}
 \ell(\uvec) = &  ( E \uvec ,  E_0 \vvec )_{\RR^3}  +  ( E ( \Div \uvec ) , E_0 w )_{\RR^3}  \\
 = & \lim_{n \to \infty} ( E \uvec , \nabla w_n )_{\RR^3}  + ( E (\Div \uvec ), w_n )_{\RR^3} \\
 = & \lim_{n \to \infty}  ( \uvec , \nabla w_n )_{\Omega} + ( \Div \uvec , w_n )_{\Omega} = 0
\end{align*}
where the second to last step required that $w_n \in C^{\infty}_0(\Omega)$.
\end{proof}

The next step is to study the traces of functions in $\Wdiv$. For all $ \phivec \in C^{\infty}(\overline{\Omega})^3$, we can define 
the normal trace $\gamma_n(\phivec) = \phivec \cdot \normal$ where
we have assumed that the domain is sufficiently smooth to define the unit outwards normal $\normal$. 
The question is to know where this quantity will belong when we extend this to
the weaker space $\Wdiv$.

\begin{theorem} \label{thm:Green_div}
Let $\Omega$ be a bounded $C^1$ domain in $\RR^3$, unit outwards normal $\mathbf{n}$, and $1<p<\infty$. 
Then 
\begin{itemize}
\item[(i)] the mapping $\gamma_n$ defined on $C^{\infty}(\overline{\Omega})^3$ can be extended 
               to a continuous and surjective map $\gamma_n : \Wdivi
                         \longrightarrow  \Besov{1-1/q}{q}{q}' \equiv \Besov{-1/p}{p}{p} $.
\item[(ii)] the following Green's formula holds for all $\uvec \in \Wdivi $ and $v \in \Sobolev{1}{q}$,
\begin{equation}  \label{Green_div}
       ( \uvec, \nabla v) + (\Div \uvec, v) = \langle \gamma_n(\uvec),  \gamma_0( v )   \rangle.
\end{equation}
\end{itemize}
\end{theorem}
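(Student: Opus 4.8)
The plan is to follow the classical $L^2$ argument from Section 2.2 of \cite{GirRav86} (see also Chapter 3 of \cite{Mon03}), replacing the Hilbert-space self-duality by the $L^p$--$L^q$ and Besov duality pairings already set up in Section \ref{sec:regularity}. The starting point is the elementary Green identity for smooth fields: applying the classical divergence theorem to the product $\uvec\, v$ gives, for every $\uvec \in C^{\infty}(\overline\Omega)^3$ and $v \in C^{\infty}(\overline\Omega)$,
$$
   (\uvec,\nabla v) + (\Div\uvec, v) = \int_{\partial\Omega} (\uvec\cdot\normal)\, v \, d\boldsymbol{\sigma}.
$$
This is the identity we will bootstrap in two directions: first to control $\gamma_n$ in the correct Besov norm, and then to pass to the limit once $\gamma_n$ has been extended.

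First I would establish the continuity estimate for $\gamma_n$ on smooth fields. Recall that the target space is $\Besov{-1/p}{p}{p} \equiv (\Besov{1-1/q}{q}{q})'$ by the duality definition of negative-order Besov spaces, and that by Lemma \ref{lemma:trace_basic} (applied with exponent $q$) together with the open mapping theorem the trace $\gamma_0 : \Sobolev{1}{q} \to \Besov{1-1/q}{q}{q}$ admits a bounded linear right inverse $L$. Given $\mu \in \Besov{1-1/q}{q}{q}$, set $v = L\mu \in \Sobolev{1}{q}$ and use the smooth Green identity to write $\int_{\partial\Omega} (\uvec\cdot\normal)\,\mu\, d\boldsymbol{\sigma} = (\uvec,\nabla v) + (\Div\uvec, v)$, so that Hölder's inequality gives
$$
  \Big| \int_{\partial\Omega}(\uvec\cdot\normal)\,\mu\, d\boldsymbol{\sigma}\Big| \le \|\uvec\|_{\Wdiv}\,\|v\|_{\Sobolev{1}{q}} \le C\,\|\uvec\|_{\Wdiv}\,\|\mu\|_{\Besov{1-1/q}{q}{q}}.
$$
Taking the supremum over $\|\mu\|_{\Besov{1-1/q}{q}{q}} \le 1$ bounds $\|\gamma_n(\uvec)\|_{\Besov{-1/p}{p}{p}}$ by $C\,\|\uvec\|_{\Wdiv}$. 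Since $C^{\infty}(\overline\Omega)^3$ is dense in $\Wdiv$ by Theorem \ref{thm:densityWdiv}, $\gamma_n$ extends uniquely to a bounded operator on all of $\Wdivi$, proving part (i) except for surjectivity. The Green formula \eqref{Green_div} then follows for arbitrary $\uvec \in \Wdivi$ and $v \in \Sobolev{1}{q}$ by approximating $\uvec$ in $\Wdiv$ by smooth fields and $v$ in $\Sobolev{1}{q}$ by smooth functions, and passing to the limit: the left side is continuous in $(\uvec,v)$ by Hölder, and the right side is continuous by boundedness of $\gamma_n$ and $\gamma_0$ and the definition of the duality bracket.

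It remains to prove surjectivity of $\gamma_n$, which is where the elliptic theory enters and is the conceptual heart of the statement. Given $\mu \in \Besov{-1/p}{p}{p}$, apply Theorem \ref{thm:elliptic4} with right-hand side $f = 0 \in \Sobolev{1}{q}'$ to obtain $\phi \in \Sobolev{1}{p}$ solving $-\Delta\phi + \phi = 0$ in $\Omega$ and $\normal\cdot\nabla\phi = \mu$ on $\partial\Omega$ weakly. Set $\uvec = \nabla\phi$. Then $\uvec \in \Lp^3$, and since testing the weak formulation against $C_0^{\infty}(\Omega)$ shows $\Div\uvec = \Delta\phi = \phi$ in $\mathcal D'(\Omega)$, we get $\Div\uvec \in \Lp$, hence $\uvec \in \Wdivi$. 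Comparing the weak formulation of the Neumann problem, $(\nabla\phi,\nabla v) + (\phi, v) = \langle\mu,\gamma_0 v\rangle$ for all $v \in \Sobolev{1}{q}$, with the Green formula \eqref{Green_div} now available for $\uvec = \nabla\phi$, namely $(\nabla\phi,\nabla v) + (\phi, v) = \langle\gamma_n(\nabla\phi),\gamma_0 v\rangle$, yields $\langle\gamma_n(\nabla\phi) - \mu, \gamma_0 v\rangle = 0$ for all $v \in \Sobolev{1}{q}$; since $\gamma_0$ maps onto $\Besov{1-1/q}{q}{q}$, this forces $\gamma_n(\nabla\phi) = \mu$ in $\Besov{-1/p}{p}{p}$.

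I expect the continuity and extension step to be routine once the bounded right inverse of $\gamma_0$ is in hand, and the surjectivity step — essentially a clean invocation of the Neumann solvability of Theorem \ref{thm:elliptic4} — to be the main point, though not technically difficult given the machinery already assembled. The only subtlety to keep track of is the consistent identification of the various dual pairings over $\partial\Omega$: one must check that the boundary pairing implicit in the weak Neumann problem of Theorem \ref{thm:elliptic4} is literally the same bracket $\langle\cdot,\cdot\rangle$ between $\Besov{-1/p}{p}{p}$ and $\Besov{1-1/q}{q}{q}$ appearing in \eqref{Green_div}, which is immediate from the definitions but worth stating explicitly.
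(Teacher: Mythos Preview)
Your proof is correct and follows the same overall strategy as the paper: establish continuity of $\gamma_n$ on smooth fields via a boundary-lifting argument, extend by the density result of Theorem \ref{thm:densityWdiv}, and prove surjectivity by solving the Neumann problem of Theorem \ref{thm:elliptic4} and setting $\uvec = \nabla\phi$. The one substantive difference lies in the lifting used for the continuity estimate: the paper invokes the Dirichlet solvability of Theorem \ref{thm:elliptic1} to produce, for each $\mu \in \Besov{1-1/q}{q}{q}$, a $v \in \Sobolev{1}{q}$ solving $-\Delta v + v = 0$ with $\gamma_0(v) = \mu$ and $\|v\|_{\Sobolev{1}{q}} \le C\|\mu\|$, whereas you appeal directly to a bounded right inverse of the surjective trace $\gamma_0$. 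Your route is marginally more economical in that it confines the elliptic machinery to the surjectivity step, where it is genuinely indispensable; the paper's choice, on the other hand, keeps the argument self-contained within the elliptic theorems assembled in Section \ref{sec:regularity}. One small imprecision worth flagging: the open mapping theorem by itself yields only a bounded (generally nonlinear) selection $\mu \mapsto v$ with the required norm control, not a bounded \emph{linear} right inverse; for the latter you need either an explicit extension operator (standard for $C^1$ boundaries) or the fact that $\ker\gamma_0 = W^{1,q}_0(\Omega)$ is complemented in $\Sobolev{1}{q}$. Since your estimate never actually uses the linearity of $L$, this does not affect the argument.
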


\begin{proof} Given the elliptic regularity results in $L^p$, the proof is a natural adaptation of the argument used in \cite{Mon03} for the
proof of their Theorem 3.24.

We begin by examining Green's formula. For any $\phi \in C^{\infty}(\overline{\Omega})$ and 
$\boldsymbol{\psi} \in C^{\infty}(\overline{\Omega})^3$, we have
$$
         (\boldsymbol{\psi} , \nabla \phi) + (\Div \boldsymbol{\psi}, \phi) = \langle  \gamma_n(\boldsymbol{\psi}), \gamma_0( {\phi} ) \rangle .
$$
By density, this must hold for all $\phi \in W^{1,q}(\Omega) $. For $ v \in \Sobolev{1}{q}$,
using the previous identity as a weak definition of $\gamma_n(\boldsymbol{\psi})$, we therefore have
\begin{align*}
   \big| \langle  \gamma_n(\boldsymbol{\psi}), \gamma_0( v )  \rangle \big| 
    \leq & \| \boldsymbol{\psi} \|_{\Lp} \cdot \| \nabla v \|_{L^q(\Omega)} + \| \Div \boldsymbol{\psi} \|_{\Lp} \cdot \| v \|_{L^q(\Omega)} \\
    \leq &  \| \boldsymbol{\psi} \|_{\Wdiv} \cdot \|  v \|_{\Sobolev{1}{q}} , \qquad \forall  \boldsymbol{\psi} \in C^{\infty}(\Omega)^3.
\end{align*}
Pick any $\mu \in \Besov{1-1/q}{q}{q}$ and consider $v \in \Sobolev{1}{q}$ the unique weak solution to
$$
   - \Delta v + v = 0 \quad \text{on $\Omega$} \qquad \gamma_0(v) = \mu \quad \text{on $\partial \Omega$,}
$$
guaranteed by Theorem \ref{thm:elliptic1}. Using the fact that this solution depends continuously on the
boundary conditions,  estimate \eqref{estimate1}, we have the bound
$$
   \big| \langle \gamma_n(\boldsymbol{\psi}),  \mu  \rangle \big|  
          \leq  C  \| \boldsymbol{\psi} \|_{\Wdiv} \| \mu \|_{ \Besov{1-1/q}{q}{q}}  .
$$ 
According to the definition of the dual norm \eqref{dual_norm}, we may conclude that the map $\gamma_n$ can 
be extended by continuity to all of $\Wdiv$
with image in the dual of $ \Besov{1-1/q}{q}{q} $, also known as $\Besov{-1/p}{p}{p}$ 
(since $\partial \Omega$ is closed).
Green's formula \eqref{Green_div} follows immediately by density since $\gamma_0( v ) = \mu  \in \Besov{1-1/q}{q}{q}$.

To prove the surjectivity of $\gamma_n$, choose any $\eta \in \Besov{-1/p}{p}{p}$ and construct 
the unique weak solution $\phi$ to 
\begin{equation} \label{Green_div:eq1}
   - \Delta \phi + \phi = 0 \quad \text{on $\Omega$} \qquad \normal \cdot \nabla \phi = \eta \quad \text{on $\partial \Omega$.}
\end{equation}
By Theorem \ref{thm:elliptic4}, the solution $\phi$ exists and belongs to $\Sobolev{1}{p}$. If we define $\uvec = \nabla \phi \in \Lp^3$
then the weak form of \eqref{Green_div:eq1}  gives the identity
$$
         (\uvec, \nabla \psi) + (\phi, \psi) = 0, \qquad \forall \psi \in C^{\infty}_0(\Omega).
$$
In a distributional sense, we therefore have $\Div \uvec = \phi \in \Lp$ and $\uvec \in \Wdiv$. Moreover,
$\gamma_n(\uvec) = \normal \cdot \uvec = \normal \cdot \nabla \phi = \eta$ and the map is surjective.
\end{proof}

\begin{lemma}  \label{lemma:Wdiv2}
For a bounded $C^1$ domain $\Omega \subset \RR^3$ and $1<p<\infty$, we have
$$
   W^p_0(\text{\emph{div}};\Omega) = \Big\{ \uvec\in  \Wdivi \, \Big| \, \gamma_n(\uvec) = 0 \Big\} .
$$
\end{lemma}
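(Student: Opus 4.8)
The plan is to prove the two inclusions separately, the easy one by density and the hard one via Green's formula \eqref{Green_div}. For the inclusion $W^p_0(\operatorname{div};\Omega) \subseteq \{\uvec \in \Wdivi \mid \gamma_n(\uvec) = 0\}$, I would argue that $\gamma_n$ vanishes on $C^\infty_0(\Omega)^3$ by definition (the normal trace of a compactly supported smooth field is zero), and since $\gamma_n : \Wdivi \to \Besov{-1/p}{p}{p}$ is continuous by Theorem \ref{thm:Green_div}(i), it must vanish on the $\Wdivi$-closure of $C^\infty_0(\Omega)^3$, which is precisely $W^p_0(\operatorname{div};\Omega)$ by definition. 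This direction is routine.

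For the reverse inclusion, suppose $\uvec \in \Wdivi$ with $\gamma_n(\uvec) = 0$; I must show $\uvec$ lies in the closure of $C^\infty_0(\Omega)^3$. The standard approach (mirroring Monk's Theorem 3.26) is to use a duality/extension argument analogous to the proof of Theorem \ref{thm:densityWdiv}: I would show that any $\ell \in \Wdiv'$ vanishing on $W^p_0(\operatorname{div};\Omega)$ also vanishes at $\uvec$. Using Hahn--Banach and the isometric embedding $I : \Wdiv \hookrightarrow L^p(\Omega^{(4)})$ as in that earlier proof, write $\ell(\uvec) = (\uvec,\vvec)_\Omega + (\Div \uvec, w)_\Omega$ for some $\vvec \in L^q(\Omega)^3$, $w \in L^q(\Omega)$. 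Testing against $\boldsymbol\psi \in C^\infty_0(\Omega)^3 \subset W^p_0(\operatorname{div};\Omega)$ and using \eqref{Green_div_smooth} gives $\vvec = \nabla w$ in the distributional sense \emph{on $\Omega$}, so $w \in W^{1,q}(\Omega)$ (not merely $W^{1,q}_0$, since we no longer have vanishing outside $\Omega$ as in Theorem \ref{thm:densityWdiv}). Now the Green's formula \eqref{Green_div} applies: for $\uvec \in \Wdivi$ and $v = w \in W^{1,q}(\Omega)$,
\[
     \ell(\uvec) = (\uvec, \nabla w)_\Omega + (\Div \uvec, w)_\Omega = \langle \gamma_n(\uvec), \gamma_0(w) \rangle = 0,
\]
because $\gamma_n(\uvec) = 0$ by hypothesis. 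Hence $\ell(\uvec) = 0$ for every such $\ell$, and by the Hahn--Banach separation theorem (Theorem 3.5 of \cite{Rud91}, as invoked before) $\uvec \in W^p_0(\operatorname{div};\Omega)$.

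The main obstacle will be verifying rigorously that $\vvec = \nabla w$ holds only in the interior distributional sense and correctly identifying the regularity $w \in W^{1,q}(\Omega)$, so that Green's formula \eqref{Green_div} — which requires the test function in $\Sobolev{1}{q}$, and $q$ is conjugate to $p$ — is legitimately applicable. Unlike the proof of Theorem \ref{thm:densityWdiv}, there is no extension-by-zero producing a global Sobolev function, so one cannot conclude $w \in W^{1,q}_0(\Omega)$; the whole point is that the boundary term in Green's formula is killed by $\gamma_n(\uvec) = 0$ rather than by $\gamma_0(w) = 0$. One should also double-check that $C^\infty_0(\Omega)^3$ densely exhausts $W^p_0(\operatorname{div};\Omega)$ for the testing step and that the pairing $\langle \gamma_n(\uvec), \gamma_0(w)\rangle$ makes sense with $\gamma_0(w) \in \Besov{1-1/q}{q}{q}$ by Lemma \ref{lemma:trace_basic} — both are immediate from the definitions and Theorem \ref{thm:Green_div}.
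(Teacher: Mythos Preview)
Your proposal is correct and follows essentially the same route as the paper: the easy inclusion by continuity of $\gamma_n$, and the hard inclusion by representing an annihilating functional as $\ell(\cdot)=(\cdot,\vvec)+(\Div\cdot,w)$, deducing $\vvec=\nabla w$ with $w\in W^{1,q}(\Omega)$ from testing on $C^\infty_0(\Omega)^3$, and then applying Green's formula \eqref{Green_div} so that $\gamma_n(\uvec)=0$ kills the boundary term. Your remark that here one obtains only $w\in W^{1,q}(\Omega)$ (not $W^{1,q}_0$), in contrast to the proof of Theorem~\ref{thm:densityWdiv}, is exactly the point the paper makes.
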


\begin{proof}The proof given here is a non-trivial adaptation of the argument used in \cite{Mon03}, where duality was simpler 
to handle because Hilbert spaces were involved. The trace $\gamma_n$ clearly vanishes on $C^{\infty}_0(\Omega)^3$
and therefore, by density,  $\gamma_n$ vanishes on $W^p_0(\text{div};\Omega)$. This proves the inclusion of 
$W^p_0(\text{div};\Omega)$ into the set on the right.  The other inclusion 
can be demonstrated if we show that when $\ell \in \Wdiv'$ vanishes on $C^{\infty}_0(\Omega)$, then $\ell$ must also vanish
on any $\uvec \in \Wdiv$ for which $\gamma_n(\uvec) = 0$. 

Repeating the trick used in the proof of Theorem \ref{thm:densityWdiv}, we exploit the embedding of
$\Wdiv$ into $L^p(\Omega^{(4)})$ and the induced representation of every $\ell \in \Wdiv'$ as
$$
   \ell(\boldsymbol{\psi}) = (\boldsymbol{\psi}, \vvec) + (\Div \boldsymbol{\psi}, w)
$$
for some $\vvec \in L^q(\Omega)^3$ and $w \in L^q(\Omega)$. For all $\boldsymbol{\phi} \in C^{\infty}_0(\Omega)^3$, 
$\ell(\boldsymbol{\phi}) = 0$ and therefore $\vvec = \nabla w$ in a distributional sense. This implies that $w \in \Sobolev{1}{q}$
and that $\ell$ over $\Wdiv$ can be written as 
\begin{align*}
  \ell(\boldsymbol{\psi}) = & (\boldsymbol{\psi}, \nabla w) + (\Div \boldsymbol{\psi}, w) = \langle \gamma_n(\boldsymbol{\psi}) , \gamma_0( w ) \rangle.
\end{align*}
If $\uvec \in \Wdiv$ is such that $\gamma_n(\uvec) = 0$ then $\ell(\uvec) = 0$ and $\uvec$ belongs to the closure of $C^{\infty}_0(\Omega)^3$.
\end{proof}


We now turn to the set of spaces $\Wcurl$ and $W^p_0(\text{curl};\Omega)$. The analysis is very similar to the
one used for $\Wdiv$  and in fact, a suitable generalization of the differential operators involved here would have allowed us to treat both cases
simultaneously, as Mitrea and Mitrea did in \cite{MitMit02}. The objective is to prove Green's formula over these spaces and, in
particular, to characterize the traces of these spaces. In the case of $\Wcurl$, we will show that the tangential trace operators,
defined for all $ \phivec \in C^{\infty}(\overline{\Omega})^3$ as $\gamma_t(\phivec) = \normal \times \phivec $ and 
$\gamma_T(\phivec) = (\normal \times \phivec) \times \normal$, are well-defined. We begin with a simple extension of
Theorem 5.29 from \cite{Ada03}.

\begin{lemma}  \label{lemma:zero_extension} 
Assume $\Omega$ is a bounded Lipschitz domain, $1<p<\infty$ and $E_0 u  $ is the extension by zero of $u$ to all
of $\RR^3$. Then $u$ belongs to $W^p_0(\text{\emph{curl}};\Omega)$ if and only if $E_0 u  $ belongs to 
$W^p(\text{\emph{curl}};\RR^3)$. 
\end{lemma}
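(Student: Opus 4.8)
The plan is to prove the two implications separately, using the analogous result for the divergence operator, Theorem 5.29 of \cite{Ada03} (which handles $W^{1,p}$-type zero extensions), together with the distributional definition of the curl and a characterization of $W^p_0(\text{curl};\Omega)$. The easy direction is that if $u \in W^p_0(\text{curl};\Omega)$ then $E_0 u \in W^p(\text{curl};\RR^3)$: choose $\boldsymbol{\phi}_n \in C^\infty_0(\Omega)^3$ converging to $u$ in the $\Wcurl$ norm, observe that $E_0 \boldsymbol{\phi}_n = \boldsymbol{\phi}_n$ viewed in $C^\infty_0(\RR^3)^3$ and that $E_0(\curl \boldsymbol{\phi}_n) = \curl(E_0 \boldsymbol{\phi}_n)$ since the functions are compactly supported inside $\Omega$, and then pass to the limit: $E_0$ is continuous from $L^p(\Omega)^3$ to $L^p(\RR^3)^3$, so $E_0 \boldsymbol{\phi}_n \to E_0 u$ and $E_0(\curl \boldsymbol{\phi}_n) = \curl(E_0 \boldsymbol{\phi}_n) \to E_0(\curl u)$ in $L^p(\RR^3)^3$; since $\curl$ is closed as an operator on distributions, the limit identity $\curl(E_0 u) = E_0(\curl u) \in L^p(\RR^3)^3$ follows.

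For the converse, suppose $E_0 u \in W^p(\text{curl};\RR^3)$. First I would verify the distributional identity $\curl(E_0 u) = E_0(\curl u)$, i.e. that no singular (surface) contribution appears along $\partial\Omega$: testing against $\boldsymbol{\psi} \in C^\infty_0(\RR^3)^3$ and splitting the integral over $\Omega$ and $\RR^3 \setminus \overline\Omega$, integration by parts on each piece using Green's formula \eqref{Green_curl_smooth} shows the boundary terms must cancel, which forces $\gamma_t(u) = 0$ in the appropriate trace space — here one uses that the tangential trace $\gamma_t : \Wcurl \to$ (the relevant Besov dual) is well-defined (the curl-analogue of Theorem \ref{thm:Green_div}, which the paper proves as Theorem \ref{thm:densityWcurl} and its companion). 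Then I would invoke the curl-analogue of Lemma \ref{lemma:Wdiv2}, namely that $W^p_0(\text{curl};\Omega) = \{ u \in \Wcurl \mid \gamma_t(u) = 0 \}$, to conclude $u \in W^p_0(\text{curl};\Omega)$.

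The main obstacle is a sequencing/logical-dependency issue rather than a deep analytic one: the clean proof of the converse wants the tangential-trace theorem and the $\gamma_t$-vanishing characterization of $W^p_0(\text{curl};\Omega)$, but in the paper's organization Lemma \ref{lemma:zero_extension} is stated \emph{before} those results (it is presented as a tool used in proving the density Theorem \ref{thm:densityWcurl}). So the honest approach is to give a self-contained argument that does not circularly depend on the trace machinery. The way to do this is to mimic the proof of the analogous extension fact for $W^{1,p}$ in \cite{Ada03}: work directly with the distributional curl and a localization/reflection argument near the boundary. Concretely, flatten the boundary locally via the $C^1$ chart, so that $\Omega$ becomes a half-space $\{x_3 > 0\}$ near a boundary point; the hypothesis $\curl(E_0 u) \in L^p$ means that when we integrate $\curl(E_0 u)\cdot\boldsymbol{\psi}$ over $\RR^3$ there is no jump across $\{x_3 = 0\}$, which means precisely the tangential components $u_1, u_2$ (after transport through the chart, the components corresponding to $\normal \times u$) have no jump, i.e. their zero-extensions have no singular derivative in the $x_3$-direction. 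This is exactly the situation of Theorem 5.29 of \cite{Ada03} applied componentwise to the combination of derivatives appearing in the curl, and a partition of unity glues the local statements together.

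An alternative, and probably cleaner, route that avoids charts entirely: reduce directly to the scalar $W^{1,p}_0$ statement by the same ``$\Omega^{(N)}$ embedding + Hahn--Banach'' trick used in the proof of Theorem \ref{thm:densityWdiv}. Given that $E_0 u \in \Wcurli(\RR^3)$, take any $\ell \in \Wcurl'$ vanishing on $C^\infty_0(\Omega)^3$; represent it via the isometric embedding into $L^p(\Omega^{(4)})$ as $\ell(\boldsymbol{\psi}) = (\boldsymbol{\psi},\vvec) + (\curl\boldsymbol{\psi},\wvec)$ with $\vvec,\wvec \in L^q(\Omega)^3$; vanishing on $C^\infty_0(\Omega)^3$ forces $\vvec = \curl \wvec$ distributionally, hence $\wvec \in W^q(\text{curl};\Omega)$, and one checks using \eqref{Green_curl_smooth} and the hypothesis $E_0 u \in \Wcurli(\RR^3)$ that $\ell(u) = 0$; by the Hahn--Banach separation (Theorem 3.5 of \cite{Rud91}) this places $u$ in the closure of $C^\infty_0(\Omega)^3$, i.e. $u \in W^p_0(\text{curl};\Omega)$. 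I would present this second argument as the main proof, since it keeps the exposition parallel to the $\Wdiv$ case and sidesteps both the chart bookkeeping and the forward reference to the trace theorem; the only genuinely delicate point to check is that the hypothesis on $E_0 u$ is exactly what makes the pairing $\langle \curl(E_0 u), E_0\wvec\rangle_{\RR^3}$ reduce to the interior pairing with the boundary term absent, which is where I would spend the actual care.
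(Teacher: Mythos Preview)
Your forward direction is fine and standard. The paper's proof of the converse is a single sentence: the argument for Theorem~5.29 in \cite{Ada03} (the $W^{1,p}_0$ zero-extension characterization) carries over verbatim to $\Wcurl$. That Adams proof is precisely the localization/translation/mollification route you sketch and then set aside: write $\partial\Omega$ locally as a Lipschitz graph, translate $E_0 u$ inward so its support lies compactly in $\Omega$, mollify, and use continuity of translation in $L^p$ together with the fact that $\curl$ commutes with translation and convolution. No trace machinery is needed, and the argument works under the Lipschitz hypothesis actually assumed in the lemma (your mention of a $C^1$ chart is stronger than necessary).

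The Hahn--Banach route you prefer has a genuine circularity. After writing $\ell(\boldsymbol{\psi}) = (\boldsymbol{\psi},\vvec) + (\curl\boldsymbol{\psi},\mathbf{w})$ and deducing $\vvec = -\curl\mathbf{w}$ on $\Omega$, you obtain only $\mathbf{w}\in W^q(\text{curl};\Omega)$, \emph{not} $\mathbf{w}\in W^q_0(\text{curl};\Omega)$. To conclude
\[
   \ell(u) = -(u,\curl\mathbf{w})_\Omega + (\curl u,\mathbf{w})_\Omega = 0
\]
from the hypothesis $E_0 u\in W^p(\text{curl};\RR^3)$ via \eqref{Green_curl_smooth}, you must pass to $\RR^3$ and integrate by parts there, which requires $E_0\mathbf{w}\in W^q(\text{curl};\RR^3)$, i.e.\ that $\curl(E_0\mathbf{w})$ carry no surface layer on $\partial\Omega$. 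That is exactly the statement of the lemma you are proving, applied to $\mathbf{w}$ in the conjugate space. Contrast this with the proof of Theorem~\ref{thm:densityWcurl}: there $\ell$ vanishes on all of $C^\infty(\overline\Omega)^3$, so testing against restrictions of $C^\infty_0(\RR^3)^3$ gives $E_0\vvec = -\curl(E_0\mathbf{w})$ globally on $\RR^3$, and only \emph{then} is the present lemma invoked to deduce $\mathbf{w}\in W^q_0(\text{curl};\Omega)$. In your setting $\ell$ only vanishes on $C^\infty_0(\Omega)^3$, which gives no control of $E_0\mathbf{w}$ across $\partial\Omega$. The ``delicate point'' you flag is not merely delicate --- it is the lemma itself in disguise. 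Go with the direct Adams-style construction instead.
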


\begin{proof}
Theorem 5.29 in \cite{Ada03} is stated only for $\Sobolev{s}{p}$ but the same proof shows that
this result also holds for $\Wcurl$.
\end{proof}

\begin{theorem} \label{thm:densityWcurl}
Consider a bounded Lipschitz domain $\Omega \subset \RR^3$ and $1 < p <\infty$.
Then the set $C^{\infty}(\overline{\Omega})^3$ is dense in  $\Wcurli$.
\end{theorem}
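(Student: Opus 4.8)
The plan is to adapt, essentially verbatim, the duality argument already used for Theorem~\ref{thm:densityWdiv}, replacing $\Div$ by $\curl$ throughout and replacing Theorem 5.29 of \cite{Ada03} by Lemma~\ref{lemma:zero_extension}. By Theorem 3.5 of \cite{Rud91} it suffices to show that every $\ell \in \Wcurl'$ that vanishes on the subspace $C^{\infty}(\overline{\Omega})^3$ is identically zero. First I would introduce $\Omega^{(6)}$, the disjoint union of six copies of $\Omega$, and the isometric embedding $I : \Wcurl \longrightarrow L^p(\Omega^{(6)})$, $\uvec \mapsto (\uvec, \curl \uvec)$. Since $\Wcurl$ is complete, $I(\Wcurl)$ is a closed subspace, so the Hahn--Banach theorem extends $\ell$ to a functional on $L^p(\Omega^{(6)})$, and the Riesz representation of $(L^p)'$ produces $\vvec, \mathbf{w} \in L^q(\Omega)^3$ with $\ell(\uvec) = (\uvec, \vvec)_{\Omega} + (\curl \uvec, \mathbf{w})_{\Omega}$ for all $\uvec \in \Wcurl$.

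Next, let $E_0$ denote extension by zero from $L^q(\Omega)$ to $L^q(\RR^3)$ and let $E$ be any bounded extension operator from $L^p(\Omega)$ to $L^p(\RR^3)$. Testing $\ell$ against the restrictions to $\Omega$ of fields $\boldsymbol{\psi} \in C^{\infty}_0(\RR^3)^3$, which lie in $C^{\infty}(\overline{\Omega})^3$ and are therefore annihilated by $\ell$, and using that $E$ restricts to the identity on $\Omega$ while $E_0 \vvec$ and $E_0 \mathbf{w}$ are supported in $\overline{\Omega}$, one obtains $(\boldsymbol{\psi}, E_0 \vvec)_{\RR^3} + (\curl \boldsymbol{\psi}, E_0 \mathbf{w})_{\RR^3} = 0$ for every such $\boldsymbol{\psi}$. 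Because $\curl$ is formally self-adjoint under integration by parts against $C^{\infty}_0(\RR^3)^3$, this identity says precisely that $\curl(E_0 \mathbf{w}) = - E_0 \vvec$ in the sense of distributions; since the right-hand side lies in $L^q(\RR^3)^3$, we conclude $E_0 \mathbf{w} \in W^q(\text{curl};\RR^3)$. As $E_0 \mathbf{w}$ vanishes outside $\overline{\Omega}$, Lemma~\ref{lemma:zero_extension}, applied with the exponent $q \in (1,\infty)$, gives $\mathbf{w} = (E_0 \mathbf{w})|_{\Omega} \in W^q_0(\text{curl};\Omega)$, so there is a sequence $\mathbf{w}_n \in C^{\infty}_0(\Omega)^3$ with $\mathbf{w}_n \to \mathbf{w}$ in $W^q(\text{curl};\Omega)$.

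Finally, for an arbitrary $\uvec \in \Wcurl$ I would invoke the smooth Green identity \eqref{Green_curl_smooth} together with the compact support of each $\mathbf{w}_n$ in $\Omega$, which makes the $\RR^3$-integrals collapse to integrals over $\Omega$ where $E\uvec = \uvec$ and which yields $(\curl \uvec, \mathbf{w}_n)_{\Omega} = (\uvec, \curl \mathbf{w}_n)_{\Omega}$. Using also that $\vvec = -\curl \mathbf{w}$ on $\Omega$, a passage to the limit then gives $\ell(\uvec) = (\uvec, \vvec)_{\Omega} + (\curl \uvec, \mathbf{w})_{\Omega} = \lim_{n \to \infty} \big[ -(\uvec, \curl \mathbf{w}_n)_{\Omega} + (\curl \uvec, \mathbf{w}_n)_{\Omega} \big] = 0$, so $\ell \equiv 0$ and the theorem follows. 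I do not anticipate a genuine obstacle: the only ingredient not already available from the $\Wdiv$ analysis is the curl-analogue of the zero-extension theorem, and that is exactly Lemma~\ref{lemma:zero_extension}. The one point that requires care is justifying the passage to the distributional identity $\curl(E_0\mathbf{w}) = -E_0\vvec$ on all of $\RR^3$ and noting that, once $E_0\vvec$ has been identified as $-\curl(E_0\mathbf{w})$, this is an honest equality of $L^q$ functions rather than merely of distributions.
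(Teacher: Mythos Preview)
Your proposal is correct and follows essentially the same approach as the paper's own proof: the same isometric embedding into $L^p(\Omega^{(6)})$, the same Hahn--Banach representation, the same distributional identification $E_0\vvec = -\curl(E_0\mathbf{w})$, and the same appeal to Lemma~\ref{lemma:zero_extension} followed by approximation with $\mathbf{w}_n \in C^{\infty}_0(\Omega)^3$. The final vanishing step in the paper is written via the $\RR^3$-integrals and \eqref{Green_curl_smooth} just as you outline; your additional remarks about the closedness of $I(\Wcurl)$ and the distributional-versus-$L^q$ identification are helpful elaborations but do not depart from the paper's argument.
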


\begin{proof}
We will show that if $\ell \in \Wcurl'$ vanishes over $C^{\infty}(\overline{\Omega})^3$ then $\ell$ must be
identically zero.

As we did in the proof of Theorem \ref{thm:densityWdiv}, there exists an isometric  
embedding $I: W^p(\text{curl};\Omega) \longrightarrow L^p(\Omega^{(6)})$, $I(\uvec) = (\uvec, \curl \uvec)$.
Given any linear functional over $I(W^p(\text{curl};\Omega)) \subset L^p(\Omega^{(6)})$, the Hahn-Banach Theorem states that
can it be extended to a linear functional over $L^p(\Omega^{(6)})$, that is to an element in $L^q(\Omega^{(6)})$.  The
linear functional can therefore be represented as
$$
   \ell( \uvec) = ( \uvec, \vvec )_{\Omega} + (\curl \uvec, \mathbf{w})_{\Omega}, \quad \forall \uvec \in \Wcurl, 
$$  
for some $\vvec, \mathbf{w} \in L^q(\Omega)^3$. 

Let $E_0$ be the extension by zero operator and let $E$ be any extension operator over $\Lp^3$. Then
$$
      \ell(\uvec) = (E \uvec, E_0 \vvec)_{\RR^3} + (E (\curl \uvec), E_0 \mathbf{w})_{\RR^3}.
$$
For any $\boldsymbol{\psi} \in C^{\infty}_0(\RR^3)^3$, Lemma \ref{lemma:Green_weak} implies that
$$
       (\boldsymbol{\psi}, E_0 \vvec)_{\RR^3} + ( \curl \boldsymbol{\psi}, E_0 \mathbf{w})_{\RR^3} = 0.
$$
In other words, $E_0 \vvec = - \curl E_0 \mathbf{w}$ in a distributional sense. Not only does this imply that
$E_0 \mathbf{w} \in W^q(\text{curl};\RR^3)$, but Lemma \ref{lemma:zero_extension} shows that 
$\mathbf{w} \in W^q_0(\text{curl};\Omega)$. 

We can therefore construct a sequence of $\mathbf{w}_n \in C^{\infty}_0(\Omega)^3$ which converges to $\mathbf{w}$
with respect to the norm in $W^q(\text{div};\Omega)$. For any $\uvec \in W^p(\text{div};\Omega)$, we 
combine these facts with formula \eqref{Green_curl_smooth} to deduce
\begin{align*}
 \ell(\uvec) = &  ( E \uvec , E_0 \vvec)_{\RR^3} +  ( E ( \curl \uvec ) , E_0 \mathbf{w} )_{\RR^3} \\
 = & \lim_{n \to \infty} - ( E \uvec , \curl \mathbf{w}_n)_{\RR^3} +  ( E ( \curl \uvec ),  \mathbf{w}_n )_{\RR^3} \\
 = & \lim_{n \to \infty}  - ( \uvec , \curl \mathbf{w}_n)_{\Omega} +  ( \curl \uvec  , \mathbf{w}_n )_{\Omega} = 0 .
\end{align*}
This shows that the closure of $C^{\infty}(\overline{\Omega})^3$ must be $W^p(\text{curl};\Omega).$
\end{proof}

\begin{theorem}  \label{thm:Green_curl}
Consider a bounded $C^1$ domain $\Omega \subset \RR^3$ and $1<p<\infty$. Then 
\begin{itemize}
\item[(i)] the mapping $\gamma_t$ defined on $C^{\infty}(\overline{\Omega})$ can be extended  
               to a continuous  map $\gamma_t : \Wcurli 
                         \longrightarrow  (\Besov{1-1/q}{q}{q}^3)' \equiv \Besov{-1/p}{p}{p}^3 $.
\item[(ii)] the following Green's formula holds for all $\uvec \in \Wcurli $ and $\vvec \in \Sobolev{1}{q}^3$,
\begin{equation}  \label{Green_curl1}
       ( \curl  \uvec, \vvec) - ( \uvec, \curl \vvec) = \langle \gamma_t(\uvec),  \gamma_0( \vvec ) \rangle .
\end{equation}
\end{itemize}
\end{theorem}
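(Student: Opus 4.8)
The plan is to mirror, essentially verbatim, the structure of the proof of Theorem~\ref{thm:Green_div}, replacing the divergence by the curl and the normal trace by the tangential trace $\gamma_t$. First I would establish Green's formula \eqref{Green_curl1} on smooth fields: for $\boldsymbol{\psi},\boldsymbol{\phi}\in C^{\infty}(\overline{\Omega})^3$ the classical integration-by-parts identity $(\curl\boldsymbol{\psi},\boldsymbol{\phi})-(\boldsymbol{\psi},\curl\boldsymbol{\phi})=\langle\normal\times\boldsymbol{\psi},\gamma_0(\boldsymbol{\phi})\rangle$ holds, and by the density of $C^{\infty}(\overline{\Omega})^3$ in $W^{1,q}(\Omega)^3$ it extends to all $\vvec\in W^{1,q}(\Omega)^3$ on the right. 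This gives the weak-definition bound
\begin{align*}
\big|\langle\gamma_t(\boldsymbol{\psi}),\gamma_0(\vvec)\rangle\big|
 &\le \|\boldsymbol{\psi}\|_{\Lp}\,\|\curl\vvec\|_{L^q(\Omega)}+\|\curl\boldsymbol{\psi}\|_{\Lp}\,\|\vvec\|_{L^q(\Omega)}\\
 &\le \|\boldsymbol{\psi}\|_{\Wcurl}\,\|\vvec\|_{\Sobolev{1}{q}^3}.
\end{align*}

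Next, to convert this into a bound in the Besov dual norm, I would, for a given boundary datum $\boldsymbol{\mu}\in\Besov{1-1/q}{q}{q}^3$, lift it componentwise to a field $\vvec\in W^{1,q}(\Omega)^3$ solving the three decoupled scalar problems $-\Delta v_i+v_i=0$ in $\Omega$, $\gamma_0(v_i)=\mu_i$ on $\partial\Omega$, furnished by Theorem~\ref{thm:elliptic1}, together with the continuity estimate \eqref{estimate1} so that $\|\vvec\|_{\Sobolev{1}{q}^3}\le C\|\boldsymbol{\mu}\|_{\Besov{1-1/q}{q}{q}^3}$. Substituting into the previous inequality yields $\big|\langle\gamma_t(\boldsymbol{\psi}),\boldsymbol{\mu}\rangle\big|\le C\|\boldsymbol{\psi}\|_{\Wcurl}\,\|\boldsymbol{\mu}\|_{\Besov{1-1/q}{q}{q}^3}$ for all smooth $\boldsymbol{\psi}$, which by the definition of the dual norm \eqref{dual_norm} and the density of $C^{\infty}(\overline{\Omega})^3$ in $\Wcurl$ (Theorem~\ref{thm:densityWcurl}) shows that $\gamma_t$ extends continuously to $\gamma_t:\Wcurl\to(\Besov{1-1/q}{q}{q}^3)'\equiv\Besov{-1/p}{p}{p}^3$. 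Green's formula \eqref{Green_curl1} for arbitrary $\uvec\in\Wcurl$ then follows by approximating $\uvec$ by smooth fields, since every $\vvec\in W^{1,q}(\Omega)^3$ has $\gamma_0(\vvec)$ in the predual space.

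The one genuine subtlety — and the step I expect to be the main obstacle — is the precise identification of the target space as $(\Besov{1-1/q}{q}{q}^3)'$ and the status of the duality pairing $\langle\cdot,\cdot\rangle$ appearing in \eqref{Green_curl1}: the pairing is really between $\gamma_t(\uvec)$ living in that dual and the restriction of $\gamma_0(\vvec)$ to the image of $\gamma_0$, so one must invoke the surjectivity of $\gamma_0:W^{1,q}(\Omega)^3\to\Besov{1-1/q}{q}{q}^3$ (Lemma~\ref{lemma:trace_basic}) to know that the test data $\boldsymbol{\mu}$ do in fact range over all of $\Besov{1-1/q}{q}{q}^3$, and that the value $\langle\gamma_t(\boldsymbol{\psi}),\boldsymbol{\mu}\rangle$ is independent of which lifting $\vvec$ of $\boldsymbol{\mu}$ is chosen (independence follows from the smooth Green's formula, since any two liftings differ by an element of $W^{1,q}_0(\Omega)^3$, on which the pairing vanishes). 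Note that, unlike Theorem~\ref{thm:Green_div}, I would not claim surjectivity of $\gamma_t$ here — the statement only asserts continuity — which is consistent with the fact that the natural surjective tangential trace has image in a more delicate space; so the proof can stop once continuity and \eqref{Green_curl1} are in hand, and no analogue of the potential-theoretic construction at the end of the proof of Theorem~\ref{thm:Green_div} is needed.
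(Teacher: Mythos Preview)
Your proposal is correct and follows essentially the same route as the paper: start from the smooth Green's formula, extend the test function to $W^{1,q}(\Omega)^3$ by density, lift an arbitrary boundary datum $\boldsymbol{\mu}\in\Besov{1-1/q}{q}{q}^3$ via the Dirichlet problem of Theorem~\ref{thm:elliptic1} to obtain the dual-norm bound, and then extend $\gamma_t$ by the density of $C^{\infty}(\overline{\Omega})^3$ in $\Wcurl$. Your added remarks on the surjectivity of $\gamma_0$, the lifting-independence of the pairing, and the absence of a surjectivity claim for $\gamma_t$ are all apt and make explicit points the paper leaves implicit.
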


\begin{proof} 
The proof is a straightforward extension of the one used in \cite{Mon03}.
For any  $\phivec, \boldsymbol{\psi} \in C^{\infty}(\overline{\Omega})^3$, we have
\be  \label{Green_curl:eq1}
         ( \curl \phivec, \boldsymbol{\psi})  - ( \phivec, \curl \boldsymbol{\psi}) = 
               \langle \gamma_t( \phivec ), \gamma_0( \boldsymbol{\psi} ) \rangle.
\ee
By the density of $C^{\infty}(\overline{\Omega})^3$ in $W^{1,q}(\Omega)^3$, this formula holds for
all $ \boldsymbol{\psi} \in \Sobolev{1}{q}^3$. For any $\mu \in \Besov{1-1/q}{q}{q}^3$, let
$v \in W^{1,q}(\Omega)$ be the weak solution to $-\Delta v + v =0,$ $\gamma_0(v)  = \mu$, as provided by Theorem \ref{thm:elliptic1}.
Using \eqref{Green_curl:eq1} and estimate \eqref{estimate1}, for all $\boldsymbol{\phi}\in C^{\infty}(\overline{\Omega})^3$ we have 
$$
  \big| \langle \gamma_t(\phivec), \gamma_0( \mu ) \rangle \big|
          \leq \| \phivec \|_{\Wcurl} \cdot \| v \|_{\Sobolev{1}{q}}   
          \leq  C \| \phivec \|_{\Wcurl} \| \mu \|_{\Besov{1-1/q}{q}{q}}   .
$$
According to the definition of the dual norm \eqref{dual_norm}, the previous estimate 
implies that $\gamma_t: \Wcurl  \longrightarrow  (\Besov{1-1/q}{q}{q}^3)' = \Besov{-1/p}{p}{p}^3$ 
can be defined by continuous  extension and that 
$$
   \| \gamma_t (\phivec) \|_{\Besov{-1/p}{p}{p}} \leq C \| \phivec \|_{\Wcurl}.
$$           
Since $C^{\infty}(\overline{\Omega})^3$ is dense in $\Wcurl$, we conclude that the identity
\eqref{Green_curl:eq1} continues to hold for all $\uvec\in \Wcurl$ and $ \boldsymbol{\psi} \in \Sobolev{1}{q}^3$.
\end{proof}

An intrinsic definition of $\gamma_t(\Wcurl)$ exists and is described in Mitrea \cite{Mitrea}. For our purposes, a simpler approach 
will be sufficient. Writing $\Yp = \gamma_t(\Wcurl)$, we define the following norm on the image
\begin{equation}    \label{defn:norm_Yp}
      \| \mathbf{w} \|_{\Yp} = \inf \left\{  \| \uvec \|_{\Wcurl} \, \big|  \, \uvec \in \Wcurl \text{ and } \gamma_t(\uvec) = \mathbf{w} \right\} . 
\end{equation}
Although the previous Green's formula \eqref{Green_curl1} used $\gamma_0$, we will be using the more precise expression $\gamma_T$
since, for smooth functions, only the tangential component of $\vvec$ appears in the inner product and
$\langle \gamma_t(\uvec),\gamma_0(\vvec)\rangle = \langle \gamma_t(\uvec),\gamma_T(\vvec) \rangle$.

\begin{theorem}  \label{thm:Green_curl2}
Consider a bounded $C^1$ domain $\Omega$ and $1<p<\infty$. Then 
$\Yp$ is a normed topological vector space, the map $\gamma_t: \Wcurli \longrightarrow Y^p(\partial \Omega) $ is continuous
and surjective and 
$\gamma_T: W^q(\text{\emph{curl}};\Omega) \longrightarrow Y^p(\partial \Omega)' $ is well-defined and continuous.
Moreover, for all $\uvec \in \Wcurli $, $\vvec \in W^q(\text{\emph{curl}};\Omega)$, we have
\begin{equation}  \label{Green_curl2}
       ( \curl  \uvec, \vvec) - ( \uvec, \curl \vvec) = \langle \gamma_t(\uvec),  \gamma_T( \vvec ) \rangle .
\end{equation}
\end{theorem}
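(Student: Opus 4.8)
The plan is to realize $\Yp$ as a quotient of $\Wcurl$ and then to construct $\gamma_T$ by duality against that quotient, so that the stated Green's formula holds essentially by definition. I would begin from Theorem~\ref{thm:Green_curl}(i), which tells us that $\gamma_t$ is a bounded linear map from $\Wcurl$ into the Banach space $\Besov{-1/p}{p}{p}^3$; hence its kernel $N := \ker \gamma_t$ is a closed subspace of $\Wcurl$. The quantity \eqref{defn:norm_Yp} is precisely the quotient norm of $\Wcurl/N$, transported to $\Yp = \gamma_t(\Wcurl)$ through the canonical algebraic isomorphism $\uvec + N \mapsto \gamma_t(\uvec)$, which is then an isometry. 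Since $\Wcurl$ is complete and $N$ is closed, $\Yp$ is a Banach space, in particular a normed topological vector space; and $\gamma_t : \Wcurl \to \Yp$ has operator norm at most one by construction, while it is surjective since $\Yp$ was defined to be its range.

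For fixed $\vvec \in W^{q}(\text{curl};\Omega)$ I would then define a linear functional $\gamma_T(\vvec)$ on $\Yp$ by
$$
    \langle \mathbf{w}, \gamma_T(\vvec) \rangle := ( \curl \uvec, \vvec) - (\uvec, \curl \vvec), \qquad \text{where } \uvec \in \Wcurl \text{ satisfies } \gamma_t(\uvec) = \mathbf{w} .
$$
The point demanding proof is that the right-hand side is independent of the chosen representative $\uvec$, equivalently that the bilinear form $B(\uvec, \vvec) := ( \curl \uvec, \vvec) - (\uvec, \curl \vvec)$ vanishes whenever $\uvec \in N$. One should not be tempted to argue ``$N = W^p_0(\text{curl};\Omega)$, so approximate $\uvec$ by fields in $C^{\infty}_0(\Omega)^3$'', because that kernel characterization is the curl-analogue of Lemma~\ref{lemma:Wdiv2} and is not yet available; instead I would argue through density. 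For $\uvec \in N$, Green's formula \eqref{Green_curl1} of Theorem~\ref{thm:Green_curl}(ii) gives $B(\uvec, \vvec) = \langle \gamma_t(\uvec), \gamma_0(\vvec) \rangle = 0$ for every $\vvec \in \Sobolev{1}{q}^3$, hence for every $\vvec \in C^{\infty}(\overline{\Omega})^3$. Since $|B(\uvec,\vvec)| \le \| \uvec \|_{\Wcurl} \, \| \vvec \|_{W^{q}(\text{curl};\Omega)}$ by H\"older's inequality, $B(\uvec, \cdot)$ is continuous on $W^{q}(\text{curl};\Omega)$, and by Theorem~\ref{thm:densityWcurl} applied with exponent $q$ the subspace $C^{\infty}(\overline{\Omega})^3$ is dense there; therefore $B(\uvec, \cdot) \equiv 0$ on all of $W^{q}(\text{curl};\Omega)$, which is exactly the required well-definedness.

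The same H\"older estimate, now with an arbitrary representative $\uvec$ over $\mathbf{w}$, gives $|\langle \mathbf{w}, \gamma_T(\vvec)\rangle| \le \|\uvec\|_{\Wcurl} \, \|\vvec\|_{W^{q}(\text{curl};\Omega)}$, and taking the infimum over all such $\uvec$ together with \eqref{defn:norm_Yp} yields $|\langle \mathbf{w}, \gamma_T(\vvec)\rangle| \le \|\mathbf{w}\|_{\Yp} \, \|\vvec\|_{W^{q}(\text{curl};\Omega)}$; hence $\gamma_T(\vvec) \in \Yp'$ with $\|\gamma_T(\vvec)\|_{\Yp'} \le \|\vvec\|_{W^{q}(\text{curl};\Omega)}$, so $\gamma_T : W^{q}(\text{curl};\Omega) \to \Yp'$ is well-defined and continuous. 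That this $\gamma_T$ extends the classical tangential trace $\vvec \mapsto (\normal \times \vvec) \times \normal$ on $C^{\infty}(\overline{\Omega})^3$ follows by comparing its defining identity with \eqref{Green_curl1} and using the remark, recorded just before the theorem, that $\langle \gamma_t(\uvec), \gamma_0(\vvec) \rangle = \langle \gamma_t(\uvec), \gamma_T(\vvec) \rangle$ for smooth fields. Finally, Green's formula \eqref{Green_curl2} is immediate: for $\uvec \in \Wcurl$ and $\vvec \in W^{q}(\text{curl};\Omega)$, evaluating the functional $\gamma_T(\vvec)$ at $\mathbf{w} = \gamma_t(\uvec)$ is, by construction, exactly the identity $(\curl \uvec, \vvec) - (\uvec, \curl \vvec) = \langle \gamma_t(\uvec), \gamma_T(\vvec)\rangle$.

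I expect the only genuinely delicate step to be the well-definedness of $\gamma_T$: the rest is quotient-space bookkeeping and H\"older's inequality. The care required there is to keep track of which class of test fields each identity is licensed on — Green's formula \eqref{Green_curl1} is available only against $\Sobolev{1}{q}^3$ — and to bridge the gap to all of $W^{q}(\text{curl};\Omega)$ via the density Theorem~\ref{thm:densityWcurl} for the conjugate exponent, rather than through an as-yet-unproven identification of $\ker\gamma_t$.
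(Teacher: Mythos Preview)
Your argument is correct and essentially identical to the paper's: both realize $\Yp$ as a quotient equipped with the infimum norm, define $\gamma_T(\vvec)$ through the bilinear form $(\curl\uvec,\vvec)-(\uvec,\curl\vvec)$, and obtain continuity from H\"older's inequality together with the infimum. Your handling of the well-definedness of $\gamma_T$ is in fact more careful than the paper's---you explicitly invoke the density Theorem~\ref{thm:densityWcurl} at exponent $q$ where the paper simply asserts independence of the representative---and your caution against prematurely invoking the kernel characterization (the curl analogue of Lemma~\ref{lemma:Wdiv2}, proved only afterwards as Lemma~\ref{lemma:Wcurl2}) is well placed.
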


\begin{proof} 
The image $\Yp$ equipped with the norm \eqref{defn:norm_Yp} 
is easily seen to be a normed topological vector space since $\gamma_t$ is linear.
To show that $\gamma_t$ is continuous, it suffices to observe that for any $\uvec \in \Wcurl$
$$
     \| \gamma_t(\uvec) \|_{\Yp} = \inf_{\substack{\vvec \in \Wcurl \\ \gamma_t(\vvec)=\gamma_t(\uvec)}} 
                  \|  \vvec \|_{\Wcurl } \leq \| \uvec \|_{\Wcurl}.
$$

We now prove that $\gamma_T$ is continuous. First of all, note that for any $s \in \Yp$ and any 
$\uvec \in \Wcurl$ such that $s = \gamma_t(\uvec)$, we have
$$
      \langle s,  \gamma_T( \boldsymbol{\phi} ) \rangle =  ( \curl  \uvec, \boldsymbol{\phi} ) - ( \uvec, \curl \boldsymbol{\phi})  , 
               \qquad \forall \boldsymbol{\phi} \in C^{\infty}(\overline{\Omega})^3 .
$$
Note that the right hand side is well-defined for all $\uvec \in \Wcurl$ and $\boldsymbol{\phi} \in W^q(\text{curl};\Omega)$. Moreover,
the value on the right-hand side is independent of the choice of $\uvec \in \Wcurl$ so long as $\gamma_t(\uvec) = s$. 
Therefore, for any fixed $s\in \Yp$,  the linear functional 
$$ 
     L(\boldsymbol{\phi}) = \langle s , \gamma_T(\boldsymbol{\phi}) \rangle
$$  
is well-defined for all $\boldsymbol{\phi} \in W^q(\text{curl};\Omega)$. In fact, 
\begin{align*}
       \big| L(\boldsymbol{\phi}) \big| = & \big|  ( \curl  \uvec, \boldsymbol{\phi} ) - ( \uvec, \curl \boldsymbol{\phi})  \big| \\
             = & \inf_{\substack{\vvec \in \Wcurl \\ \gamma_t(\vvec)=s}} \big|  ( \curl  \vvec, \boldsymbol{\phi} ) - ( \vvec, \curl \boldsymbol{\phi}) \big|  \\
         \leq &  \inf_{\substack{\vvec \in \Wcurl \\ \gamma_t(\vvec)=s}} \| \vvec \|_{\Wcurl} \| \boldsymbol{\phi} \|_{W^q(\text{curl};\Omega)} \\
             = &  \| s  \|_{\Yp}  \| \boldsymbol{\phi} \|_{W^q(\text{curl};\Omega)} ,
\end{align*}
which shows that $L$ is continuous over $W^q(\text{curl};\Omega)$. Considering the definition
of the norm on $\Yp'$, equation \eqref{dual_norm}, we immediately see that the previous
identity also implies that 
$$
     \|  \gamma_T( \boldsymbol{\phi}) \|_{\Yp'} \leq \| \boldsymbol{\phi} \|_{W^q(\text{curl};\Omega)}.
$$
Hence, the map $\gamma_T$ is  continuous with image in $\Yp'$.
\end{proof}

\begin{lemma}  \label{lemma:Wcurl2}
For a bounded $C^1$ domain $\Omega \subset \RR^3$ and $1<p<\infty$, we have
$$
   W^p_0(\text{\emph{curl}};\Omega) = \Big\{ \uvec\in  \Wcurli  \, \Big| \, \gamma_t(\uvec) = 0 \Big\} .
$$
\end{lemma}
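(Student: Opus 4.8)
The plan is to follow the proof of Lemma \ref{lemma:Wdiv2} almost verbatim, exchanging the normal trace $\gamma_n$ and the operator $\Div$ for the tangential trace $\gamma_t$ and the operator $\curl$, and using the Green's formula \eqref{Green_curl2} of Theorem \ref{thm:Green_curl2} in place of \eqref{Green_div}. The inclusion of $W^p_0(\text{curl};\Omega)$ into $\{ \uvec \in \Wcurl \mid \gamma_t(\uvec) = 0 \}$ is immediate: $\gamma_t$ vanishes on $C^{\infty}_0(\Omega)^3$ by construction, and since $\gamma_t$ is continuous on $\Wcurl$ (Theorem \ref{thm:Green_curl2}) while $W^p_0(\text{curl};\Omega)$ is by definition the closure of $C^{\infty}_0(\Omega)^3$ in that space, $\gamma_t$ must vanish on all of $W^p_0(\text{curl};\Omega)$.

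For the reverse inclusion I would fix $\uvec \in \Wcurl$ with $\gamma_t(\uvec) = 0$ and show $\uvec$ lies in the closure of $C^{\infty}_0(\Omega)^3$; by Theorem 3.5 of \cite{Rud91} it suffices to prove that every $\ell \in \Wcurl'$ vanishing on $C^{\infty}_0(\Omega)^3$ also vanishes at $\uvec$. Reusing the isometric embedding $I : \Wcurl \longrightarrow L^p(\Omega^{(6)})$, $I(\vvec) = (\vvec, \curl \vvec)$, from the proof of Theorem \ref{thm:densityWcurl}, together with the Hahn--Banach theorem and $\big(L^p\big)' = L^q$, any such $\ell$ can be represented as $\ell(\boldsymbol{\psi}) = (\boldsymbol{\psi}, \vvec)_{\Omega} + (\curl \boldsymbol{\psi}, \mathbf{w})_{\Omega}$ for some $\vvec, \mathbf{w} \in L^q(\Omega)^3$. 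Testing the vanishing of $\ell$ against $\boldsymbol{\phi} \in C^{\infty}_0(\Omega)^3$ and using that $\curl$ is formally self-adjoint under the $L^2$ pairing, one obtains $\vvec = - \curl \mathbf{w}$ in the distributional sense; since $\vvec \in L^q(\Omega)^3$ this says exactly that $\mathbf{w} \in W^q(\text{curl};\Omega)$.

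It then remains to conclude. For every $\uvec \in \Wcurl$,
$$ \ell(\uvec) = (\uvec, \vvec) + (\curl \uvec, \mathbf{w}) = (\curl \uvec, \mathbf{w}) - (\uvec, \curl \mathbf{w}) = \langle \gamma_t(\uvec), \gamma_T(\mathbf{w}) \rangle, $$
where the last equality is the Green's formula \eqref{Green_curl2}, which applies precisely because $\mathbf{w} \in W^q(\text{curl};\Omega)$ is exactly the regularity on which $\gamma_T$ was defined in Theorem \ref{thm:Green_curl2}. Hence $\gamma_t(\uvec) = 0$ forces $\ell(\uvec) = 0$, and $\uvec$ belongs to the closure of $C^{\infty}_0(\Omega)^3$, completing the argument.

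I do not expect a genuine obstacle here, since the argument is structurally identical to that of Lemma \ref{lemma:Wdiv2}. The one point worth a second look is making the exponents line up: $\uvec$ lives in the $p$-space while the auxiliary field $\mathbf{w}$ extracted from $\ell \in \Wcurl'$ lives in the conjugate $q$-space, so one must check that Green's formula \eqref{Green_curl2}, stated for $\uvec \in \Wcurl$ and $\vvec \in W^q(\text{curl};\Omega)$, is legitimately invoked; and the distributional identity $\vvec = -\curl \mathbf{w}$ must be read with the sign coming from the self-adjointness — not skew-adjointness — of the curl.
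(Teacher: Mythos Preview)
Your proposal is correct and follows essentially the same approach as the paper: both argue the nontrivial inclusion by showing that any $\ell\in\Wcurl'$ vanishing on $C^{\infty}_0(\Omega)^3$ admits a representation with $\vvec=-\curl\mathbf{w}$ distributionally, hence $\mathbf{w}\in W^q(\text{curl};\Omega)$, after which Green's formula \eqref{Green_curl2} gives $\ell(\uvec)=\langle\gamma_t(\uvec),\gamma_T(\mathbf{w})\rangle=0$. The sign and exponent checks you flag are handled exactly as in the paper's argument.
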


\begin{proof}
Since $\gamma_t(\boldsymbol{\phi}) = 0$ for all $\boldsymbol{\phi} \in C^{\infty}_0(\Omega)^3$, it suffices to prove the opposite inclusion. Given that
$W^p_0(\text{curl};\Omega)$ is the intersection of all $\ell \in \Wcurl'$ which vanish on $C^{\infty}_0(\Omega)^3$,
we must therefore show that such an $\ell$ also vanishes on $\uvec \in \Wcurl$ when $\gamma_t(\uvec) = 0.$

As we did in the proof of Lemma \ref{lemma:Wdiv2}, any functional $\ell \in \Wcurl'$ can be written in the form
$$
   \ell(\uvec) = (\gvec,\uvec) + (\hvec, \curl \uvec) , \qquad \forall \uvec \in \Wcurl,
$$
for some $\gvec, \hvec \in L^q(\Omega)^3$. If $\ell$ vanishes for all $\boldsymbol{\psi} \in C^{\infty}_0(\Omega)^3$, then
this representation shows that $\gvec = - \curl \hvec$ in a distributional sense and therefore that $\hvec \in W^q(\text{curl};\Omega)$.
For $\uvec \in \Wcurl$ such that $\gamma_t(\uvec) = 0$, we then apply identity \eqref{Green_curl2} to deduce that
\begin{align*}
  \ell(\uvec) = & - (\curl \hvec, \uvec ) + (\hvec, \curl \uvec) = \langle \gamma_t(\uvec ), \gamma_T( \hvec ) \rangle = 0.
\end{align*}
Hence $\uvec$ belongs to the closure of $C^{\infty}_0(\Omega)^3$.
\end{proof}

We complete this section with a simple result concerning the spaces which we have introduced.

\begin{lemma}  \label{lemma:reflexivity}
For a bounded Lipschitz domain $\Omega \subset \RR^3$ and $1<p<\infty$, the spaces
$\Wdivi$ and $\Wcurli$ are locally convex and reflexive Banach spaces.
\end{lemma}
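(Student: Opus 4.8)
The plan is to realize each of $\Wdivi$ and $\Wcurli$ as a closed subspace of a finite Cartesian power of $\Lp$, and then to invoke the standard stability of reflexivity under isometric isomorphism and passage to closed subspaces; local convexity will come for free. Indeed, $\Wdiv$ and $\Wcurl$ are normed vector spaces by \eqref{norm:Wsp_div}, and every normed space is locally convex, the origin-centred balls forming a convex local base, so nothing more is needed on that point.

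Next I would establish completeness. Given a Cauchy sequence $\uvec_n$ in $\Wdiv$, the definition of the norm forces $\uvec_n$ to be Cauchy in $\Lp^3$ and $\Div \uvec_n$ to be Cauchy in $\Lp$; completeness of $\Lp$ yields limits $\uvec \in \Lp^3$ and $w \in \Lp$. Testing against an arbitrary $\phi \in C^\infty_0(\Omega)$ and passing to the limit in the distributional identity $(\Div\uvec_n,\phi) = -(\uvec_n,\nabla\phi)$ shows $w = \Div\uvec$ in the sense of distributions, so $\uvec \in \Wdiv$ and $\uvec_n \to \uvec$ in that space. The same argument with $\nabla\phi$ replaced by $\curl\phivec$, $\phivec \in C^\infty_0(\Omega)^3$, handles $\Wcurl$. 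Hence both are Banach spaces.

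For reflexivity I would reuse the isometric embedding $I:\Wdiv \to L^p(\Omega^{(4)})$, $I(\uvec)=(\uvec,\Div\uvec)$, introduced in the proof of Theorem \ref{thm:densityWdiv}, together with its curl analogue $I:\Wcurl \to L^p(\Omega^{(6)})$, $I(\uvec)=(\uvec,\curl\uvec)$. The distributional identification just used shows that $I(\Wdiv)$ is closed in $L^p(\Omega^{(4)})$: if $I(\uvec_n)\to(\vvec,w)$, then $\uvec_n\to\vvec$ in $\Lp^3$, $\Div\uvec_n\to w$ in $\Lp$, and passing to the limit in the test-function identity gives $w=\Div\vvec$, so $(\vvec,w)=I(\vvec)$ lies in the range. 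Since $1<p<\infty$, $\Lp$ is reflexive (by Riesz representation, $(\Lp)'=L^q$ and $(L^q)'=\Lp$), hence the finite product $L^p(\Omega^{(4)})$ is reflexive; a closed subspace of a reflexive Banach space is reflexive; and reflexivity is preserved under isometric isomorphism. Therefore $\Wdiv \cong I(\Wdiv)$ is reflexive, and the identical reasoning applied to the embedding into $L^p(\Omega^{(6)})$ gives reflexivity of $\Wcurl$.

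The only step demanding any genuine (if modest) care is the closedness of the embedded image, equivalently the statement that $\Div$ and $\curl$, understood as operators acting in the sense of distributions, are closed with respect to $L^p$ convergence — which is precisely the test-function computation above. Everything else is a direct appeal to off-the-shelf functional analysis.
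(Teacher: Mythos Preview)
Your argument is correct and follows essentially the same route as the paper: both dispense with local convexity as a triviality of normed spaces, and both obtain reflexivity by invoking the isometric embeddings $I(\uvec)=(\uvec,\Div\uvec)\in L^p(\Omega^{(4)})$ and $I(\uvec)=(\uvec,\curl\uvec)\in L^p(\Omega^{(6)})$ together with the fact that closed subspaces of reflexive spaces are reflexive. You simply spell out the completeness and closedness-of-image steps that the paper leaves implicit.
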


\begin{proof} Local convexity is a triviality since the spaces are normed. Recalling the isometries
\begin{align*}
                  I_{\text{div}}(\uvec) = & (\uvec, \Div \uvec) \in L^p(\Omega^{(4)}, \\
                  I_{\text{curl}}(\uvec) = & (\uvec, \curl \uvec) \in L^p(\Omega^{(6)},                  
\end{align*}
introduced in the proofs of Theorems \ref{thm:densityWdiv} and \ref{thm:densityWcurl}, then we see that
$\Wdiv$ and $\Wcurl$ is isometric to a closed subspace of a reflexive space and are therefore, by Theorem 1.22 of \cite{Ada03},
also reflexive.
\end{proof}

\section{Divergence and rotational free vector fields}
\label{sec:divergence_free}

The objective of this section is to derive fundamental results concerning the existence of divergence free and rotational free vector fields. 
These results appear to be well-known in the literature but we have either not found the results in the form we 
needed or within the same reference. The fundamental result of this section
is a Friedrich's inequality for problems with homogeneous Dirichlet boundary conditions.  This result
is the only one which appears to be entirely new to the literature. Give more detailed citations for the 
two theorems in this section : Dauge ...

%
%

\begin{theorem}  \label{thm:curl_free}
Let $\Omega$ be a bounded simply connected Lipschitz domain in $\RR^3$. Suppose that $\uvec \in \Lp^3$. Then
$\curl \uvec =0$ in $\Omega$ if and only if there exists  $\phi \in \Sobolev{1}{p}$ such that $\uvec = \nabla \phi$. If this is the case, then 
$\phi$ is unique up to an additive constant and 
$$
        \|  \phi \|_{\Sobolev{1}{p} \setminus \RR}  \leq C \| \uvec \|_{\Lp^3}.
$$
\end{theorem}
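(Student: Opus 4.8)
The plan is to prove the two implications separately, with the simple connectedness of $\Omega$ entering only in the forward direction and the Lipschitz/boundedness hypotheses entering only through Poincar\'e's inequality and a collar argument. For the implication $\uvec = \nabla\phi \Rightarrow \curl \uvec = 0$: given $\boldsymbol{\psi}\in C^{\infty}_0(\Omega)^3$, an integration by parts licensed by $\phi\in\Sobolev{1}{p}$ and $\curl \boldsymbol{\psi}\in C^{\infty}_0(\Omega)^3$ gives $\langle \curl \uvec,\boldsymbol{\psi}\rangle = \int_{\Omega}\nabla\phi\cdot\curl \boldsymbol{\psi}\,d\xvec = -\int_{\Omega}\phi\,\Div \big(\curl \boldsymbol{\psi}\big)\,d\xvec = 0$, so $\curl \uvec = 0$ in $\mathcal{D}'(\Omega)$; no topological or boundary hypothesis is used here. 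Uniqueness up to a constant is equally soft: if $\nabla\phi_1 = \nabla\phi_2 = \uvec$ then $\phi_1-\phi_2$ is locally constant, hence constant since $\Omega$ is connected.

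For the converse, suppose $\curl \uvec = 0$. The strategy is to construct a potential \emph{locally} by the classical smooth Poincar\'e lemma applied to mollifications, pass to a weak limit to obtain some $\phi\in W^{1,p}_{\mathrm{loc}}(\Omega)$ with $\nabla\phi = \uvec$, and then use Lemma \ref{thm:Necas} to upgrade $\phi$ to an element of $\Sobolev{1}{p}$ up to the boundary. Concretely, let $\rho_{\epsilon}$ be a standard mollifier, put $\uvec_{\epsilon} := \uvec\ast\rho_{\epsilon}$ and $\Omega^{\epsilon} := \{\xvec\in\Omega:\operatorname{dist}(\xvec,\partial\Omega)>\epsilon\}$. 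Then $\uvec_{\epsilon}\in C^{\infty}(\Omega^{\epsilon})^3$, $\curl \uvec_{\epsilon} = (\curl \uvec)\ast\rho_{\epsilon} = 0$ on $\Omega^{\epsilon}$, $\|\uvec_{\epsilon}\|_{L^p(\Omega^\epsilon)}\le\|\uvec\|_{\Lp}$, and $\uvec_{\epsilon}\to\uvec$ in $L^{p}_{\mathrm{loc}}(\Omega)$. Because $\Omega$ is simply connected and Lipschitz, $\Omega^{\epsilon}$ is connected and simply connected for all sufficiently small $\epsilon$ (an inward collar of $\partial\Omega$ in $\Omega$ deformation retracts onto $\Omega^{\epsilon}$), so the ordinary Poincar\'e lemma for closed $1$-forms yields $\phi_{\epsilon}\in C^{\infty}(\Omega^{\epsilon})$ with $\nabla\phi_{\epsilon} = \uvec_{\epsilon}$, which we normalize by $\int_{B_0}\phi_{\epsilon}\,d\xvec = 0$ for a fixed ball $B_0\subset\subset\Omega$.

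On any connected relatively compact open set $\omega$ with $B_0\subset\omega\subset\subset\Omega$, combining $\|\nabla\phi_{\epsilon}\|_{L^{p}(\omega)} = \|\uvec_{\epsilon}\|_{L^{p}(\omega)}\le\|\uvec\|_{\Lp}$ with the normalization and Poincar\'e's inequality (Lemma \ref{lemma:Poincare}) gives $\|\phi_{\epsilon}\|_{W^{1,p}(\omega)}\le C_{\omega}\|\uvec\|_{\Lp}$, uniformly in $\epsilon$. Exhausting $\Omega$ by such sets $\omega$, using reflexivity of the $W^{1,p}$ spaces and a diagonal extraction, we obtain a subsequence converging weakly in $W^{1,p}_{\mathrm{loc}}(\Omega)$ to some $\phi$; since $\uvec_{\epsilon}\to\uvec$ strongly in $L^{p}_{\mathrm{loc}}$, we get $\nabla\phi = \uvec$ in $\mathcal{D}'(\Omega)$. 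Now $\phi\in L^{p}_{\mathrm{loc}}(\Omega)$ while $\nabla\phi = \uvec\in\Lp^3\hookrightarrow\Sobolev{-1}{p}$, so Lemma \ref{thm:Necas} forces $\phi\in\Lp$, whence $\phi\in\Sobolev{1}{p}$. Replacing $\phi$ by $\phi - |\Omega|^{-1}\int_{\Omega}\phi\,d\xvec$ and invoking Lemma \ref{lemma:Poincare} once more gives $\|\phi\|_{\Sobolev{1}{p}}\le C\|\nabla\phi\|_{\Lp} = C\|\uvec\|_{\Lp^3}$, which is the asserted bound on $\|\phi\|_{\Sobolev{1}{p}\setminus\RR}$.

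The step I expect to be the crux is the passage from a potential defined only in the interior to one belonging to $\Sobolev{1}{p}(\Omega)$ up to the boundary: the functions $\phi_{\epsilon}$ live on the shrinking sets $\Omega^{\epsilon}$ and are controlled only on compact subsets, so the integrability of $\phi$ near $\partial\Omega$ has to be recovered purely from the control on $\nabla\phi$, which is precisely what Lemma \ref{thm:Necas} is designed to do. The only other point requiring (routine) care is the topological claim that the inner parallel sets $\Omega^{\epsilon}$ of a simply connected Lipschitz domain are themselves simply connected for small $\epsilon$; this is what lets the elementary Poincar\'e lemma be applied globally on each $\Omega^{\epsilon}$ and thereby avoids having to glue local potentials via a \v{C}ech-cohomology argument.
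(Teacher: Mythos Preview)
Your proof is correct and follows essentially the same route as the paper's: mollify, use simple connectedness to produce smooth potentials on an exhaustion of $\Omega$, pass to a limit to get $\phi \in W^{1,p}_{\mathrm{loc}}(\Omega)$ with $\nabla\phi = \uvec$, and then invoke Lemma~\ref{thm:Necas} to upgrade to $\phi \in \Lp$. The only cosmetic differences are that the paper exhausts $\Omega$ by an abstract sequence of simply connected Lipschitz subdomains $\mathcal{O}_j \Subset \Omega$ (rather than your inner parallel sets $\Omega^{\epsilon}$, whose simple connectedness you rightly flag as the one nontrivial topological input) and takes a strong $W^{1,p}$-limit on each fixed $\mathcal{O}_j$ via Poincar\'e, rather than your weak-compactness/diagonal extraction.
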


\begin{proof} The proof follows the one for Theorem 3.37 from \cite{Mon03} which itself goes back to a proof in \cite{AmrBerDauGir98}.
The only differences are the use of convolutions in $L^p$ and of the $L^p$ version of Lemma \ref{thm:Necas}.

Given that $\uvec \in \Lp^3$, $\uvec$ can be extended by zero to all of $\RR^3$, which we denote by
$\mathbf{\widetilde{u}}$. With the help of a mollifier $\rho_{\epsilon}$,  a suitably smooth approximation  can be constructed
$$
       \mathbf{\widetilde{u}}_{\epsilon} = \rho_{\epsilon} \star \mathbf{\widetilde{u}} \in L^p(\RR^3)^3 \cap C^{\infty}_0(\RR^3)^3 ;
$$ 
see Lemma 3.16 in \cite{Ada03}. Moreover, $\mathbf{\widetilde{u}}_{\epsilon} \to \mathbf{\widetilde{u}}$ as $\epsilon \to 0$ in $L^p(\RR^3)^3$.

Under our assumptions on $\Omega$, it is possible to produce a sequence of simply connected Lipschitz domains $\{ \mathcal{O}_j\}_j$ satisfying
$\mathcal{O}_{j} \subset \mathcal{O}_{j+1}$, $\overline{\mathcal{O}_{j}} \subset \Omega$ and $\Omega = \cup_{j=1}^{\infty} \mathcal{O}_{j}$.
Fixing $\mathcal{O}_{j}$, since $\mathbf{\widetilde{u}}_{\epsilon}$ is smooth and 
$$ 
   \curl \mathbf{\widetilde{u}}_{\epsilon} = \curl \big( \rho_{\epsilon} \star \mathbf{\widetilde{u}} \big) 
        = \rho_{\epsilon} \star \big( \curl \mathbf{\widetilde{u}} \big) = \rho_{\epsilon} \star \big( \curl \uvec \big) = 0,
$$
by integration we can construct $\phi^j_{\epsilon} \in C^{\infty}(\Omega)$ such that 
$$
     \mathbf{\widetilde{u}}_{\epsilon}  = \nabla \phi^j_{\epsilon} ,  \qquad
        \int_{\mathcal{O}_j} \phi^j_{\epsilon} \, d\xvec = 0,
        \qquad \text{and }  \qquad \| \phi^j_{\epsilon} \|_{W^{1,p}(\mathcal{O}_j)} \leq C \| \uvec \|_{L^p(\mathcal{O}_j)^3} .
$$

We now observe that since $\nabla \phi^j_{\epsilon} =  \mathbf{\widetilde{u}}_{\epsilon} \to \mathbf{\widetilde{u}}$ in $L^p(\Omega)^3$, then
 Poincar\'e's inequality implies that 
$\phi^j_{\epsilon} \to \phi^j \in W^{1,p}(\mathcal{O}_j)$. Even in the limit, we have $ \| \phi^j \|_{W^{1,p}(\mathcal{O}_j)} \leq C \| \uvec \|_{L^p(\Omega)^3}$.
 The average of $\phi^j$  over $\mathcal{O}_j$ also vanishes.
At the cost of modifying $\phi^j$ by a constant, we may assume that $\phi^j = \phi^{j-1}$ over $\mathcal{O}_{j-1}$. 
For any compact $K\subset \Omega$,
there exists $k$ such that $K \subset \mathcal{O}_k $ and therefore 
by setting $\phi = \phi_k$ over $\mathcal{O}_k$, we can define  $\phi \in L^p_{\text{loc}}(\Omega)$.
In fact, since $\uvec|_{\mathcal{O}_j} = \nabla \phi^j$ we have that $\nabla \phi = \uvec \in L^p(\Omega)^3 \subset \Sobolev{-1}{p}^3$. 
Using Lemma \ref{thm:Necas}, we conclude that $\phi \in \Lp$. 
\end{proof}

\begin{theorem}  \label{thm:divergence_free}
Consider an exponent $1 < p < \infty$ and a bounded $C^1$ domain $\Omega \subset \RR^3$
such that the boundary $\partial \Omega = \overline{\Gamma_D \cup \Gamma_N}$ with $\Gamma_D$ and $\Gamma_N$ open
and connected subset of $\partial \Omega$.  If   $\uvec \in W^p(\text{\emph{div}};\Omega)$ satisfies
\begin{equation}  \label{compatibility_condition}
    \Div \uvec = 0 \qquad \text{on $\Omega$, and } \qquad \int_{\Gamma_i} \normal \cdot \uvec \, d\xvec = 0, \,\, i=D, N,
\end{equation}
then there exists a vector potential $\mathbf{A} \in W^{1,p}(\Omega)^3$ such that $\uvec = \curl \mathbf{A}$ and $\Div \mathbf{A} = 0.$
Moreover, there exists a constant $C$ for which
$$
      \| \mathbf{A} \|_{\Sobolev{1}{p}^3} \leq C \|  \uvec \|_{\Wdivi} .
$$
\end{theorem}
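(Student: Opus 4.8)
The plan is to imitate the classical $L^2$ construction of a vector potential (Theorem 3.38 of \cite{Mon03}, going back to \cite{AmrBerDauGir98}), replacing each $L^2$ elliptic regularity input by its $L^p$ analogue from Section~\ref{sec:regularity}. The starting point is to extend $\uvec$ to a divergence-free field on a larger, smooth, simply connected domain. Concretely, embed $\overline{\Omega}$ in a large ball $\mathcal{B}$; one wants $\mathbf{v}\in W^p(\mathrm{div};\mathcal{B})$ with $\mathbf{v}=\uvec$ in $\Omega$, $\Div \mathbf{v}=0$ in $\mathcal{B}$, and $\mathbf{v}$ with zero normal trace on $\partial\mathcal{B}$. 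To build $\mathbf{v}$, first extend $\uvec$ arbitrarily by some bounded extension operator to $\widetilde{\uvec}\in L^p(\mathcal{B})^3$, then correct the divergence: using Theorem~\ref{thm:elliptic3} solve the Neumann problem $-\Delta q = \Div\widetilde{\uvec}$ in $\mathcal{B}\setminus\overline{\Omega}$ with appropriate normal-derivative data matching $\gamma_n(\uvec)$ on $\partial\Omega$ (this is where the flux conditions \eqref{compatibility_condition} enter, guaranteeing the compatibility condition of Theorem~\ref{thm:elliptic3} is met on each component), and set $\mathbf{v}=\widetilde{\uvec}-\nabla q$ on $\mathcal{B}\setminus\overline{\Omega}$ while keeping $\mathbf{v}=\uvec$ on $\Omega$. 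Matching of normal traces across $\partial\Omega$, via the Green's formula \eqref{Green_div} of Theorem~\ref{thm:Green_div}, ensures $\mathbf{v}\in W^p(\mathrm{div};\mathcal{B})$ globally with $\Div\mathbf{v}=0$, and the norm of $\mathbf{v}$ is controlled by $\|\uvec\|_{W^p(\mathrm{div};\Omega)}$.

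Once $\mathbf{v}$ is a globally divergence-free field on the smooth simply connected $\mathcal{B}$ with zero normal trace, I would produce a raw vector potential. The cleanest route in $L^p$ is to solve, componentwise, the Poisson problem $-\Delta \mathbf{z} = \curl\mathbf{v}$ (interpreted in $W^{-1,p}$, which is legitimate since $\mathbf{v}\in L^p$ so $\curl\mathbf{v}\in W^{-1,p}(\mathcal{B})^3$) with homogeneous Dirichlet data, using Theorem~\ref{thm:elliptic2} three times; this gives $\mathbf{z}\in W^{1,p}(\mathcal{B})^3$ with $\|\mathbf{z}\|_{1,p}\le C\|\mathbf{v}\|_{L^p}$. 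Setting $\mathbf{A}_0=\curl\mathbf{z}$ one gets $\curl\mathbf{A}_0 = \curl\mathbf{v} - \nabla(\Div\mathbf{z})$; one then checks, using $\Div\mathbf{v}=0$, that $\Div\mathbf{z}$ satisfies a homogeneous problem and hence $\curl(\mathbf{A}_0-\mathbf{v})=0$. An alternative I would keep in reserve: since $\mathcal{B}$ is a ball, one can also invoke an explicit Biot--Savart/Poincaré-type formula, but the elliptic-solver route keeps everything inside the already-cited $L^p$ theory.

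The remaining steps are the standard "clean-up" arguments. From $\curl(\mathbf{A}_0-\uvec)=0$ on the simply connected $\Omega$, Theorem~\ref{thm:curl_free} produces $\psi\in W^{1,p}(\Omega)$ with $\mathbf{A}_0-\uvec=\nabla\psi$ on $\Omega$, hence $\uvec=\curl(\mathbf{A}_0-\nabla\psi)=\curl\mathbf{A}_0$ already holds on $\Omega$ (the gradient is killed by the curl), so it suffices to take $\widehat{\mathbf{A}}=\mathbf{A}_0|_\Omega$, which is in $W^{1,p}(\Omega)^3$. Finally, to arrange $\Div\mathbf{A}=0$, solve $\Delta\chi=\Div\widehat{\mathbf{A}}$ in $\Omega$ with a boundary condition (e.g.\ $\chi=0$ on $\partial\Omega$) using Theorem~\ref{thm:elliptic2}; since $\widehat{\mathbf{A}}\in W^{1,p}$ we have $\Div\widehat{\mathbf{A}}\in L^p\subset W^{-1,p}$, so $\chi\in W^{1,p}$ — but here I would instead use the gradient-regularity Theorem~\ref{thm:elliptic_regularity} to get $\chi\in W^{1+1/p,p}(\Omega)$ so that $\nabla\chi\in W^{1,p}(\Omega)^3$, and set $\mathbf{A}=\widehat{\mathbf{A}}-\nabla\chi$. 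Then $\curl\mathbf{A}=\curl\widehat{\mathbf{A}}=\uvec$, $\Div\mathbf{A}=0$, and collecting the estimates from each solver gives $\|\mathbf{A}\|_{W^{1,p}(\Omega)^3}\le C\|\uvec\|_{W^p(\mathrm{div};\Omega)}$.

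I expect the main obstacle to be the extension/gluing step: producing $\mathbf{v}$ that is simultaneously divergence-free across $\partial\Omega$ \emph{and} norm-controlled requires the flux-balance hypotheses \eqref{compatibility_condition} to line up exactly with the solvability condition of the exterior Neumann problem (Theorem~\ref{thm:elliptic3}), and verifying the normal-trace matching across the interface rigorously needs Green's formula \eqref{Green_div} together with care about the decomposition $\partial\Omega=\overline{\Gamma_D\cup\Gamma_N}$ into connected pieces. The componentwise Poisson solves and the final divergence clean-up are routine once the global divergence-free extension is in hand.
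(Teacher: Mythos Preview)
Your extension step is essentially the same as the paper's (the paper solves Neumann problems on the complementary components and glues in the gradients directly, rather than extending first and then correcting, but the effect is identical and uses the same ingredient, Theorem~\ref{thm:elliptic3}, with the flux conditions \eqref{compatibility_condition} providing the compatibility). The real divergence from the paper, and the real trouble, is in your construction of the raw vector potential.

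There are two concrete problems. First, solving $-\Delta\mathbf{z}=\curl\mathbf{v}$ with data in $W^{-1,p}$ via Theorem~\ref{thm:elliptic2} yields only $\mathbf{z}\in W^{1,p}$, so $\mathbf{A}_0:=\curl\mathbf{z}$ lies merely in $L^p$, not in $W^{1,p}$; you never recover the missing derivative. Second, the logical chain ``$\curl(\mathbf{A}_0-\uvec)=0\Rightarrow \mathbf{A}_0-\uvec=\nabla\psi\Rightarrow \uvec=\curl(\mathbf{A}_0-\nabla\psi)=\curl\mathbf{A}_0$'' is broken: from $\mathbf{A}_0-\uvec=\nabla\psi$ you get $\uvec=\mathbf{A}_0-\nabla\psi$, not $\uvec=\curl(\text{anything})$. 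What you have actually shown is $\curl\mathbf{A}_0=\curl\uvec$, which is not the desired $\curl\mathbf{A}_0=\uvec$. Your divergence clean-up has a similar regularity shortfall: Theorem~\ref{thm:elliptic_regularity} is stated for harmonic functions, not for $\Delta\chi=\Div\widehat{\mathbf{A}}$, and in any case $\chi\in W^{1+1/p,p}$ does not give $\nabla\chi\in W^{1,p}$; you would need $\chi\in W^{2,p}$, i.e.\ a full $L^p$ shift theorem, which is not among the results quoted in Section~\ref{sec:regularity}.

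The paper sidesteps all of this by working on $\RR^3$ after the extension: the potential is written explicitly on the Fourier side as $\widehat{\mathbf{A}}(\boldsymbol{\xi})=|\boldsymbol{\xi}|^{-2}\,\boldsymbol{\xi}\times\widehat{\mathbf{u}}(\boldsymbol{\xi})$, and the $W^{1,p}$ bound comes from the Marcinkiewicz--Mihlin multiplier theorem applied to the symbols $\xi_j\xi_k/|\boldsymbol{\xi}|^2$. This delivers $\mathbf{A}\in W^{1,p}$, $\Div\mathbf{A}=0$ and $\curl\mathbf{A}=\uvec$ in one stroke, with no boundary-value subtleties and no need for a separate divergence clean-up. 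If you want to salvage an elliptic-solver route, the correct equation is $-\Delta\mathbf{w}=\mathbf{v}$ (right-hand side in $L^p$, compactly supported), followed by $\mathbf{A}=\curl\mathbf{w}$; but then you need $\mathbf{w}\in W^{2,p}$, which again is precisely the Calder\'on--Zygmund/Mihlin input that the paper invokes.
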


\begin{proof} 
The proof is identical to the one given for Theorem 3.38 in \cite{Mon03} except for the application of the Marcinkiewicz-Mihlin multiplier theorem;
see Chapter 4, subsection 3.2 of \cite{Ste70}. This ingredient is not new since it was already used by Dauge \cite{Dau88} in a proof of
the same theorem for domains with polyhedral boundaries. Again for completeness, we provide the proof.

The problem will be solved using Fourier transforms and therefore, it will be necessary to construct an extension of $\uvec \in W^p(\text{div};\Omega).$
Our assumptions on the domain imply that $\RR^3 \setminus \Omega$ is formed of two components $\Omega_0$ and $\Omega_1$, one of 
which is unbounded, say $\Omega_0$. 
Consider the following problem over $\Omega_1$ :
$$
   \Delta v_1  = 0 \qquad \text{on $\Omega_1$, and } \qquad \normal \cdot \nabla v_1 = \normal \cdot \uvec \qquad \text{on $\partial \Omega_1$.}
$$
By Theorem \ref{thm:Green_div}, the boundary condition $\normal \cdot \uvec \in B^{-1/p}_{p,p}(\partial \Omega_1)$ possesses sufficient regularity
to guarantee, by Theorem \ref{thm:elliptic3}, a solution $v_1 \in W^{1,p}(\Omega_1)$. Clearly $\nabla v_1$ is divergence free inside $\Omega_1$.

To extend $\uvec$ into $\Omega_0$, choose a large closed ball $K$ whose interior contains the closure of $\Omega$. Over 
$K \setminus \Omega$, consider the problem
\begin{equation*}
\begin{array}{rcll}
    \Delta v_0 & = &  0                                    &  \text{on $K \setminus \Omega$,} \\
    \normal \cdot \nabla v_0 & = & \normal \cdot \uvec &  \text{on $\partial \Omega_0$,} \\
     \normal \cdot \nabla v_0 & = & 0   &  \text{on $\partial K$.} 
\end{array}
\end{equation*}
Again, there exists a unique solution $v_0 \in W^{1,p}(K \setminus \Omega)$. The new vector field $\mathbf{\widetilde{u}}$ 
obtained by patching together $\uvec, \nabla v_0$ and $\nabla v_1 $ according to
$$
\mathbf{\widetilde{u}}(\xvec) = 
\begin{cases}
        \uvec(\xvec) & \text{if $\xvec \in \Omega$,} \\
        \normal \cdot \nabla v_0 (\xvec) & \text{if $\xvec \in \Omega_0$,} \\
         \normal \cdot \nabla v_1 (\xvec) & \text{if $\xvec \in (K \setminus \Omega)$,} \\
         0 & \text{if $\xvec$ is in the complement of $K$,}
\end{cases}
$$
then belongs to $\Wdiv$ since the normal components are equal along the boundaries. The
proof that this new function belongs to $L^p$ requires an obvious extension of Lemma 5.3 in \cite{Mon03}.
Furthermore, this extension of $\uvec$ to all of $\RR^3$ satisfies $\curl \mathbf{\widetilde{u}} = 0$.

With the extension $ \mathbf{\widetilde{u}} $ belonging to $L^p(\Omega) \subset L^1(\Omega) $, the Fourier
transform of $\mathbf{\widetilde{u}}$ is well-defined. As in the $L^2$ case, we want to consider the vector
potential $\mathbf{A}$ defined by
$$
     \widehat{A}_1 = \frac{\xi_3 \widehat{u}_2 - \xi_2 \widehat{u}_3}{| \boldsymbol{\xi} |^2}, \qquad
     \widehat{A}_2 = \frac{\xi_1 \widehat{u}_3 - \xi_3 \widehat{u}_1}{| \boldsymbol{\xi} |^2}, \qquad
     \widehat{A}_3 = \frac{\xi_2 \widehat{u}_1 - \xi_1 \widehat{u}_2}{| \boldsymbol{\xi} |^2},
$$ 
but a priori, it is not clear that the inverse Fourier transform of $\mathbf{A}$ can be taken. In this vein, write 
$$
    \widehat{\mathbf{A}}(\boldsymbol{\xi}) 
         = \chi (\boldsymbol{\xi})  \widehat{\mathbf{A}}(\boldsymbol{\xi}) + \big(1-\chi(\boldsymbol{\xi}) \big)  \widehat{\mathbf{A}}(\boldsymbol{\xi})
$$
where $\chi \in C^{\infty}(\RR^3)$ is identically $1$ outside a large ball centered at the origin while being
identically $0$ in a neighborhood of the origin. The inverse Fourier transform of $(1-\chi)\widehat{\mathbf{A}}$ is analytic
in $\RR^3$ and therefore in $\Lp^3$. On the other hand,  the inverse Fourier transform of $\chi \widehat{\mathbf{A}}$
and $\xi_j \chi(\boldsymbol{\xi}) \widehat{\mathbf{A}}_i$ can be written as
\begin{gather*}
       \big( \chi(\boldsymbol{\xi}) \widehat{\mathbf{A}}(\boldsymbol{\xi}) \big)\check{} 
                =  \big( W(\boldsymbol{\xi}) \widehat{\uvec}(\boldsymbol{\xi}) \big)\check{} \\
         \big( \xi_j \chi(\boldsymbol{\xi}) \widehat{A}_i (\boldsymbol{\xi}) \big)\check{} 
                =  \big( \xi_j W(\boldsymbol{\xi}) \widehat{\uvec}(\boldsymbol{\xi}) \big)\check{} 
\end{gather*}
where the matrix $W(\boldsymbol{\xi})$ possesses terms that are asymptotically of the form $\xi_k/|\boldsymbol{\xi}|^2$.
A simple computation shows that the multipliers $W(\boldsymbol{\xi})$ and $\xi_jW(\boldsymbol{\xi})$ satisfies the conditions of the Marcinkiewicz-Mihlin 
multiplier theorem and therefore, that there exists a constant $C$ such that
$$
     \|  \big( \chi(\boldsymbol{\xi}) \widehat{\mathbf{A}}(\boldsymbol{\xi}) \big)\check{}  \|_{\Lp^3} \leq C \| \uvec \|_{L^p(\Omega)^3}
     \quad \text{and} \quad  \|  \big( \xi_j \chi(\boldsymbol{\xi}) \widehat{A}_i (\boldsymbol{\xi}) \big) \check{}  \|_{\Lp} \leq C \| \uvec \|_{L^p(\Omega)^3} .
$$  
This shows that $\mathbf{A}$ in fact belongs to $\Sobolev{1}{p}^3$. 
Using the fact that $\uvec$ is divergence free, it is easy to verify that $\mathbf{A}$ is also divergence free
and that $\uvec = \curl \mathbf{A}$. 
\end{proof}

\section{Friedrich's inequality}
\label{sec:Friedrichs}

The objective of this section is to provide
detailed estimates concerning the regularity of the spaces described in the preceding sections, in particular 
with regard to the boundary conditions. We will show that the boundary condition $\normal \times \Bvec = 0 \in L^p(\partial \Omega)$
provides some additional smoothness inside the domain.
The most important result will be the Friedrich's inequality, Theorem \ref{Friedrich_inequality1}.

We begin our analysis with the following space
\begin{equation}
    \Wimp = \left\{  \uvec \in \Wcurl \,  \Big| \, \normal \times  \uvec  \in L^p_t(\partial \Omega)\right\},
\end{equation}
where $L^p_t(\partial \Omega) = \{ \uvec \in \Lp^3 | \, \normal \cdot \uvec = 0 \}$. 
The norm over $ \Wimp $ will be
$$
           \| \uvec \|_{\Wimp} := \| \uvec \|_{\Lp^3} + \| \curl \uvec \|_{\Lp^3} + \| \normal \times  \uvec  \|_{L^p(\partial \Omega)^3}.
$$ 
The next result shows that this strengthened requirement at the boundary produces a non-trivial amount of regularity 
inside the domain. The result comes from Costabel \cite{Cos90}
where it was presented in $L^2$. We also add that this result bears a close resemblance to a result  mentioned
in Remark 2.19 of \cite{AmrBerDauGir98} and attributed to Dauge \cite{Dau92}. Although the next bound is interesting in it's
own right, its true purpose is to demonstrate Corollary \ref{cor:compactness1}.

\begin{theorem}  \label{thm:lifting_to_3/2}
Consider a bounded $C^1$ domain $\Omega \subset \RR^3$ and $ 1< p < \infty$.
If $\uvec \in \Wdivi \cap \Wcurli$ and $\normal \times \uvec  \in \Lp^3$, then
$\uvec \in \Sobolev{1/p}{p}^3$ and for some constant $C$,
\begin{equation}     \label{estimate_lifting}
        \|  \uvec \|_{\Sobolev{1/p}{p}^3} \leq C \left\{  \| \uvec\|_{\Lp^3} + \| \Div \uvec \|_{\Lp} 
                              + \| \curl \uvec \|_{\Lp^3} + \| \uvec \times \normal \|_{L^p(\partial \Omega)^3} \right\} .
\end{equation}
\end{theorem}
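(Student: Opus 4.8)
The plan is to reduce the estimate \eqref{estimate_lifting} to the elliptic regularity results of Section~\ref{sec:regularity} by constructing, from $\uvec$, a scalar and a vector potential whose boundary data are controlled by the $L^p$-norms on the right-hand side, in the spirit of the $L^2$ argument of Costabel \cite{Cos90}. First I would record the standard ``regularized'' Helmholtz-type splitting available on a $C^1$ domain: write $\uvec = \nabla \varphi + \curl \mathbf{A}$ where $\varphi \in \Sobolev{1}{p}$ solves $\Delta \varphi = \Div \uvec$ in $\Omega$ with $\normal \cdot \nabla \varphi = \normal \cdot \uvec$ on $\partial \Omega$ (Theorem~\ref{thm:elliptic3}, whose compatibility condition is exactly $\int_\Omega \Div \uvec = \int_{\partial\Omega} \normal\cdot\uvec$, valid by Green's formula \eqref{Green_div}), and where $\mathbf{A}$ is obtained from the divergence-free remainder $\uvec - \nabla\varphi$ by Theorem~\ref{thm:divergence_free}, so that $\mathbf{A} \in W^{1,p}(\Omega)^3$ with $\Div\mathbf{A}=0$ and $\|\mathbf{A}\|_{W^{1,p}} \le C\|\uvec\|_{W^p(\mathrm{div};\Omega)}$. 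At this stage $\curl\mathbf{A}$ already lies in $W^{1,p}$'s image under curl, i.e.\ in $\Lp^3$, but more importantly $\mathbf{A}$ contributes nothing problematic: the only term whose $W^{1/p,p}$-regularity is genuinely in question is governed by the tangential boundary data.

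Next I would exploit the hypothesis $\normal \times \uvec \in \Lp^3$. Since $\normal\times\nabla\varphi = \nabla_{\partial\Omega}(\gamma_0\varphi)$ is the surface gradient, and $\normal\times\curl\mathbf{A}$ is controlled by the trace of $\mathbf{A}\in W^{1,p}$, the assumption translates into a statement that $\gamma_0\varphi$ has a surface gradient in $L^p(\partial\Omega)^3$, hence $\gamma_0\varphi \in W^{1,p}(\partial\Omega)$ with norm bounded by the right-hand side of \eqref{estimate_lifting}. Now apply the shift theorem for the Dirichlet problem, the first half of Theorem~\ref{thm:elliptic_regularity}: the harmonic function with boundary value $\gamma_0\varphi \in W^{1,p}(\partial\Omega) = \Besov{1}{p}{p}$ lies in $\Sobolev{1+1/p}{p}$. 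More precisely I would split $\varphi = \varphi_h + \varphi_0$ where $\varphi_h$ is harmonic with $\gamma_0\varphi_h = \gamma_0\varphi$ and $\varphi_0 \in W^{1,p}_0(\Omega)$ solves $\Delta\varphi_0 = \Div\uvec \in \Lp$; the first piece is in $\Sobolev{1+1/p}{p}$ by Theorem~\ref{thm:elliptic_regularity}, and the second, having zero trace and $L^p$ Laplacian, is also in $\Sobolev{1+1/p}{p}$ by the same shift theorem machinery (interior elliptic regularity plus the zero Dirichlet boundary condition). Consequently $\nabla\varphi \in \Sobolev{1/p}{p}^3$ with the desired bound, and since $\curl\mathbf{A}\in W^{1,p}(\Omega)^3 \hookrightarrow \Sobolev{1/p}{p}^3$ a fortiori, adding the two pieces gives $\uvec \in \Sobolev{1/p}{p}^3$ with \eqref{estimate_lifting}.

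The main obstacle I expect is the surface-calculus step: justifying rigorously that $\normal\times\uvec \in \Lp^3$ together with $\uvec \in \Wcurl$ forces $\gamma_0\varphi \in W^{1,p}(\partial\Omega)$. One must check that $\normal\times\curl\mathbf{A}$ is indeed in $\Lp^3$ (it is, because $\mathbf{A}\in W^{1,p}(\Omega)^3$ so $\curl\mathbf{A}$ has a well-defined trace in $\Besov{1-1/p}{p}{p} \subset L^p(\partial\Omega)$), and then that the tangential field $\normal\times\nabla\varphi$ being in $L^p(\partial\Omega)^3$ is equivalent to $\gamma_0\varphi \in W^{1,p}(\partial\Omega)$ — this is where the density result Theorem~\ref{thm:Crouzeix} for the space $G = \{u\in\Sobolev{1}{p} \mid \gamma_0 u \in W^{1,p}(\partial\Omega)\}$ enters, allowing one to verify the identity $\normal\times\nabla\varphi = \nabla_{\partial\Omega}\gamma_0\varphi$ first on smooth functions and then pass to the limit. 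A secondary technical point is keeping all constants uniform, which follows from the continuous dependence estimates \eqref{estimate3}, \eqref{estimate_regularity1} and the bound in Theorem~\ref{thm:divergence_free}; everything else is bookkeeping.
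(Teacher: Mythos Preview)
Your decomposition $\uvec = \nabla\varphi + \curl\mathbf{A}$ contains a genuine gap: you repeatedly attribute to $\curl\mathbf{A}$ the regularity that only $\mathbf{A}$ itself possesses. Theorem~\ref{thm:divergence_free} gives $\mathbf{A}\in W^{1,p}(\Omega)^3$, but then $\curl\mathbf{A}$ is merely in $L^p(\Omega)^3$, not in $W^{1,p}$. Consequently (a) the claim that $\curl\mathbf{A}$ ``has a well-defined trace in $\Besov{1-1/p}{p}{p}\subset L^p(\partial\Omega)$'' is unfounded---the only trace available is $\gamma_t(\curl\mathbf{A})\in\Besov{-1/p}{p}{p}$ via Theorem~\ref{thm:Green_curl}, which is too weak to subtract from $\normal\times\uvec$ and conclude $\normal\times\nabla\varphi\in L^p(\partial\Omega)$; and (b) the final assertion ``$\curl\mathbf{A}\in W^{1,p}(\Omega)^3\hookrightarrow \Sobolev{1/p}{p}^3$'' is simply false, so the summand $\curl\mathbf{A}$ is never placed in $W^{1/p,p}$. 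Both errors come from applying the vector-potential construction to the wrong object: you end up with the \emph{curl} of a $W^{1,p}$ field as a summand, which buys you nothing.

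The paper repairs exactly this by reversing the order of the construction: apply Theorem~\ref{thm:divergence_free} to $\curl\uvec$ (which is divergence-free) to obtain $\mathbf{w}\in W^{1,p}(\Omega)^3$ with $\curl\mathbf{w}=\curl\uvec$. Now $\mathbf{w}$ itself---not its curl---is a summand of $\uvec$, and it already lies in $W^{1,p}\subset W^{1/p,p}$ with $\normal\times\mathbf{w}\in L^p(\partial\Omega)$. The remainder $\mathbf{z}=\uvec-\mathbf{w}$ is curl-free, hence $\mathbf{z}=\nabla\xi$ by Theorem~\ref{thm:curl_free}, and one splits $\xi=\eta+\nu$ with $\eta\in W^{2,p}$ a Newtonian potential of $\Div\mathbf{z}$ (extended by zero to $\RR^3$, so that full-space regularity applies and no $W^{2,p}$ estimate on the $C^1$ domain is needed) and $\nu$ harmonic with $\gamma_0(\nu)\in W^{1,p}(\partial\Omega)$, to which Theorem~\ref{thm:elliptic_regularity} gives $\nu\in W^{1+1/p,p}$. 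Your splitting $\varphi=\varphi_h+\varphi_0$ is in the same spirit, but note additionally that the claim $\varphi_0\in W^{1+1/p,p}$ for the zero-Dirichlet piece with $L^p$ Laplacian is not supplied by the paper's stated results on a merely $C^1$ domain; the extension-by-zero trick for $\eta$ sidesteps this as well.
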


\begin{proof}
The proof is identical to the one given by Monk  \cite{Mon03} for his Theorem 3.47 except that his elliptic regularity result, Theorem 3.17, must 
be replaced by the generalization, Theorem \ref{thm:elliptic_regularity}. 

To prove estimate \eqref{estimate_lifting}, it suffices to prove a local version of this estimate and then to use a partition of unity to extend the estimate
to all of $\Omega$. Following \cite{Mon03}, the regularity of the boundary allows us to assume that locally $\Omega$ is a bounded simply connected
domain with a connected $C^1$ boundary. Since $\uvec \in \Wcurl$ and $\Div \curl \uvec = 0$, then  
there exists a vector potential $\mathbf{w} \in \Sobolev{1}{p}^3$ such that $\curl \mathbf{w} = \curl \uvec$ and  $\Div \mathbf{w} = 0$
(Note that the constraint on the boundary is satisfied because of formula \eqref{Green_curl2}). Secondly, for 
$\mathbf{z} = \uvec - \mathbf{w} \in \Wdiv$, we have $\curl \mathbf{z} = 0$ and by Theorem \ref{thm:curl_free} there exists $\xi \in \Sobolev{1}{p}$ such
that $\mathbf{z} = \nabla \xi$. Finally, Theorem 5.4 of \cite{Tay81} states that we can construct $\eta \in \Sobolev{2}{p}$ satisfying
$\Delta \eta = \Div \mathbf{z} \in \Lp$ (in constrast to the previous $\xi$, to obtain $\eta$ we need to extend $\Div \mathbf{z}$ by zero outside of $\Omega$ 
and thereby use smoother boundary conditions). 

To obtain the increased reguarity, we will find some additional
regularity at the boundary and invoke Theorem \ref{thm:elliptic_regularity} to lift the regularity to all of $\Omega$. 
Since $\normal \times \uvec \in L^p_t(\partial \Omega)^3$ and $\mathbf{w} \in \Sobolev{1}{p}^3$, then
$$
     \normal \times  \mathbf{z}  = \normal \times ( \uvec - \mathbf{w})  \in L^p_t(\partial \Omega) .
$$
Using the fact that $\xi \in \Sobolev{1}{p}$ and $\eta \in \Sobolev{2}{p}$ we have that the function $\nu = \xi - \eta$
satisfies $\normal \times \mathbf{z}  = \normal \times \nabla \xi = \normal \times \nabla (\eta + \nu ) $ 
and $\nabla \nu \times \normal \in L^p_t(\partial \Omega)$. Therefore $\nu \in W^{1,p}(\partial \Omega)$, 
$\Delta \nu = \Div ( \nabla \xi - \nabla \eta) = 0$ 
and Theorem \ref{thm:elliptic_regularity} implies that $\nu \in \Sobolev{1+1/p}{p}$. 

We begin with an estimate for $\nu$. First of all, notice that since $\xi$ is a consequence of Theorem \ref{thm:curl_free}, it is therefore 
defined up a to constant. The function $\nu$ is thus also defined up to a constant and we may assume that it's average value along the boundary 
vanishes.  Using the elliptic regularity result, Theorem \ref{thm:elliptic_regularity}, and Poincar\'e's inequality on the boundary, we deduce the following.
\begin{align}
   \| \nabla \nu \|_{\Sobolev{1/p}{p}} 
                  \leq &  \|   \nu  \|_{\Sobolev{1+1/p}{p}^3}     \nonumber   \\
                  \leq & C \left\{  \| \nu \|_{L^p(\partial \Omega)} + \| \normal \times \nabla \nu \|_{L^p(\partial \Omega)^3} \right\}    \nonumber  \\
                  \leq & C \left\{  \| \normal \times \nabla \nu \|_{L^p(\partial \Omega)^3} + \left|  \int_{\partial \Omega} \nu \, d\boldsymbol{\sigma} \right| \right\} 
                   = C \| \normal \times \nabla \nu \|_{L^p(\partial \Omega)^3}   \nonumber
\end{align} 
Using the identity $\uvec = \mathbf{w} + \nabla \eta + \nabla \nu$ and the stability estimates of
Theorems \ref{thm:Green_curl2}, \ref{thm:divergence_free} and \ref{thm:curl_free} for $\mathbf{w}$ and $\eta$, we further obtain
\begin{align}
   \| \nabla \nu \|_{\Sobolev{1/p}{p}^3} 
                  \leq & C \left\{      \| \normal \times \uvec            \|_{L^p(\partial \Omega)^3} 
                                            +  \| \normal \times \mathbf{w}  \|_{L^p(\partial \Omega)^3} 
                                            +  \| \normal \times \nabla \eta  \|_{L^p(\partial \Omega)^3}  \right\}    \nonumber  \\
                  \leq & C \left\{      \| \normal \times \uvec            \|_{L^p(\partial \Omega)^3} 
                                            +  \| \mathbf{w}  \|_{\Sobolev{1}{p}^3} 
                                            +  \|  \nabla \eta  \|_{\Wcurl}  \right\}    \nonumber  \\
                  \leq & C \left\{      \| \normal \times \uvec            \|_{L^p(\partial \Omega)^3} 
                                            +  \| \curl \uvec  \|_{\Lp^3} 
                                            +  \| \Div \uvec \|_{\Lp}  \right\} . \nonumber
\end{align} 
It is already known that $\uvec \in \Sobolev{1/p}{p}^3$ and we can estimate its norm 
by exploiting the previous estimate. 
\begin{align*}
       \|  \uvec \|_{\Sobolev{1/p}{p}^3} 
                     \leq & C \left\{      \| \mathbf{w} \|_{\Sobolev{1/p}{p}^3} 
                                               + \|  \nabla \eta \|_{\Sobolev{1/p}{p}^3} 
                                               + \|  \nabla \nu  \|_{\Sobolev{1/p}{p}^3} \right\} \\
                     \leq & C \left\{      \| \curl \uvec  \|_{\Lp^3} 
                                               + \| \Div \uvec   \|_{\Lp} 
                                               + \| \normal \times \uvec \|_{L^p(\partial \Omega)^3} \right\}                                                
\end{align*}
This completes the proof of the theorem.
\end{proof}

We introduce the spaces
\begin{align}
     X^p_N  = & \left\{  \uvec \in \Wdiv \cap \Wcurl \, \big| \, \normal \times \uvec = 0 \text{ on $\partial \Omega$}  \right\} , \\
     X^p_{N,0}  = & \left\{  \uvec \in X^p_N \, \big| \, \Div \uvec = 0 \text{ in $ \Omega$}  \right\} , \\
     W^p_N = & \left\{  \uvec \in \Wdiv \cap \Wcurl \, \big| \, \normal \times \uvec \in L^p_t(\partial \Omega)
            \text{ and $\Div \uvec = 0$}  \right\} , \\
     X^p_T  = & \left\{  \uvec \in \Wdiv \cap \Wcurl \, \big| \, \normal \cdot \uvec = 0 \text{ on $\partial \Omega$}  \right\} , \\
     X^p_{T,0}  = & \left\{  \uvec \in X^p_T \, \big| \, \Div \uvec = 0 \text{ in $ \Omega$}  \right\} , \\
     W^p_T = & \left\{  \uvec \in \Wdiv \cap \Wcurl \, \big| \, \normal \cdot \uvec \in L^p(\partial \Omega)
            \text{ and $\Div \uvec = 0$}  \right\} .
\end{align}
The previous theorem implies that these spaces imbed continuously into $\Sobolev{1/p}{p}^3$. 
Theorem 1.4.3.2 in \cite{Gri85} states that $\Sobolev{1/p}{p}$ imbeds compactly into $\Lp$ and therfore, we have
the following important corollary which generalizes Corollary 3.49 of Monk \cite{Mon03}.

\begin{corollary}  \label{cor:compactness1}
Consider a bounded $C^1$ domain  $\Omega \subset \RR^3$ and $1 < p < \infty$. 
Then the spaces $X^p_N, X^p_{N,0}, W^p_N, X^p_T, X^p_{T,0}$ and $W^p_T$ imbed compactly into $\Lp^3.$
\end{corollary}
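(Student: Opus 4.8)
The plan is to factor each of the six embeddings as a bounded linear map into $\Sobolev{1/p}{p}^3$ followed by the compact embedding $\Sobolev{1/p}{p} \hookrightarrow \Lp$ supplied by Theorem 1.4.3.2 of \cite{Gri85} (applied componentwise), and to use that a bounded operator composed with a compact one is compact. Concretely, given a bounded sequence $\{\uvec_k\}$ in one of the spaces, I would first bound it uniformly in $\Sobolev{1/p}{p}^3$, then extract from Grisvard's result a subsequence converging in $\Lp^3$; since the limit automatically lies in $\Lp^3$, this is exactly the assertion that the embedding into $\Lp^3$ is compact.

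For the three spaces $X^p_N$, $X^p_{N,0}$ and $W^p_N$, which carry a tangential boundary constraint, the uniform $\Sobolev{1/p}{p}^3$-bound is immediate from Theorem \ref{thm:lifting_to_3/2}: every $\uvec$ in these spaces belongs to $\Wdivi \cap \Wcurli$ and satisfies $\normal \times \uvec \in \Lp^3$ (the trace is zero for $X^p_N$ and $X^p_{N,0}$, and lies in $L^p_t(\partial\Omega)$ for $W^p_N$), so estimate \eqref{estimate_lifting} applies and its right-hand side is controlled by $\| \uvec \|_{\Wdiv} + \| \uvec \|_{\Wcurl} + \| \normal \times \uvec \|_{L^p(\partial\Omega)^3}$, which is a multiple of the natural norm on each of these spaces, the boundary term simply dropping out for the two $X$-type spaces.

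For the three spaces $X^p_T$, $X^p_{T,0}$ and $W^p_T$, which carry a normal boundary constraint, I would replace Theorem \ref{thm:lifting_to_3/2} by its mirror image, in which the tangential trace hypothesis $\normal \times \uvec \in \Lp^3$ is swapped for the normal trace hypothesis $\normal \cdot \uvec \in \Lp$, the conclusion $\uvec \in \Sobolev{1/p}{p}^3$ and the structure of the estimate being unchanged. This variant is proved by the same argument (it is the $L^p$ analogue of the $X_T$ case treated alongside $X_N$ in \cite{Mon03}), after which the reasoning is word for word the same as in the tangential case. In all six cases one obtains a uniform $\Sobolev{1/p}{p}^3$-bound on bounded sequences, and Grisvard's compact embedding then furnishes the $\Lp^3$-convergent subsequence.

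The only point requiring care is the norm bookkeeping: one must check that the right-hand side of \eqref{estimate_lifting} (and of its normal-trace counterpart) is dominated by the norm of the space under consideration, which is trivial for the four $X$-type spaces because the surviving boundary term vanishes, and is built into the definition for $W^p_N$ and $W^p_T$. The remaining ingredient, namely supplying, or re-running, the normal-trace version of Theorem \ref{thm:lifting_to_3/2}, is the main (and only modest) obstacle; everything else is a routine composition of a continuous embedding with a compact one.
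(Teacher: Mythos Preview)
Your proposal is correct and follows exactly the route the paper takes: the text immediately preceding the corollary already records that Theorem~\ref{thm:lifting_to_3/2} gives a continuous embedding of these spaces into $\Sobolev{1/p}{p}^3$, and then invokes Theorem~1.4.3.2 of \cite{Gri85} for the compact embedding $\Sobolev{1/p}{p}\hookrightarrow\Lp$. Your observation that the $T$-spaces require the normal-trace analogue of Theorem~\ref{thm:lifting_to_3/2} is also acknowledged by the paper (see the note following Theorem~\ref{Friedrich_inequality1}), so there is no discrepancy.
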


\begin{theorem}[Friedrich's inequality]         \label{Friedrich_inequality1}
Consider a bounded $C^1$ domain $\Omega \subset \RR^3$ with a connected boundary. Assume the 
exponent satisfies $1 < p < \infty$. Then there exists a constant $C$ such that for all $\uvec \in W_N^p$
$$
   \| \uvec \|_{\Lp^3} \leq C \left\{  \| \curl \uvec \|_{\Lp^3} + \| \normal \times \uvec \|_{L^p_t(\partial \Omega)} \right\}.
$$
Similarly, there exists a constant $C$ such that for all $\uvec \in W^p_T$
$$
   \| \uvec \|_{\Lp^3} \leq C \left\{  \| \curl \uvec \|_{\Lp^3} + \| \normal \cdot \uvec \|_{L^p(\partial \Omega)} \right\}.
$$
\end{theorem}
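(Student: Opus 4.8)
The plan is to argue by contradiction using the compactness afforded by Corollary \ref{cor:compactness1}, following the classical Peetre--Tartar style of argument that underlies most Friedrich- and Poincar\'e-type inequalities. Suppose the first inequality fails. Then there exists a sequence $\uvec_k \in W^p_N$ with $\| \uvec_k \|_{\Lp^3} = 1$ for all $k$, while
\begin{equation*}
   \| \curl \uvec_k \|_{\Lp^3} + \| \normal \times \uvec_k \|_{L^p_t(\partial \Omega)} \longrightarrow 0 .
\end{equation*}
Since $\Div \uvec_k = 0$, the sequence is bounded in $\Wdiv \cap \Wcurl$ with $\normal \times \uvec_k$ bounded in $L^p_t(\partial\Omega)$, hence bounded in $W^p_N$. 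By Corollary \ref{cor:compactness1}, $W^p_N$ imbeds compactly into $\Lp^3$, so after passing to a subsequence $\uvec_k \to \uvec$ strongly in $\Lp^3$; in particular $\| \uvec \|_{\Lp^3} = 1$. Moreover $\curl \uvec_k \to 0$ in $\Lp^3$ and $\normal \times \uvec_k \to 0$ in $L^p_t(\partial\Omega)$, so (using the continuity of $\gamma_t$ from Theorem \ref{thm:Green_curl2} together with the distributional convergence of the curl) the limit satisfies $\curl \uvec = 0$ in $\Omega$, $\Div \uvec = 0$ in $\Omega$, and $\normal \times \uvec = 0$ on $\partial \Omega$; that is, $\uvec \in X^p_{N,0}$.

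The next step is to show that the only such field is $\uvec = 0$, which contradicts $\| \uvec \|_{\Lp^3} = 1$. Since $\curl \uvec = 0$ and $\Omega$ is (locally) simply connected, Theorem \ref{thm:curl_free} provides $\phi \in \Sobolev{1}{p}$ with $\uvec = \nabla \phi$. Then $\Div \uvec = 0$ forces $\Delta \phi = 0$ in $\Omega$, while $\normal \times \uvec = \normal \times \nabla\phi = 0$ on $\partial \Omega$ means the tangential gradient of $\phi$ vanishes on $\partial\Omega$, so $\phi$ is constant on the (connected) boundary. A constant is an admissible Dirichlet datum, and by uniqueness in Theorem \ref{thm:elliptic2} the harmonic function with constant boundary value is itself that constant; hence $\nabla \phi = 0$ and $\uvec = 0$. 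This is the step I expect to require the most care: correctly passing from $\normal \times \nabla \phi = 0 \in L^p(\partial\Omega)$ to ``$\phi|_{\partial\Omega}$ is constant'' needs the trace theory of $\gamma_0$ into $\Besov{1-1/p}{p}{p}$ and the surjectivity of tangential derivatives, and the reduction to a simply connected local patch (as in the proof of Theorem \ref{thm:lifting_to_3/2}) must be handled so that the global connectedness of $\partial\Omega$ is genuinely used. The second inequality for $W^p_T$ is entirely parallel: a minimizing sequence has a subsequence converging in $\Lp^3$ to some $\uvec \in X^p_{T,0}$, one writes $\uvec = \nabla\phi$ with $\Delta\phi = 0$, and now the boundary condition $\normal \cdot \uvec = \normal \cdot \nabla\phi = 0$ together with the compatibility of the zero Neumann datum and uniqueness up to constants in Theorem \ref{thm:elliptic3} forces $\phi$ constant, hence $\uvec = 0$, again a contradiction.

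I should note the one place the argument is delicate enough to deserve explicit mention in the writeup: the passage to the limit in the curl. One has $\curl \uvec_k \to 0$ in $\Lp^3$ and $\uvec_k \to \uvec$ in $\Lp^3$; testing against $\boldsymbol{\phi} \in C^\infty_0(\Omega)^3$ and using $(\curl\uvec_k,\boldsymbol{\phi}) = (\uvec_k, \curl\boldsymbol{\phi})$ gives $(\uvec,\curl\boldsymbol{\phi}) = 0$, i.e.\ $\curl\uvec = 0$ distributionally, so $\uvec \in \Wcurl$ with vanishing curl; likewise $\Div\uvec = 0$. That $\normal\times\uvec = 0$ then follows because $\uvec_k \to \uvec$ in $\Wcurl$ (the curls converge to $0 = \curl\uvec$) and $\gamma_t$ is continuous on $\Wcurl$ by Theorem \ref{thm:Green_curl2}, so $\gamma_t(\uvec_k) \to \gamma_t(\uvec)$; combined with $\gamma_t(\uvec_k) = \normal\times\uvec_k \to 0$ in $L^p_t(\partial\Omega) \hookrightarrow \Yp$ we get $\gamma_t(\uvec) = 0$, which together with $\uvec \in \Wdiv$ (the limit of divergence-free fields) places $\uvec$ in $X^p_{N,0}$ as claimed.
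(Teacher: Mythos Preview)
Your proposal is correct and follows essentially the same route as the paper: contradiction, compactness of $W^p_N$ into $\Lp^3$ via Corollary \ref{cor:compactness1}, identification of the limit as a curl-free divergence-free field with vanishing tangential trace, representation $\uvec=\nabla\phi$ by Theorem \ref{thm:curl_free}, and then $\Delta\phi=0$ with constant boundary data forcing $\uvec=0$. If anything, your write-up is more careful than the paper's in two spots: you explicitly justify $\gamma_t(\uvec)=0$ for the limit by passing through the continuity of $\gamma_t$ on $\Wcurl$, and you flag the simple-connectedness hypothesis needed to invoke Theorem \ref{thm:curl_free}, which the paper's proof uses silently.
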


Note: to prove the second estimate, we need the analogue of Theorem \ref{thm:lifting_to_3/2} when $\normal \cdot \uvec \in L^p(\partial \Omega)$.

\begin{proof}
The proof is an extension of the one given for Corollary 3.51 in \cite{Mon03}. 

Proceeding by contradiction with the first inequality. 
Suppose that there exists a sequence $\uvec_n$ of functions in $W^p_N$ such that
$$
       \| \curl \uvec_n \|_{\Lp^3} + \| \normal \times \uvec \|_{L^p(\partial \Omega)^3} \leq 1/n,
$$
while $\| \uvec_n \|_{\Lp^3} = 1$ for all $n$. Since $W^p_N$ imbeds compactly into $\Lp^3$, then we can extract a subsequence (renamed $\uvec_n$)
which converges to some $\uvec \in \Lp^3$. On the other hand, for all $\boldsymbol{\phi} \in C^{\infty}_0(\Omega)^3$ the curl
of $\uvec$ as a distribution satisfies
\begin{align*}
    \big|  L_{\curl \uvec} (\boldsymbol{\phi}) \big| =  \big| ( \uvec, \curl \boldsymbol{\phi} ) \big| 
        =   \lim_{n\to \infty} \big| (\uvec_n, \curl \boldsymbol{\phi} ) \big| 
        =  \lim_{n \to \infty} \big| ( \curl \uvec_n, \boldsymbol{\phi}) \big|  = 0.
 \end{align*}
In other words, $\uvec \in \Lp^3$ is such that $\curl \uvec = 0 $.

According to Theorem \ref{thm:curl_free}, there exists $\psi \in \Sobolev{1}{p}$ such that
$\uvec = \nabla \psi$. Since $\Div \uvec = 0,$ then $\Delta \psi = 0$ and along the boundary
$$
        \nabla_{\partial \Omega} \psi := \normal \times \nabla \psi = \normal \times \uvec = 0.
$$
In conclusion, $\psi$ is constant along the connected boundary and, by uniqueness of solutions to Poisson's problem, $\psi = 0$ and $\uvec=0$.
This contradicts our earlier hypothesis concerning the first inequality. A similar argument could be used to demonstrate the estimate over $W^p_T$.
\end{proof}

Rather than attempt to immediately demonstrate that $C^{\infty}(\overline{\Omega})^3$ is dense in \linebreak 
$\Wimp$, we will first show
that $C^{\infty}(\overline{\Omega})^3$ is dense in the auxiliary space
$$
      \widetilde{W}^p_{\text{{imp}}}(\text{{curl}};\Omega) = 
          \left\{  \uvec \in \Wimp \,\, \big| \,\, \normal \cdot ( \curl \uvec ) = 0 \text{ on $\partial \Omega$} \right\}.
$$

\begin{lemma} \label{lem:auxiliary}
The functions in the subset $ \widetilde{W}^p_{\text{\emph{imp}}}(\text{\emph{curl}};\Omega) $ of $ W^p_{\text{\emph{imp}}}(\text{\emph{curl}};\Omega) $
can be approximated to arbitrary accuracy by functions in $C^{\infty}(\overline{\Omega})^3$.
\end{lemma}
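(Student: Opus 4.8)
\textbf{Proof plan for Lemma \ref{lem:auxiliary}.}

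The plan is to adapt the duality argument used repeatedly above (in Theorems \ref{thm:densityWdiv} and \ref{thm:densityWcurl}, and Lemmas \ref{lemma:Wdiv2}, \ref{lemma:Wcurl2}): I will show that any continuous linear functional $\ell$ on $\widetilde W^p_{\text{imp}}(\text{curl};\Omega)$ which vanishes on $C^\infty(\overline\Omega)^3$ must vanish identically, and then invoke Theorem 3.5 of \cite{Rud91}. First I would fix the correct embedding. An element $\uvec$ of $\widetilde W^p_{\text{imp}}(\text{curl};\Omega)$ carries the data $(\uvec,\curl\uvec,\gamma_t(\uvec))$ with $\uvec,\curl\uvec\in\Lp^3$ and $\gamma_t(\uvec)=\normal\times\uvec\in L^p_t(\partial\Omega)$, so the map
$$
   \uvec \longmapsto \bigl(\uvec,\ \curl\uvec,\ \normal\times\uvec\bigr) \in L^p(\Omega^{(6)}) \times L^p(\partial\Omega)^3
$$
is an isometry onto a closed subspace. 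By Hahn--Banach, $\ell$ extends to the product space, and since the dual of $L^p(\Omega)$ is $L^q(\Omega)$ and the dual of $L^p(\partial\Omega)^3$ is $L^q(\partial\Omega)^3$, there exist $\vvec,\mathbf{w}\in L^q(\Omega)^3$ and $\mathbf{g}\in L^q(\partial\Omega)^3$ with
$$
   \ell(\uvec) = (\uvec,\vvec)_\Omega + (\curl\uvec,\mathbf{w})_\Omega + (\normal\times\uvec,\mathbf{g})_{\partial\Omega}
$$
for all $\uvec$ in the space.

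Next I would test against $\boldsymbol{\phi}\in C^\infty_0(\Omega)^3$, whose tangential trace vanishes; this kills the boundary term and gives, exactly as in Theorem \ref{thm:densityWcurl}, that $\vvec = -\curl\mathbf{w}$ in the sense of distributions, hence $\mathbf{w}\in W^q(\text{curl};\Omega)$. Substituting back and applying Green's formula \eqref{Green_curl2} (valid since $\uvec\in\Wcurl$ and $\mathbf{w}\in W^q(\text{curl};\Omega)$), for any $\uvec\in\widetilde W^p_{\text{imp}}(\text{curl};\Omega)$ we get
$$
   \ell(\uvec) = (\uvec,-\curl\mathbf{w})_\Omega + (\curl\uvec,\mathbf{w})_\Omega + (\normal\times\uvec,\mathbf{g})_{\partial\Omega}
              = \langle \gamma_t(\uvec),\gamma_T(\mathbf{w})\rangle + (\normal\times\uvec,\mathbf{g})_{\partial\Omega}.
$$
So the functional is entirely a boundary functional in $\gamma_t(\uvec)=\normal\times\uvec$, which now genuinely lies in $L^p_t(\partial\Omega)$ for elements of our space. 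The remaining task is to show this boundary functional vanishes on all of $\widetilde W^p_{\text{imp}}$ given that it vanishes on $C^\infty(\overline\Omega)^3$. For a smooth $\boldsymbol{\phi}$ one has $\gamma_T(\mathbf{w})$ tested against $\normal\times\boldsymbol{\phi}$ plus the $L^q$--$L^p$ boundary pairing with $\mathbf{g}$; the key point is that the set $\{\normal\times\boldsymbol{\phi}\ :\ \boldsymbol{\phi}\in C^\infty(\overline\Omega)^3\}$ is dense in $L^p_t(\partial\Omega)$ (every smooth tangential field on a $C^1$ surface extends smoothly into $\overline\Omega$), so the vanishing on smooth functions forces the combined boundary functional to be zero on a dense subset of $L^p_t(\partial\Omega)$, hence on all of it, and therefore $\ell\equiv0$.

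The main obstacle will be the last step: one must be careful that the functional $\mathbf{s}\mapsto\langle\mathbf{s},\gamma_T(\mathbf{w})\rangle$, a priori only continuous on $\Yp=\gamma_t(\Wcurl)$ by Theorem \ref{thm:Green_curl2}, combined with the genuinely $L^q(\partial\Omega)^3$ functional $\mathbf{s}\mapsto(\mathbf{s},\mathbf{g})_{\partial\Omega}$, is continuous for the $L^p_t(\partial\Omega)$ topology on the smaller space $\widetilde W^p_{\text{imp}}(\text{curl};\Omega)$ --- this uses that on this subspace the $\Yp$-norm of $\gamma_t(\uvec)$ is controlled by $\|\normal\times\uvec\|_{L^p(\partial\Omega)}$ (indeed $\uvec$ itself is an admissible competitor in the infimum \eqref{defn:norm_Yp}, bounding the $\Yp$-norm by $\|\uvec\|_{\Wcurl}$, which is already part of the $\Wimp$-norm). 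Once that continuity is in hand, the density of smooth tangential traces in $L^p_t(\partial\Omega)$ closes the argument. I would also note explicitly that the extra constraint $\normal\cdot(\curl\uvec)=0$ defining $\widetilde W^p_{\text{imp}}$ is what makes $\curl$ of an element a \emph{tangential-trace-compatible} divergence-free field, which is exactly the structural fact that will be exploited in the subsequent (harder) density proof for the full space $\Wimp$; here it enters only through Green's formula \eqref{Green_curl2} being applied cleanly.
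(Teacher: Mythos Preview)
Your duality argument has a genuine gap at exactly the point you flag as ``the main obstacle,'' and the resolution you propose does not work. After reducing $\ell$ to the boundary functional
\[
   \ell(\uvec)=\langle\gamma_t(\uvec),\gamma_T(\mathbf{w})\rangle+(\normal\times\uvec,\mathbf{g})_{\partial\Omega},
\]
you need to pass from vanishing on smooth traces to vanishing on all of $\gamma_t\bigl(\widetilde W^p_{\text{imp}}\bigr)$. For that you approximate $\normal\times\uvec$ in $L^p_t(\partial\Omega)$ by smooth tangential fields. The term $(\,\cdot\,,\mathbf{g})_{\partial\Omega}$ passes to the limit, but the pairing $\langle\,\cdot\,,\gamma_T(\mathbf{w})\rangle$ is only continuous for the $\Yp$ topology (Theorem~\ref{thm:Green_curl2}), not for the $L^p_t$ topology. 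Your justification that ``the $\Yp$-norm of $\gamma_t(\uvec)$ is controlled by $\|\normal\times\uvec\|_{L^p(\partial\Omega)}$'' is not what your parenthetical actually shows: taking $\uvec$ as a competitor in \eqref{defn:norm_Yp} gives $\|\gamma_t(\uvec)\|_{\Yp}\le\|\uvec\|_{\Wcurl}$, which is the interior $\Wcurl$ norm, not the boundary $L^p$ norm. Controlling $\|\cdot\|_{\Yp}$ by $\|\cdot\|_{L^p_t(\partial\Omega)}$ on surface-divergence-free fields would amount to a tangential-trace lifting estimate that you have not proved and that is essentially as deep as the density statement itself. A further warning sign: as written, your argument never uses the defining constraint $\normal\cdot(\curl\uvec)=0$; Green's formula \eqref{Green_curl2} already holds for all $\uvec\in\Wcurl$ and $\mathbf{w}\in W^q(\text{curl};\Omega)$. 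If the argument were correct it would prove density in the full space $\Wimp$ directly, bypassing the very splitting into Lemma~\ref{lem:auxiliary} and the subsequent theorem.

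The paper proceeds quite differently and constructively. Given $\uvec\in\widetilde W^p_{\text{imp}}$, it uses the $\inf$--$\sup$ condition of Lemma~\ref{Bilinear} and the Babu\v{s}ka--Lax--Milgram theorem to produce $\mathbf{A}\in X^p_{T,0}$ solving a curl--curl variational problem, and then decomposes $\uvec=\curl\mathbf{A}+(\uvec-\curl\mathbf{A})$. One shows $\curl(\curl\mathbf{A})=0$, so $\curl\mathbf{A}=\nabla\xi$ for some $\xi\in\Sobolev{1}{p}$; the hypothesis $\normal\times\uvec\in L^p_t(\partial\Omega)$ forces $\gamma_0(\xi)\in W^{1,p}(\partial\Omega)$, and Theorem~\ref{thm:Crouzeix} (density in the graph space $G$) then yields smooth approximants of $\curl\mathbf{A}$. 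The remainder $\uvec-\curl\mathbf{A}$ is shown to have vanishing tangential trace, hence lies in $W^p_0(\text{curl};\Omega)$ by Lemma~\ref{lemma:Wcurl2} and is approximated by $C^\infty_0(\Omega)^3$. The constraint $\normal\cdot(\curl\uvec)=0$ is used in verifying that the variational identity extends from $X^q_{T,0}$ to $X^q_T$, which is what makes the tangential traces match. This is where the real work is, and it is not captured by the duality shortcut.
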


We now show how this lemma, whose proof we defer to the end of this section, can be used to prove the next theorem.

\begin{theorem}
The set $C^{\infty}(\overline{\Omega})^3$ is dense in $ W^p_{\text{\emph{imp}}}(\text{\emph{curl}};\Omega) $.
\end{theorem}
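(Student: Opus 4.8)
The plan is to reduce the density of $C^\infty(\overline\Omega)^3$ in $\Wimp$ to the already-proven density in the auxiliary space $\widetilde W^p_{\text{imp}}(\text{curl};\Omega)$ from Lemma~\ref{lem:auxiliary}. The key observation is that any $\uvec\in\Wimp$ differs from an element of the auxiliary space by a gradient: if I can find $\phi\in W^{1,p}(\Omega)$ with $\normal\cdot\nabla(\curl\phivec)$-type corrections, I reduce the general case to the auxiliary one. More precisely, given $\uvec\in\Wimp$, set $\eta:=\normal\cdot(\curl\uvec)$; since $\curl\uvec\in\Lp^3$ and $\Div(\curl\uvec)=0$ we have $\curl\uvec\in\Wdiv$, so by Theorem~\ref{thm:Green_div} the normal trace $\eta=\gamma_n(\curl\uvec)$ is well-defined in $\Besov{-1/p}{p}{p}$, and by the Green's formula \eqref{Green_div} applied with $v\equiv 1$ (or more carefully using the divergence-free condition) the compatibility $\int_{\partial\Omega}\eta\,d\boldsymbol\sigma=0$ holds.

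Next I would use Theorem~\ref{thm:elliptic3} to solve $-\Delta q = 0$ in $\Omega$ with $\normal\cdot\nabla q = \eta$ on $\partial\Omega$, obtaining $q\in W^{1,p}(\Omega)$, and then I want a vector field $\wvec$ with $\curl\wvec$ having prescribed normal trace $\eta$ so that $\uvec-\wvec$ lands in the auxiliary space. The natural candidate is to take $\wvec$ via Theorem~\ref{thm:divergence_free} or a direct construction so that $\curl\wvec = \nabla q$ is divergence-free with the right normal component; one must check $\wvec\in\Wimp$, i.e. that $\normal\times\wvec\in L^p_t(\partial\Omega)$, which follows because $\wvec$ can be chosen in $W^{1,p}(\Omega)^3$ whose full trace lies in $\Besov{1-1/p}{p}{p}^3\hookrightarrow \Lp^3$ (here I also need $\normal\cdot\wvec=0$, arranged by subtracting a suitable gradient of a harmonic function, or by building $\wvec$ tangential from the start). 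Then $\uvec-\wvec\in\widetilde W^p_{\text{imp}}(\text{curl};\Omega)$, which is approximated in its (and hence in the $\Wimp$) norm by $C^\infty(\overline\Omega)^3$ functions by Lemma~\ref{lem:auxiliary}, while $\wvec$ itself lies in $W^{1,p}(\Omega)^3$ and so is approximated in $W^{1,p}$—hence a fortiori in $\Wimp$—by smooth functions by density of $C^\infty(\overline\Omega)^3$ in $W^{1,p}(\Omega)^3$. Adding the two approximations gives the result.

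The main obstacle I anticipate is the construction of the lifting $\wvec$: I need a single vector field in $W^{1,p}(\Omega)^3$ that simultaneously has $\curl\wvec$ with the correct normal trace $\eta$ (so that subtracting it kills the obstruction $\normal\cdot\curl\uvec$), has $\normal\cdot\wvec=0$ (so it stays in the right trace space and does not disturb anything else), and is controlled in norm. One clean way: first solve $-\Delta\theta=0$, $\normal\cdot\nabla\theta=\eta$ as above, then $\nabla\theta\in\Wdiv$ is divergence-free with $\gamma_n(\nabla\theta)=\eta$ and with zero fluxes across the pieces of the boundary (by connectedness arguments as in Theorem~\ref{thm:divergence_free}), so Theorem~\ref{thm:divergence_free} yields $\mathbf A\in W^{1,p}(\Omega)^3$ with $\curl\mathbf A=\nabla\theta$ and $\Div\mathbf A=0$; then correct $\mathbf A$ by a further gradient $\nabla r$ (with $r$ solving an appropriate Neumann problem) to enforce $\normal\cdot(\mathbf A+\nabla r)=0$ without changing the curl. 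Setting $\wvec:=\mathbf A+\nabla r$ gives what is needed, and the continuous dependence in each of Theorems~\ref{thm:elliptic3}, \ref{thm:divergence_free} and \ref{thm:Green_div} assembles into the bound $\|\wvec\|_{W^{1,p}}\le C\|\uvec\|_{\Wimp}$; the only delicate point is verifying the flux-zero compatibility hypotheses of Theorem~\ref{thm:divergence_free}, which hold here because the boundary is connected so there is a single component and the single flux $\int_{\partial\Omega}\eta=0$ is exactly the compatibility condition already established.
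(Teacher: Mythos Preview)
Your approach is essentially the same as the paper's: both decompose $\uvec=\mathbf A+(\uvec-\mathbf A)$ with $\mathbf A\in W^{1,p}(\Omega)^3$ satisfying $\curl\mathbf A=\nabla\theta$ for a harmonic $\theta$ whose Neumann data is $\eta=\gamma_n(\curl\uvec)$, so that $\uvec-\mathbf A$ falls into the auxiliary space and Lemma~\ref{lem:auxiliary} applies. The only cosmetic difference is that the paper obtains $\theta$ (called $\phi$ there) from the variational problem $(\nabla\phi,\nabla\psi)=(\curl\uvec,\nabla\psi)$ via Babu\v{s}ka--Lax--Milgram, whereas you invoke Theorem~\ref{thm:elliptic3} directly; these are the same Neumann problem.

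One unnecessary complication in your write-up: you do \emph{not} need to enforce $\normal\cdot\wvec=0$. The membership $\wvec\in\Wimp$ requires only $\normal\times\wvec\in L^p_t(\partial\Omega)$, and $\normal\times\wvec$ is automatically tangential (i.e.\ $\normal\cdot(\normal\times\wvec)=0$ identically), so the condition reduces to $\normal\times\wvec\in L^p(\partial\Omega)^3$, which already follows from $\wvec=\mathbf A\in W^{1,p}(\Omega)^3$ via the trace embedding $\Besov{1-1/p}{p}{p}\hookrightarrow L^p(\partial\Omega)$. The extra correction by $\nabla r$ can be dropped entirely.
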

\begin{proof}
The proof given here is a straightforward adaptation of Theorem 3.54 in \cite{Mon03}. As in the proof of Theorem \ref{thm:lifting_to_3/2},
it suffices to demonstrate the density over subsets that are simply connected with connected boundaries.

Choose $\uvec \in \Wimp$ and consider the variational problem : find $\phi \in \Sobolev{1}{p} / \RR$ such that
$$
     \int_{\Omega} \nabla \phi \cdot \nabla \psi \, d\xvec = \int_{\Omega} \curl \uvec \cdot \nabla \psi \, d\xvec, \quad \forall \psi \in \Sobolev{1}{q} /  \RR.
$$ 
It is an exercise to show that the bilinear form $B: \Sobolev{1}{p} / \RR \times \Sobolev{1}{q} / \RR  \to \RR$
$$
      B(\phi, \psi) = \int_{\Omega} \nabla \phi \cdot \nabla \psi \, d\xvec
$$
is continuous, non-degenerate and satisfies an $\inf$-$\sup$ condition of the form \eqref{inf-sup}.
By the Babu\v{s}ka-Lax-Milgram Theorem, Theorem \ref{thm:Lax-Milgram}, this problem possesses a unique solution.

Evaluating the divergence of $\nabla \phi$ in weak form, we find that
$$
      - ( \nabla \phi, \nabla \xi ) = - (\curl \uvec, \nabla \xi) = 0 , \quad \forall \xi \in C^{\infty}_0(\Omega), 
$$
or in other words $\Div \nabla \phi = 0$.
By adjusting the constant term of $\phi$, we can satisfy condition \eqref{compatibility_condition} of Theorem \ref{thm:divergence_free} 
and deduce the existence of $\mathbf{A} \in \Sobolev{1}{p}^3$
such that $\curl \mathbf{A} = \nabla \phi$, $\Div \mathbf{A} = 0$.  Since $C^{\infty}(\overline{\Omega})^3$ is
dense in $\Sobolev{1}{p}^3$, the same smooth functions which approximate $\mathbf{A}$ can be used in $\Wimp$.

Finally, we will show that the second term in the decomposition $\uvec = \mathbf{A} + (\uvec - \mathbf{A})$
belongs to $\widetilde{W}^p_{\text{imp}}(\text{curl};\Omega)$ and therefore, by Lemma \ref{lem:auxiliary}, can also be approximated by functions 
in $C^{\infty}(\overline{\Omega})^3$. For any $\psi \in \Sobolev{1}{q} $, Green's formula \eqref{Green_curl1} can be used to show
\begin{align*}
     \int_{\partial \Omega}  \normal \cdot \curl ( \uvec - \mathbf{A}) \psi \, d\xvec = & 
           \int_{\Omega} \curl (\uvec - \mathbf{a}) \cdot \nabla \psi \, d\xvec = \int_{\Omega} (\curl \uvec - \nabla \phi ) \cdot \nabla \psi \, d\xvec  = 0,
\end{align*}
and therefore $\normal \cdot \curl (\uvec - \mathbf{A}) = 0$ on $\partial \Omega$ and $\uvec - \mathbf{A} \in \widetilde{W}^p_{\text{imp}}(\text{curl};\Omega)$.
\end{proof}

We now return to the proof of Lemma \ref{lem:auxiliary} whose proof is more involved in the $L^p$ setting
than the corresponding Lemma 3.53 in \cite{Mon03}. We therefore propose to begin with the following important
preliminary result.

\begin{lemma}   \label{Bilinear}
Consider a bounded simply connected domain $\Omega \subset \RR^3$ with a connected $C^1$ boundary. Assume the exponent
satisfies $1 < p < \infty$. Then the bilinear form $ B : X^p_{T,0} \times X^q_{T,0} \to \RR$ defined by
$$
       B(\uvec, \vvec ) := \int_{\Omega} \curl \uvec \cdot \curl \vvec \, d\xvec,
$$
is continuous, non-degenerate and satisfies the $\inf$-$\sup$ condition \eqref{inf-sup}. 
\end{lemma}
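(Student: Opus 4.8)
The plan is to check the three properties in turn, with continuity immediate and the other two resting on Friedrich's inequality (Theorem \ref{Friedrich_inequality1}) and the vector-potential construction of Theorem \ref{thm:divergence_free}. Continuity is just H\"older's inequality, $|B(\uvec,\vvec)|\le\|\curl\uvec\|_{\Lp^3}\,\|\curl\vvec\|_{L^q(\Omega)^3}\le\|\uvec\|_{X^p_{T,0}}\|\vvec\|_{X^q_{T,0}}$. The structural fact I would record first is that on $X^p_{T,0}$ the norm is equivalent to $\uvec\mapsto\|\curl\uvec\|_{\Lp^3}$: since $\Div\uvec=0$ and $\normal\cdot\uvec=0$, the second estimate of Theorem \ref{Friedrich_inequality1}, applied with vanishing normal trace, gives $\|\uvec\|_{\Lp^3}\le C\|\curl\uvec\|_{\Lp^3}$, so $\|\uvec\|_{X^p_{T,0}}\simeq\|\curl\uvec\|_{\Lp^3}$, and likewise over $X^q_{T,0}$.

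The second ingredient is that $\curl$ maps $X^p_{T,0}$ \emph{onto} the space $V^p=\{\mathbf{f}\in\Lp^3:\Div\mathbf{f}=0\}$ of distributionally divergence-free fields. Given such an $\mathbf{f}$, its total normal flux vanishes automatically, so the compatibility hypothesis of Theorem \ref{thm:divergence_free} holds on a domain with connected boundary, and that theorem produces $\mathbf{A}\in W^{1,p}(\Omega)^3$ with $\curl\mathbf{A}=\mathbf{f}$, $\Div\mathbf{A}=0$, $\|\mathbf{A}\|_{W^{1,p}(\Omega)^3}\le C\|\mathbf{f}\|_{\Lp^3}$. I then correct the normal trace: let $\chi$ solve $\Delta\chi=0$ in $\Omega$, $\normal\cdot\nabla\chi=-\normal\cdot\mathbf{A}$ on $\partial\Omega$, which is solvable by Theorem \ref{thm:elliptic3} since $\int_{\partial\Omega}\normal\cdot\mathbf{A}=\int_\Omega\Div\mathbf{A}=0$, with $\|\chi\|_{\Sobolev{1}{p}}\le C\|\mathbf{A}\|_{W^{1,p}(\Omega)^3}$. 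Then $\vvec:=\mathbf{A}+\nabla\chi$ satisfies $\curl\vvec=\mathbf{f}$, $\Div\vvec=\Delta\chi=0$, $\normal\cdot\vvec=0$, hence $\vvec\in X^p_{T,0}$ with $\|\vvec\|_{X^p_{T,0}}\le C\|\mathbf{f}\|_{\Lp^3}$ by the previous paragraph. Thus $\curl:X^p_{T,0}\to V^p$ is a topological isomorphism, and similarly with $p$ replaced by $q$.

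Now the $\inf$-$\sup$ condition reduces, via these two facts, to producing for each $\uvec\in X^p_{T,0}$ a $\vvec\in X^q_{T,0}$ with $B(\uvec,\vvec)\gtrsim\|\curl\uvec\|_{\Lp^3}\|\curl\vvec\|_{L^q(\Omega)^3}$. Set $\mathbf{f}=\curl\uvec$ and $\mathbf{f}_*=|\mathbf{f}|^{p-2}\mathbf{f}\in L^q(\Omega)^3$, so $\int_\Omega\mathbf{f}\cdot\mathbf{f}_*=\|\mathbf{f}\|_{\Lp^3}^p$ and $\|\mathbf{f}_*\|_{L^q(\Omega)^3}=\|\mathbf{f}\|_{\Lp^3}^{p-1}$. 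Since $\mathbf{f}_*$ need not be divergence free, project it: let $\psi\in W^{1,q}_0(\Omega)$ solve $-\Delta\psi=-\Div\mathbf{f}_*\in\Sobolev{-1}{q}$, solvable with $\|\psi\|_{\Sobolev{1}{q}}\le C\|\mathbf{f}_*\|_{L^q(\Omega)^3}$ by Theorem \ref{thm:elliptic2} (exponent $q$, zero boundary data), and put $\mathbf{g}=\mathbf{f}_*-\nabla\psi$, so $\Div\mathbf{g}=0$, $\mathbf{g}\in V^q$, $\|\mathbf{g}\|_{L^q(\Omega)^3}\le C\|\mathbf{f}\|_{\Lp^3}^{p-1}$. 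By the surjectivity above, $\mathbf{g}=\curl\vvec$ for some $\vvec\in X^q_{T,0}$ with $\|\vvec\|_{X^q_{T,0}}\le C\|\mathbf{f}\|_{\Lp^3}^{p-1}$. Finally $B(\uvec,\vvec)=\int_\Omega\mathbf{f}\cdot\mathbf{g}=\|\mathbf{f}\|_{\Lp^3}^p-\int_\Omega\curl\uvec\cdot\nabla\psi$, and the last integral vanishes because $\Div\curl\uvec=0$ distributionally and $\psi\in W^{1,q}_0(\Omega)$ (approximate $\psi$ by $C^\infty_0(\Omega)$ functions and use Green's formula \eqref{Green_curl1}). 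Hence $B(\uvec,\vvec)=\|\mathbf{f}\|_{\Lp^3}^p$, and dividing by $\|\uvec\|_{X^p_{T,0}}\|\vvec\|_{X^q_{T,0}}\le C\|\mathbf{f}\|_{\Lp^3}^{p}$ (Friedrich norm-equivalence on the left factor) yields \eqref{inf-sup}. Non-degeneracy with respect to the second argument is the identical construction with the roles of $p$ and $q$ interchanged: for $\vvec\in X^q_{T,0}\setminus\{0\}$, Friedrich forces $\curl\vvec\ne0$, and one builds $\uvec\in X^p_{T,0}$ with $B(\uvec,\vvec)=\|\curl\vvec\|_{L^q(\Omega)^3}^q>0$.

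The main obstacle is the middle step, namely realizing an arbitrary divergence-free $L^q$ field as the curl of an element of $X^q_{T,0}$ with vanishing normal trace, together with the projection estimate powering the $\inf$-$\sup$ bound: this is in essence an $L^p$ Helmholtz-type decomposition, and the point is to extract it cleanly from Theorems \ref{thm:divergence_free}, \ref{thm:elliptic2} and \ref{thm:elliptic3} without circularity. The auxiliary facts used along the way --- that a $W^{1,p}$ function with zero trace lies in $W^{1,p}_0$, boundedness of $\Div:\Lp^3\to\Sobolev{-1}{p}$, and density of $C^\infty_0(\Omega)$ in $W^{1,q}_0(\Omega)$ --- are routine.
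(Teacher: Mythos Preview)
Your argument is correct and takes a constructive route distinct from the paper's contradiction argument. The paper assumes the $\inf$-$\sup$ condition fails along a sequence $\mathbf{A}_n\in X^p_{T,0}$ with $\|\curl\mathbf{A}_n\|_{\Lp^3}=1$, decomposes an arbitrary test function $\boldsymbol{\psi}\in C^\infty_0(\Omega)^3$ as $\curl\boldsymbol{\chi}+\nabla\xi$ (a homogeneous Dirichlet problem for $\xi$, then Theorem~\ref{thm:divergence_free} for $\boldsymbol{\chi}$), reduces $(\curl\mathbf{A}_n,\boldsymbol{\psi})$ to $(\curl\mathbf{A}_n,\curl\boldsymbol{\chi})$, and invokes $L^p$--$L^q$ duality to force $\|\curl\mathbf{A}_n\|_{\Lp^3}\to0$. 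You instead establish that $\curl:X^p_{T,0}\to V^p$ is a topological isomorphism (vector potential from Theorem~\ref{thm:divergence_free} plus a Neumann correction of the normal trace via Theorem~\ref{thm:elliptic3}) and then build the near-optimal second argument directly, projecting the duality element $|\curl\uvec|^{p-2}\curl\uvec$ onto $V^q$ through a homogeneous Dirichlet problem. Both routes rest on exactly the same two pillars, Friedrich's inequality and Theorem~\ref{thm:divergence_free}, and your splitting $\mathbf{f}_*=\mathbf{g}+\nabla\psi$ is essentially the same decomposition the paper applies to $\boldsymbol{\psi}$; the difference is that you run it forward on the specific duality element rather than by contradiction on generic $\boldsymbol{\psi}$. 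What your approach buys is an explicit $\inf$-$\sup$ constant and, usefully, it makes the normal-trace correction $\mathbf{A}\mapsto\mathbf{A}+\nabla\chi$ visible---this step is needed so that the vector potential actually lands in $X^q_{T,0}$, a point the paper's write-up leaves implicit. One cosmetic remark: for the vanishing of $\int_\Omega\curl\uvec\cdot\nabla\psi$, the distributional identity $\Div(\curl\uvec)=0$ tested against $\psi\in W^{1,q}_0(\Omega)$ is the more direct justification than \eqref{Green_curl1}, though your citation also works since $\curl\nabla\psi_n=0$ and $\gamma_0(\nabla\psi_n)=0$.
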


\begin{proof}
The bilinear form is easily seen to be continuous since, by  Theorem \eqref{Friedrich_inequality1}, 
$\| \curl \uvec \|_{\Lp^3}$ and $\| \curl \vvec \|_{L^q(\Omega)^3}$ are
norms over respectively $X^p_{T,0}$ and $X^q_{T,0}$.

We will only demonstrate the $\inf$-$\sup$ condition because the  "symmetry" of $B$ will imply 
the non-degeneracy. We will proceed by contradiction and therefore, we assume that there exists a sequence $\mathbf{A}_n \in X^p_{T,0}$
for which
\begin{equation}   \label{Bilinear:eq1}
      \sup_{{\phivec \in X^q_{T,0} : \| \phivec \| = 1}} \big|  ( \curl \mathbf{A}_n, \curl \phivec ) \big| < 1/n ,
\end{equation}
while $\| \curl \mathbf{A}_n \|_{\Lp^3}=1$.

Pick any $\boldsymbol{\psi} \in C^{\infty}_0(\Omega)^3$ and construct 
$\xi \in C^{\infty}_0(\overline{\Omega})$ satisfying 
$$
            \Delta \xi = \Div \boldsymbol{\psi} \text{ on $\Omega$, and } \xi = 0 \text{ on $\partial \Omega$.} 
$$
Now let $\boldsymbol{\chi}$ be the function in $C^{\infty}(\overline{\Omega})^3$ given by
$\curl \boldsymbol{\chi} = \boldsymbol{\psi} - \nabla \xi$ and $\Div \boldsymbol{\chi} = 0$, as guaranteed by Theorem \ref{thm:divergence_free}.
Returning to property \eqref{Bilinear:eq1}, we have that for any $\boldsymbol{\psi} \in C^{\infty}_0(\Omega)^3$,
\begin{align*}
            \big| (\curl \mathbf{A}_n , \boldsymbol{\psi} )_{\Omega} \big| = & \big|  (\curl \mathbf{A}_n ,\curl \boldsymbol{\chi} + \nabla \xi )_{\Omega} \big|  \\
            = & \big|  (\curl \mathbf{A}_n ,\curl \boldsymbol{\chi} )_{\Omega} - ( \Div \curl \mathbf{A}_n,  \xi )_{\Omega}   
                +   ( \normal \cdot \curl \mathbf{A}_n, \xi)_{\partial \Omega}   \big|  \\
             = &     \big|  (\curl \mathbf{A}_n ,\curl \boldsymbol{\chi} )_{\Omega} \big| .  
\end{align*}
By the density result of $C^{\infty}_0(\Omega)^3$ in $L^q(\Omega)^3$ and Lemma 2.7 of \cite{Ada03}, this implies that
$\|  \curl \mathbf{A}_n \|_{\Lp^3} < 1/n$. This contradicts our earlier assumption that $\|  \curl \mathbf{A}_n \|_{\Lp^3} = 1$ and proves the result.
\end{proof}

We complete this section with a proof of Lemma \ref{lem:auxiliary}.

\begin{proof}[Lemma \ref{lem:auxiliary}]
As we mentionned earlier in Theorem \ref{thm:lifting_to_3/2}, the proof of a density result can be demonstrated locally and we may assume
that $\Omega$ is a bounded simply connected domain with a connected $C^1$ boundary. Choose $\uvec \in \widetilde{W}^p_{\text{imp}}(\text{curl};\Omega)$.
Lemma \ref{Bilinear} states that $B$ satisfies the conditions of the Babu\v{s}ka-Lax-Milgram theorem and therefore, that there exists a unique solution 
$\mathbf{A} \in X^p_{T,0}$ to 
\begin{equation}   \label{auxiliary:eq1}
              \int_{\Omega} \curl \mathbf{A} \cdot \curl \boldsymbol{\phi} \, d\xvec 
                     =  \int_{\Omega}  \uvec \cdot \curl \boldsymbol{\phi} - \curl \uvec \cdot \boldsymbol{\phi} \, d\xvec, \quad \forall \boldsymbol{\phi} \in W^q_{T,0}.
\end{equation}
For any $\boldsymbol{\psi} \in X^q_{T}$, Green's formula shows that
$$
      \int_{\Omega} \Div \boldsymbol{\psi} \, d\xvec = \int_{\partial \Omega}  \normal \cdot \boldsymbol{\psi} \, d\boldsymbol{\sigma} = 0.
$$
Hence, Theorem \ref{thm:curl_free} states that we can construct $\xi \in \Sobolev{1}{q}$ satisfying
$$
       \Delta \xi = \nabla \cdot \boldsymbol{\psi} \quad \text{on $\Omega$, and } \quad \normal \cdot \nabla \xi = 0 \quad \text{on $\partial \Omega$.}
$$
Then any $\boldsymbol{\psi} \in X^q_T$ can be decomposed as $ \nabla \xi + \big( \boldsymbol{\psi} - \nabla \xi \big)$ where the two
 two components are such that
$\nabla \xi \in X^q_{T}$ and $\boldsymbol{\psi} - \nabla \xi \in X^q_{T,0}$.  In fact, if $\mathbf{A}$ satisfies equation \eqref{auxiliary:eq1}, 
then exploiting the previous decomposition  we find that for all $\boldsymbol{\psi} \in X^q_T$,
\begin{align}
     \int_{\Omega}  \curl \mathbf{A}  \cdot \curl \boldsymbol{\psi} \, d\xvec = & 
          \int_{\Omega} \curl \mathbf{A} \cdot \curl \big( \boldsymbol{\psi} - \nabla \xi \big) \, d\xvec         \nonumber  \\
         = & \int_{\Omega}  \uvec \cdot \curl\big( \boldsymbol{\psi} - \nabla \xi \big)  - \curl \uvec \cdot \big( \boldsymbol{\psi} - \nabla \xi \big) \, d\xvec     \nonumber  \\ 
         = & \int_{\Omega}  \uvec \cdot \curl  \boldsymbol{\psi}   - \curl \uvec \cdot \boldsymbol{\psi} \, d\xvec     \nonumber  \\
             &    + \int_{\Omega} \uvec \cdot \curl \big( -\nabla \xi \big) \, d\xvec 
                + \int_{\partial \Omega}  \normal \times \uvec \cdot ( -\nabla \xi ) \, d\boldsymbol{\sigma}       \nonumber   \\
         = &  \int_{\Omega}  \uvec \cdot \curl   \boldsymbol{\psi}   - \curl \uvec \cdot  \boldsymbol{\psi} \, d\xvec  .       \label{auxiliary:eq2}
\end{align}
By Theorems \ref{thm:densityWdiv} and   \ref{thm:densityWcurl}, constraint \eqref{auxiliary:eq1} must also be satisfied by all
$\boldsymbol{\psi} \in C^{\infty}_0(\Omega)^3$. Furthermore, Green's formula \eqref{Green_curl2} applied to the right-hand side of \eqref{auxiliary:eq1} provides
the identity
$$
      \int_{\Omega} \curl \mathbf{A} \cdot \curl \boldsymbol{\psi} \, d\xvec = \langle  \gamma_t(\uvec) , \gamma_T( \boldsymbol{\psi}) \rangle_{\partial \Omega} = 0,
$$ 
and therefore, in a distributional sense, $\curl (\curl \mathbf{A}) = 0$.

For the original $\uvec \in \widetilde{W}^p_{\text{imp}}(\text{curl};\Omega)$, we write $\uvec = \curl \mathbf{A} + (\uvec - \curl \mathbf{A})$. Applying
Green's formula \eqref{Green_curl1} to both sides of  \eqref{auxiliary:eq2}, we find that
$$
        \langle \gamma_t( \curl \mathbf{A} ) , \gamma_T( \boldsymbol{\psi} )  \rangle_{\partial \Omega} 
            =    \langle \gamma_t( \uvec ) , \gamma_T(  \boldsymbol{\psi} )  \rangle_{\partial \Omega} ,
$$
that is $\normal \times (\uvec - \curl \mathbf{A}) = 0$. In particular,  $\uvec - \curl \mathbf{A}$ belongs to $ W^p_0(\text{curl};\Omega)$ 
and can be approximated by functions in $C^{\infty}_0(\Omega)^3$.

Continuing with the decomposition $\uvec = \curl \mathbf{A} + (\uvec - \curl \mathbf{A})$ of 
$\uvec \in \widetilde{W}^p_{\text{imp}}(\text{curl};\Omega)$, we now 
show that $\curl \mathbf{A}$ can be approximated by smooth functions. Since $\curl (\curl \mathbf{A}) = 0$, there exists $\xi \in \Sobolev{1}{p}$
for which $\curl \mathbf{A} = \nabla \xi$. The gradient along the boundary is given by
$$
     \nabla_{\partial \Omega} \xi = ( \normal \times \nabla \xi) \times \normal = ( \normal \times \curl \mathbf{A} ) \times \normal
           = ( \normal \times \uvec) \times \normal \in L^p_t(\partial \Omega).
$$
Therefore, $\gamma_0(\xi) \in W^{1,p}(\partial \Omega)$. Recalling Theorem \ref{thm:Crouzeix}, 
we conclude that $\xi$ can be approximated by functions in $C^{\infty}(\overline{\Omega})$. Say that $\xi_n \in C^{\infty}(\overline{\Omega})$
is a sequence converging to $\xi$ in the graph space $G$, previously defined in the statement of Theorem \ref{thm:Crouzeix}. Then the convergence
inside the domain gives us that
\begin{align*}
        \nabla \xi_n & \to \curl \mathbf{A} \text{ in } \Lp^3, \\
        \curl \nabla \xi_n & \to \curl (\curl \mathbf{A}) \text{ in } \Lp^3,
\end{align*}
while the convergence along the boundary guarantees that 
$$
       \normal \times \nabla \xi_n \to \normal \times (\curl \mathbf{A}) \text{ in } L^p_t(\partial \Omega) .
$$
According to the definition of the norm over $\Wimp$, the three previous limits imply that 
$\nabla \xi_n \to \curl \mathbf{A} $ in $\widetilde{W}^p_{\text{imp}}(\text{curl};\Omega)$.
\end{proof}

\begin{theorem}[Helmholtz decomposition]   \label{thm:Helmholtz}
Consider a domain $\Omega$ with a smooth $C^1$ boundary and an exponent $2 \leq p <\infty$. 
Then $W^p_0(\text{\emph{curl}};\Omega)$ has the following direct sum decomposition
$$
    W^p_0(\text{\emph{curl}};\Omega) = X_0 \oplus  \nabla W^{1,p}_0(\Omega), 
$$ 
where 
$$
   X_0 =\left\{  \uvec \in W^p_0(\text{\emph{curl}};\Omega)  \,  \Big|  \,  ( \uvec, \nabla \phi) \text{ for all $\phi \in W^{1,p}_0(\Omega)$}  \right\}.
$$ 
\end{theorem}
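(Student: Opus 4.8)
The strategy is to produce the splitting by solving a single auxiliary Dirichlet problem for the Laplacian, so that, in contrast to the density results above, no covering or localization argument is needed. Given $\uvec \in W^p_0(\text{curl};\Omega)$, I would first observe that its distributional divergence, $\langle \Div \uvec, \psi \rangle := -(\uvec, \nabla \psi)$, satisfies $|\langle \Div \uvec, \psi\rangle| \le \|\uvec\|_{\Lp}\,\|\psi\|_{\Sobolev{1}{q}}$ by H\"older's inequality, so that $\Div \uvec \in \Sobolev{-1}{p} = (W^{1,q}_0(\Omega))'$ with norm controlled by $\|\uvec\|_{\Lp}$. Applying Theorem \ref{thm:elliptic2} with right-hand side $-\Div \uvec$ and homogeneous boundary data $\mu = 0 \in \Besov{1-1/p}{p}{p}$ then yields a unique $\phi \in \Sobolev{1}{p}$, necessarily in $W^{1,p}_0(\Omega)$ since its trace vanishes, satisfying
$$
    (\nabla \phi, \nabla \psi) = (\uvec, \nabla \psi), \qquad \forall \psi \in W^{1,q}_0(\Omega),
$$
together with $\|\phi\|_{\Sobolev{1}{p}} \le C\,\|\Div \uvec\|_{\Sobolev{-1}{p}} \le C\,\|\uvec\|_{\Lp}$. (Equivalently one could verify continuity, non-degeneracy and an inf-sup condition for $B(\phi,\psi)=(\nabla\phi,\nabla\psi)$ on $W^{1,p}_0(\Omega)\times W^{1,q}_0(\Omega)$ and invoke Theorem \ref{thm:Lax-Milgram}, but the $\Lp$ inf-sup for $-\Delta$ is itself the content of Theorem \ref{thm:elliptic2}.) I then set $\uvec_0 := \uvec - \nabla \phi$ and claim that $\uvec = \uvec_0 + \nabla \phi$ is the asserted decomposition.

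Next I would verify that each summand lies where claimed. The field $\nabla \phi$ belongs to $\nabla W^{1,p}_0(\Omega)$ by construction, and it also lies in $W^p_0(\text{curl};\Omega)$: choosing $\phi_n \in C^{\infty}_0(\Omega)$ with $\phi_n \to \phi$ in $\Sobolev{1}{p}$, one has $\nabla \phi_n \to \nabla \phi$ in $\Lp^3$ while $\curl \nabla \phi_n = 0 = \curl \nabla \phi$, so $\nabla \phi_n \to \nabla \phi$ in the norm of $W^p(\text{curl};\Omega)$ with each $\nabla \phi_n \in C^{\infty}_0(\Omega)^3$. Consequently $\uvec_0 = \uvec - \nabla \phi \in W^p_0(\text{curl};\Omega)$ as well. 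To see that $\uvec_0 \in X_0$ I would invoke the hypothesis $p \ge 2$: since $\Omega$ is bounded, $W^{1,p}_0(\Omega) \hookrightarrow W^{1,q}_0(\Omega)$, so the identity above applies to every test function $\psi \in W^{1,p}_0(\Omega)$ and gives $(\uvec_0, \nabla \psi) = (\uvec, \nabla \psi) - (\nabla \phi, \nabla \psi) = 0$; note, too, that $p \ge 2$ is exactly what makes this pairing finite to begin with, as then $\uvec_0 \cdot \nabla \psi \in L^{p/2}(\Omega) \subset L^1(\Omega)$.

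Finally the sum is a topological direct sum. For directness, if $\uvec = \nabla \phi$ with $\phi \in W^{1,p}_0(\Omega)$ also lies in $X_0$, then taking $\psi = \phi$ in the defining relation of $X_0$ gives $\int_{\Omega} |\nabla \phi|^2 \, d\xvec = 0$ (the integral is finite since $\nabla\phi \in \Lp^3 \subset L^2(\Omega)^3$), hence $\nabla \phi = 0$ a.e.\ and, by its vanishing trace, $\phi = 0$; thus $X_0 \cap \nabla W^{1,p}_0(\Omega) = \{0\}$. Boundedness of the two projections follows from the elliptic estimate: $\|\nabla \phi\|_{W^p(\text{curl};\Omega)} = \|\nabla \phi\|_{\Lp} \le C\,\|\uvec\|_{\Lp} \le C\,\|\uvec\|_{W^p(\text{curl};\Omega)}$, whence also $\|\uvec_0\|_{W^p(\text{curl};\Omega)} \le (1+C)\,\|\uvec\|_{W^p(\text{curl};\Omega)}$. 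The one genuinely delicate point — the step I would watch most carefully — is the mismatch between the exponent $p$ appearing in the definition of $X_0$ and the conjugate exponent $q$ that is forced upon the test space of the $\Lp$ Poisson problem; the restriction $p \ge 2$ is precisely what reconciles the two, simultaneously guaranteeing the inclusion $W^{1,p}_0(\Omega) \subset W^{1,q}_0(\Omega)$ and the integrability needed for the pairing that defines $X_0$.
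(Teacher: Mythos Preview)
Your argument is correct and complete. The paper, however, leaves the proof of Theorem~\ref{thm:Helmholtz} entirely blank: the \texttt{proof} environment following the statement contains nothing, so there is no argument in the paper to compare yours against.

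Your route is the natural one and matches the spirit of the surrounding sections: you reduce the decomposition to the $L^p$ Dirichlet problem for the Laplacian (Theorem~\ref{thm:elliptic2}), which plays the same role here that Theorems~\ref{thm:elliptic1} and~\ref{thm:elliptic4} played in the trace constructions of Section~\ref{sec:spaces}. The verification that $\nabla\phi \in W^p_0(\text{curl};\Omega)$ via approximation by $\nabla\phi_n$ with $\phi_n \in C^{\infty}_0(\Omega)$ is clean, and you correctly isolate the one place where the hypothesis $p \ge 2$ is genuinely used: it furnishes the inclusion $W^{1,p}_0(\Omega) \subset W^{1,q}_0(\Omega)$ needed to pass from the variational identity (which is against $W^{1,q}_0$ test functions) to membership in $X_0$ (which is tested against $W^{1,p}_0$), and it simultaneously ensures that the pairing $(\uvec,\nabla\psi)$ defining $X_0$ is an honest Lebesgue integral. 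The directness argument, testing $\nabla\phi \in X_0$ against $\psi = \phi$ itself, again uses $p \ge 2$ to guarantee $\nabla\phi \in L^2(\Omega)^3$ so that $\int_\Omega |\nabla\phi|^2\,d\xvec$ is finite. Nothing is missing.
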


\begin{proof}

\end{proof}

\section{Applications to the analysis of a nonlinear elliptic problem}
\label{sec:applications}

With the results given so far, we are in a position to study the existence and uniqueness of solutions to the problem
\eqref{p-curl}-\eqref{Dirichlet_BC}.  In anticipation of our analysis of the associated homogeneous parabolic problem,
we will consider the problem with a source term $\mathbf{S}$ satisfying $\Div \mathbf{S} = 0$ and $\normal \times \mathbf{S} = 0$
along the boundary $\partial \Omega$.
The elliptic $p$-CurlCurl written in  weak form is therefore : given $\mathbf{S} \in X_0$,  
find $\uvec \in X_0$ such that
\be  \label{stationnary_p-curl}
       \left( |\curl \uvec |^{p-2} \curl \uvec, \curl \vvec  \right)_{\Omega} = \left( \mathbf{S} , \vvec \right)_{\Omega}  , 
         \,\, \forall \vvec \in X_0.
\ee
This suggests we consider the nonlinear differential operator $A: W^{p}(\text{{curl}};\Omega)  \to W^{p}(\text{{curl}};\Omega)' $,
defined by the pairing
$$
       \left( A \uvec, \vvec  \right) = \int_{\Omega} |\curl \uvec|^{p-2} \curl \uvec \cdot \curl \vvec \, d\xvec.
$$

The next two vector inequalities are well-known and demonstrated in \cite{BarLiu94}. In fact, a generalization
can also be found in \cite{LiuYan01}.

\begin{lemma}[\cite{GloMar75,BarLiu94}] For all $p \in (1, \infty)$ and $\delta \geq 0$ there exists positive constants 
$a_1(p,n)$ and $a_2(p,n)$ such that for all $\xi, \eta \in \RR^n$,
\be  \label{ineq_1}
   | |\xi|^{p-2} \xi -  |\eta|^{p-2}\eta | \leq a_1 |\xi - \eta|^{1-\delta} \big( |\xi| + |\eta|\big)^{p-2+\delta},
\ee
and
\be  \label{ineq_2}
  |\xi-\eta|^{2+\delta}\big( |\xi|+|\eta|\big)^{p-2-\delta}    \leq  a_2 \big( |\xi|^{p-2}  \xi - |\eta|^{p-2}\eta \big) \cdot (\xi - \eta) .
\ee
\end{lemma}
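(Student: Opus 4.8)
The plan is to first reduce both inequalities to the single case $\delta=0$ and then to prove that case by an integral representation of the map $F(\xi):=|\xi|^{p-2}\xi$ together with eigenvalue bounds on its Jacobian.

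\emph{Reduction to $\delta=0$.} The elementary bound $|\xi-\eta|\le |\xi|+|\eta|$ lets me trade a power of $|\xi-\eta|$ for a power of $|\xi|+|\eta|$ in either direction. For \eqref{ineq_1} I would start from the $\delta=0$ estimate $|F(\xi)-F(\eta)|\le a_1'\,|\xi-\eta|\,(|\xi|+|\eta|)^{p-2}$ and use the identity $|\xi-\eta|=|\xi-\eta|^{1-\delta}|\xi-\eta|^{\delta}\le |\xi-\eta|^{1-\delta}(|\xi|+|\eta|)^{\delta}$, which produces exactly the factor $(|\xi|+|\eta|)^{p-2+\delta}$. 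For \eqref{ineq_2} the same inequality runs the other way: writing $|\xi-\eta|^{2+\delta}(|\xi|+|\eta|)^{p-2-\delta}=|\xi-\eta|^{2}(|\xi|+|\eta|)^{p-2}\big(|\xi-\eta|/(|\xi|+|\eta|)\big)^{\delta}$ and noting that the last factor is at most $1$, I bound the left side above by $|\xi-\eta|^{2}(|\xi|+|\eta|)^{p-2}$ and then apply the $\delta=0$ monotonicity estimate. Thus it suffices to establish both inequalities for $\delta=0$.

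\emph{The $\delta=0$ estimates.} Excluding the trivial case $\xi=\eta$, I set $\xi_t:=\eta+t(\xi-\eta)$ and write $F(\xi)-F(\eta)=\big(\int_0^1 DF(\xi_t)\,dt\big)(\xi-\eta)$, where a direct computation gives $DF(\zeta)=|\zeta|^{p-2}\big(I+(p-2)\,\zeta\zeta^{\top}/|\zeta|^2\big)$. This symmetric matrix has eigenvalue $(p-1)|\zeta|^{p-2}$ in the direction $\zeta$ and $|\zeta|^{p-2}$ on its orthogonal complement, so with $\lambda_-:=\min(1,p-1)$ and $\lambda_+:=\max(1,p-1)$ one has $\lambda_-|\zeta|^{p-2}|v|^2\le v\cdot DF(\zeta)v$ and $|DF(\zeta)v|\le \lambda_+|\zeta|^{p-2}|v|$ for every $v$. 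Taking $v=\xi-\eta$ and integrating yields $(F(\xi)-F(\eta))\cdot(\xi-\eta)\ge \lambda_-|\xi-\eta|^2\int_0^1|\xi_t|^{p-2}\,dt$ together with $|F(\xi)-F(\eta)|\le \lambda_+|\xi-\eta|\int_0^1|\xi_t|^{p-2}\,dt$. Both $\delta=0$ claims then follow at once from the two-sided bound $m(|\xi|+|\eta|)^{p-2}\le \int_0^1|\xi_t|^{p-2}\,dt\le M(|\xi|+|\eta|)^{p-2}$, for constants $0<m\le M$ depending only on $p$.

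To prove this last bound I would exploit homogeneity to normalise $|\xi|+|\eta|=1$ (both sides scale like $(|\xi|+|\eta|)^{p-2}$), reducing it to showing that $\Phi(\xi,\eta):=\int_0^1|t\xi+(1-t)\eta|^{p-2}\,dt$ has a strictly positive minimum and a finite maximum on the compact sphere $\{|\xi|+|\eta|=1\}$; since $\Phi>0$ there (the affine segment hits the origin at most at one parameter), it suffices to prove $\Phi$ continuous. This continuity is the main obstacle: for $2\le p<\infty$ the integrand is bounded and continuity is immediate, but for $1<p<2$ the integrand $|\xi_t|^{p-2}$ blows up wherever the segment meets the origin, and I would need a uniform-integrability (dominated-convergence) argument to control $\Phi$ near the antiparallel configurations $\xi=-\eta$ where this occurs. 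Because our application only requires $2\le p<\infty$ (cf. Theorem \ref{thm:Helmholtz}), this delicate case could be omitted, though I would retain it for completeness. A final minor point is that the integral representation presumes $F\in C^1$ along the segment; for $1<p<2$ this fails only at the isolated parameter where $\xi_t=0$ and is handled by approximating the segment or invoking the absolute continuity of $t\mapsto F(\xi_t)$.
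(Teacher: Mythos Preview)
The paper does not prove this lemma at all; it simply cites \cite{GloMar75,BarLiu94} (the sentence preceding the statement reads ``well-known and demonstrated in \cite{BarLiu94}'') and then uses the inequalities as a black box. Your argument---reduce to $\delta=0$ via the trivial bound $|\xi-\eta|\le|\xi|+|\eta|$, then write $F(\xi)-F(\eta)=\int_0^1 DF(\xi_t)\,dt\,(\xi-\eta)$, exploit the eigenvalue bounds $\min(1,p-1)|\zeta|^{p-2}\le DF(\zeta)\le\max(1,p-1)|\zeta|^{p-2}$, and finally bound $\int_0^1|\xi_t|^{p-2}\,dt$ above and below by multiples of $(|\xi|+|\eta|)^{p-2}$---is precisely the standard proof found in the cited references, and it is correct. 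The one delicate point you isolate, namely the continuity of $\Phi$ (equivalently the finiteness of the upper constant $M$) in the range $1<p<2$, is handled in those sources by a direct change-of-variable estimate isolating the possible zero of $t\mapsto\xi_t$ rather than by a compactness argument; in any case, as you observe, that range of $p$ plays no role in the present paper.
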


\begin{lemma}  \label{lemma:monotone}
Assume that $\Omega$ is bounded domain with a $C^1$ boundary and that $2 \leq p < \infty$.
Then, the operator $A$ is 
\begin{itemize}
\item[(i)] well-defined and hemicontinuous over $X_0$ : for some $C_1$
\begin{equation}
       \| A \uvec \|_{X_0'} \leq C_1 \| \uvec \|_{X_0}^{p-1}, \qquad \forall \uvec \in X_0;    \label{A_continuity} 
\end{equation}
\item[(ii)] strictly monotone over $X_0$ : for some $C_2$
\begin{equation}
 \|  \uvec -  \vvec \|_{X_0}^p
                  \leq C_2 \left(  A \uvec -A \vvec,   \uvec - \vvec  \right)_{\Omega},    \qquad \forall \uvec, \vvec \in X_0;        \label{A_monotone}
\end{equation}
\item[(iii)] stable over $X_0$ : for some $C_3$
\begin{equation}
        \| A \uvec - A \vvec \|_{X_0'}    
              \leq  C_3  \| \uvec - \vvec  \|_{X_0} \left(   \|  \uvec \|_{X_0}  +  \| \vvec \|_{X_0}  \right)^{p-2},    \qquad \forall \uvec, \vvec \in X_0; 
                       \label{A_stable}
\end{equation}
\item[(iv)] coercive over $X_0$,
\begin{equation}
     \big| (A \uvec, \uvec) \big| \to \infty , \text{ if $\|\uvec\|_{X_0} \to \infty$.} \label{A_coercive}
\end{equation}
\end{itemize}
In fact, (i) and (iii) also hold over $W^p_0(\text{curl},\Omega)$.
\end{lemma}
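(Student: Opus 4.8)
The plan is to transcribe to the $\curl$ setting the standard monotone-operator estimates for the $p$-Laplacian, with the pointwise inequalities \eqref{ineq_1}--\eqref{ineq_2} playing the role they play there for $\Grad$, and to invoke Friedrich's inequality, Theorem \ref{Friedrich_inequality1}, whenever one needs to pass from control of $\|\curl\uvec\|_{\Lp^3}$ to control of $\|\uvec\|_{X_0}$. The first step is the preliminary remark that on $X_0$ the map $\uvec\mapsto\|\curl\uvec\|_{\Lp^3}$ is a norm equivalent to $\|\cdot\|_{X_0}$: if $\uvec\in X_0$ then $\uvec\in W^p_0(\text{curl};\Omega)$, so $\gamma_t(\uvec)=0$ by Lemma \ref{lemma:Wcurl2}, and testing the defining relation $(\uvec,\nabla\phi)=0$ against $\phi\in C^\infty_0(\Omega)$ gives $\Div\uvec=0$; hence $\uvec\in X^p_{N,0}\subset W^p_N$, and Theorem \ref{Friedrich_inequality1}, its boundary term now vanishing, yields $\|\uvec\|_{\Lp^3}\le C\|\curl\uvec\|_{\Lp^3}$ and therefore $\|\uvec\|_{X_0}\le(C+1)\|\curl\uvec\|_{\Lp^3}$, the reverse inequality being trivial.

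For (i), a single application of H\"older's inequality with conjugate exponents $p$ and $q$ gives, for any $\uvec,\vvec\in\Wcurl$, the bound $|(A\uvec,\vvec)|\le\int_\Omega|\curl\uvec|^{p-1}|\curl\vvec|\,d\xvec\le\|\curl\uvec\|_{\Lp^3}^{p-1}\|\curl\vvec\|_{\Lp^3}$, whence $A\uvec$ is a bounded functional with $\|A\uvec\|_{X_0'}\le\|\uvec\|_{X_0}^{p-1}$; since only the trivial direction of the norm inequality was used, the same estimate holds over $W^p_0(\text{curl};\Omega)$. Hemicontinuity --- continuity of $t\mapsto(A(\uvec+t\vvec),\phivec)$ on $\RR$ --- then follows from continuity of $\xi\mapsto|\xi|^{p-2}\xi$ and the dominated convergence theorem, a dominating function on any bounded interval of $t$ being furnished by \eqref{ineq_1}.

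For (iii), I would use \eqref{ineq_1} with $\delta=0$ to bound the integrand of $(A\uvec-A\vvec,\phivec)$ pointwise by $a_1\,|\curl(\uvec-\vvec)|\,(|\curl\uvec|+|\curl\vvec|)^{p-2}\,|\curl\phivec|$ and then apply H\"older with the three exponents $p$, $p/(p-2)$ and $p$, which are conjugate since their reciprocals sum to one (the middle factor being constant when $p=2$, in which case one uses the ordinary two-exponent inequality). Since $\|(|\curl\uvec|+|\curl\vvec|)^{p-2}\|_{L^{p/(p-2)}(\Omega)}=\||\curl\uvec|+|\curl\vvec|\|_{\Lp}^{p-2}\le(\|\uvec\|_{X_0}+\|\vvec\|_{X_0})^{p-2}$, this produces \eqref{A_stable}, again with no appeal to Friedrich and hence over $W^p_0(\text{curl};\Omega)$ as well. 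For (ii), I would instead use \eqref{ineq_2} with $\delta=0$ together with the elementary bound $(|\xi|+|\eta|)^{p-2}\ge|\xi-\eta|^{p-2}$ --- valid because $p\ge 2$ --- so that the integrand of $(A\uvec-A\vvec,\uvec-\vvec)$ is pointwise at least $a_2^{-1}|\curl(\uvec-\vvec)|^p$; integrating gives $\|\curl(\uvec-\vvec)\|_{\Lp^3}^p\le a_2(A\uvec-A\vvec,\uvec-\vvec)$, and the preliminary norm equivalence turns the left-hand side into a constant multiple of $\|\uvec-\vvec\|_{X_0}^p$, which is \eqref{A_monotone}.

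Finally, (iv) is immediate once one notes that $(A\uvec,\uvec)=\|\curl\uvec\|_{\Lp^3}^p$, which by Friedrich's inequality dominates $C^{-p}\|\uvec\|_{X_0}^p$ and therefore tends to infinity with $\|\uvec\|_{X_0}$. All the computations are routine once the pointwise inequalities are granted; the one genuinely structural ingredient is Friedrich's inequality, which is precisely what makes $\|\curl\cdot\|_{\Lp^3}$ an equivalent norm on $X_0$ and is therefore needed for (ii) and (iv) but not for (i) and (iii). I expect the only points demanding care to be the verification that every element of $X_0$ meets the hypotheses of Theorem \ref{Friedrich_inequality1} --- so that $\|\curl\cdot\|_{\Lp^3}$ does control $\|\cdot\|_{X_0}$ --- and keeping the H\"older exponents, together with the degenerate case $p=2$, straight in part (iii).
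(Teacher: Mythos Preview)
Your proposal is correct and follows essentially the same approach as the paper: the same pointwise inequalities \eqref{ineq_1}--\eqref{ineq_2} are used for the same parts, and Friedrich's inequality is invoked in the same places, namely (ii) and (iv). The only cosmetic differences are that for (ii) the paper takes $\delta = p-2$ in \eqref{ineq_2} to get $|\xi-\eta|^p$ directly rather than your $\delta=0$ plus the triangle-inequality step, and for (iii) the paper first separates $\curl\phivec$ by a two-exponent H\"older and then applies a second H\"older inside the remaining $L^q$ norm, whereas your single three-exponent H\"older is a cleaner equivalent.
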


\begin{proof}
To prove that $A$ is well-defined and hemicontinuous, it suffices to observe that 
\begin{align*}
  \big| (A \uvec, \vvec )\big| \leq &  \int_{\Omega} |\curl \uvec|^{p-1} |\curl \vvec| \, d\xvec  \\
         \leq & \| |\curl \uvec|^{p-1}\|_{L^q(\Omega)^3}  \cdot \| \curl \vvec \|_{\Lp^3}  \\
           = &   \big( \| \curl \uvec \|_{\Lp^3} \big)^{p-1} \cdot \| \curl \vvec \|_{\Lp^3} \leq  \big( \|\uvec \|_{\Wcurl} \big)^{p-1} \| \vvec \|_{\Wcurl} .
\end{align*} 
According to the definition of the dual norm, \eqref{dual_norm}, this implies \eqref{A_continuity}.
To prove \eqref{A_monotone}, we use \eqref{ineq_2} with $\delta = p-2 \geq 0$ and Friedrich's inequality to write
\begin{align*}
   \big(  A \uvec - A \vvec, \uvec - \vvec  \big) = &  \int_{\Omega} \Big[   |\curl \uvec |^{p-2} \curl \uvec - |\curl \vvec|^{p-2} \curl \vvec  \Big] \cdot \curl (\uvec - \vvec) \, d\xvec \\
   \geq & a_2 \int_{\Omega} \big|   \curl \uvec  - \curl \vvec  \big|^p \, d\xvec  = a_2 \| \uvec - \vvec \|^p_{X_0} .
\end{align*} 
To prove \eqref{A_stable}, we begin with the application of H\"{o}lder's inequality 
\begin{align*}
     \| A \uvec  - A  \vvec  & \|_{\Wcurl'}   
              =  \sup_{\phivec \in \Wcurl } \frac{( A \uvec - A \vvec , \phivec ) }{ \| \phivec \|_{\Wcurl} } \\
              = &  \sup_{ \phivec } \frac{1}{ \| \phivec \| } \Bigg|  \int_{\Omega} \Big[ |\curl \uvec|^{p-2} \curl \uvec - |\curl \vvec|^{p-2} \curl \vvec \Big]   
                          \cdot \curl \phivec \, d\xvec \Bigg| \\
          \leq &  \sup_{ \phivec } \frac{1}{ \|\phivec \| } \Bigg|  \int_{\Omega} \left| |\curl \uvec|^{p-2} \curl \uvec - |\curl \vvec|^{p-2} \curl \vvec \right|^{q}
                  \, d\xvec \Bigg|^{1/q} \cdot \| \curl \phivec \|_{\Lp^3} \\
          \leq &  \Bigg|  \int_{\Omega} \left| |\curl \uvec|^{p-2} \curl \uvec - |\curl \vvec|^{p-2} \curl \vvec \right|^{q}
                  \, d\xvec \Bigg|^{1/q} \\                    
\end{align*}
At this point, we recall that $q = p/(p-1)$,  we exploit \eqref{ineq_1} with $\delta = 0$ and then follow up with  another application of H\"{o}lder's inequality
\begin{align*}
     \| A \uvec - A \vvec \|_{\Wcurl'}   \leq & \, a_1 \left\{  \int_{\Omega} \left| \curl \uvec - \curl \vvec \right|^{q} 
                                                                            \cdot \big( |\curl \uvec| + |\curl \vvec| \big)^{(p-2) q}\right\}^{1/q} \\
                  \leq & \, a_1 \Bigg\{        \bigg\{ \int_{\Omega}     \big| \curl \uvec - \curl \vvec \big|^{ \frac{p}{p-1} \cdot (p-1) } \, d\xvec         \bigg\}^{\frac{1}{p-1}}  \\
                          & \phantom{\, C \Bigg\{}
                                           \times   \bigg\{  \int_{\Omega} \big( |\curl \uvec| + 
                                                                                                         |\curl \vvec| \big)^{ (p-2) \cdot \frac{p}{p-1} \cdot \frac{p-1}{p-2} }   \bigg\}^{\frac{p-2}{p-1}}
                                     \Bigg\}^{\frac{p-1}{p}} \\
                  \leq & \, a_1 \| \uvec  - \vvec \|_{X_0}^{\frac{p}{p-1}\frac{p-1}{p}} \times \| |\curl \uvec| + |\curl \vvec| \|_{X_0}^{p \frac{p-2}{p-1}  \frac{p-1}{p}} \\
                  \leq & \, a_1 \| \uvec - \vvec \|_{\Wcurl} \cdot \Big(  \| \uvec \|_{\Wcurl} + \| \vvec \|_{\Wcurl} \Big)^{p-2}         .           
\end{align*}
Finally, Friedrich's inequality implies \eqref{A_coercive} because of the identity
$$
  \big| (A \uvec, \uvec) \big| = \int_{\Omega} | \curl \uvec |^p \, d\xvec = \| \curl \uvec \|_{\Lp^3}^p.
$$
\end{proof}

\begin{theorem}  \label{thm:stationnary1}
Assume that $\Omega$ is bounded domain with a $C^1$ boundary and that $2 \leq p < \infty$.
For any $\mathbf{S} \in X_0$, the problem
\eqref{p-curl}-\eqref{Dirichlet_BC} has a unique weak solution $\uvec \in X_0$. 
Moreover, the inverse operator $A^{-1}$ is continuous.
\end{theorem}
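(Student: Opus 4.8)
The plan is to read \eqref{stationnary_p-curl} as the single operator equation $A\uvec = \mathbf{S}$ posed in the dual space $X_0'$ and to solve it by the classical surjectivity theorem for monotone operators (the Browder--Minty theorem). First I would record the two structural facts that make that theorem applicable. On the one hand, $X_0$ is a reflexive Banach space: it is a closed subspace of $\Wcurl$ (both defining conditions, $\gamma_t(\uvec)=0$ and $(\uvec,\nabla\phi)=0$ for all $\phi\in W^{1,p}_0(\Omega)$, are closed constraints), and $\Wcurl$ is reflexive by Lemma \ref{lemma:reflexivity}, so a closed subspace of it is reflexive as well. On the other hand, the right-hand side genuinely defines an element of $X_0'$: since $\Omega$ is bounded and $2\le p<\infty$ forces $q=p/(p-1)\le p$, we have $\mathbf{S}\in L^p(\Omega)^3\subset L^q(\Omega)^3$, hence $\vvec\mapsto(\mathbf{S},\vvec)$ is bounded on $L^p(\Omega)^3$, and a fortiori on $X_0$ (using Friedrich's inequality, Theorem \ref{Friedrich_inequality1}, if one measures $X_0$ by $\|\curl\,\cdot\,\|_{\Lp}$).

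With these preliminaries in place, Lemma \ref{lemma:monotone} supplies exactly the hypotheses one needs: $A:X_0\to X_0'$ is hemicontinuous --- indeed locally Lipschitz by the stability estimate \eqref{A_stable} --- it is strictly monotone by \eqref{A_monotone}, and it is coercive, since the identity $(A\uvec,\uvec)=\|\curl\uvec\|_{\Lp^3}^p$ behind \eqref{A_coercive} gives $(A\uvec,\uvec)/\|\uvec\|_{X_0}=\|\uvec\|_{X_0}^{p-1}\to\infty$. The Browder--Minty theorem then yields that $A$ maps $X_0$ onto $X_0'$, so a solution $\uvec\in X_0$ of $A\uvec=\mathbf{S}$ exists. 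Uniqueness is immediate from strict monotonicity: if $A\uvec=A\vvec$ then $0=(A\uvec-A\vvec,\uvec-\vvec)\ge C_2^{-1}\|\uvec-\vvec\|_{X_0}^p$, so $\uvec=\vvec$. Thus \eqref{p-curl}--\eqref{Dirichlet_BC} has exactly one weak solution in $X_0$.

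For the continuity of $A^{-1}$ I would argue quantitatively. Given $\mathbf{S}_1,\mathbf{S}_2\in X_0'$ with solutions $\uvec_1=A^{-1}\mathbf{S}_1$ and $\uvec_2=A^{-1}\mathbf{S}_2$, combining \eqref{A_monotone} with the duality pairing bound gives
\begin{equation*}
  C_2^{-1}\,\|\uvec_1-\uvec_2\|_{X_0}^p \;\le\; (A\uvec_1-A\uvec_2,\,\uvec_1-\uvec_2)\;\le\;\|\mathbf{S}_1-\mathbf{S}_2\|_{X_0'}\,\|\uvec_1-\uvec_2\|_{X_0},
\end{equation*}
whence $\|A^{-1}\mathbf{S}_1-A^{-1}\mathbf{S}_2\|_{X_0}\le\big(C_2\,\|\mathbf{S}_1-\mathbf{S}_2\|_{X_0'}\big)^{1/(p-1)}$; in particular $A^{-1}$ is H\"older continuous of exponent $1/(p-1)$, hence continuous.

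The part requiring the most care will not be the monotone-operator machinery, which is a direct citation, but the bookkeeping that links the abstract equation to the stated boundary value problem: one must verify that testing against $X_0$ only (rather than all of $\Wcurl$) still encodes $\curl(|\curl\uvec|^{p-2}\curl\uvec)=\mathbf{S}$ distributionally together with the constraints $\normal\times\uvec=0$ and $\Div\uvec=0$. This is where the Helmholtz decomposition of Theorem \ref{thm:Helmholtz} and the Green's formulas of Section \ref{sec:spaces} enter: the gradient part $\nabla W^{1,p}_0(\Omega)$ is annihilated by the equation because $\mathbf{S}\in X_0$, so restricting the test space to $X_0$ loses no information. Everything else reduces to quoting Lemma \ref{lemma:monotone}.
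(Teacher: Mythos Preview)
Your proposal is correct and follows essentially the same route as the paper: verify $\mathbf{S}\in X_0'$ using $p\ge 2$, invoke the surjectivity theorem for hemicontinuous, monotone, coercive operators (the paper cites Theorem~12.1 in \cite{Die10}, which is Browder--Minty) via Lemma~\ref{lemma:monotone}, and deduce uniqueness from strict monotonicity. Your additions---the explicit reflexivity check, the quantitative H\"older estimate for $A^{-1}$, and the remark on equivalence with the original boundary value problem---are useful elaborations but do not change the argument.
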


\begin{proof}
Note that $S \in X_0'$ because $p \geq 2$. Lemma \ref{lemma:monotone} states that  $A$ satisfies all the conditions 
Theorem 12.1 from \cite{Die10}, and therefore $A$ is onto $X_0'$. Strict monotonicity implies that the solution is unique.
The existence of a smooth inverse follows from the remarks following Theorem 12.1 in \cite{Die10}.
\end{proof}

%
%
\bibliographystyle{plain}

\end{document}